\newcommand{\CH}{\mathcal{H}}
\newcommand{\CalW}{\mathcal{W}}
\newcommand{\CalA}{\mathcal{A}}
\newcommand{\CalL}{\mathcal{L}}
\newcommand{\CalO}{\mathcal{O}}
\newcommand{\eps}{{\varepsilon}}
\newcommand{\C}{{\mathbb C}}
\newcommand{\RR}{{\mathbb R}}
\newcommand{\NN}{{\mathbb N}}
\newcommand{\ZZ}{{\mathbb Z}}
\newcommand{\TT}{{\mathbb T}}
\newcommand{\CC}{{\mathbb C}}
\newcommand{\EE }{{\mathbb E}}
\newcommand{\II}{{\mathbb I}}
\newcommand{\DD}{{\mathbb D}}
\newcommand{\QQ}{{\mathbb Q}}
\newcommand{\cA}{{\CalA}}
\newcommand\cD{{\mathcal  D}}
\newcommand\cO{{\CalO}}
\newcommand{\Ker}{\text{Ker}}
\newcommand{\at}{{\alpha_t }}
\renewcommand{\ar}{{\alpha_r }}
\newcommand{\ccU}{\check U}
\newcommand{\uG}{\underline G}
\newcommand{\chapter}{\part}
\def\eps{\epsilon }
\def\D{\partial }
\newcommand\adots{\mathinner{\mkern2mu\raise1pt\hbox{.}
\mkern3mu\raise4pt\hbox{.}\mkern1mu\raise7pt\hbox{.}}}
\newtheorem{thm}{Theorem}[section]
\newtheorem{prop}[thm]{Proposition}
\newtheorem{lem}[thm]{Lemma}
\newtheorem{assump}[thm]{Assumption}
\theoremstyle{definition}
\newtheorem{defn}[thm]{Definition}
\newtheorem{notn}[thm]{Notation}
\theoremstyle{remark}
\newtheorem{rem}[thm]{Remark}
\renewenvironment{align}{
    \begin{equation}
    \begin{aligned}
	}
	{
    \end{aligned}
    \end{equation}
    \ignorespacesafterend
}
\let\c@equation\c@thm
\numberwithin{equation}{section}
\title[Hyperbolic Boundary Value Problems]{First Order Hyperbolic Boundary Value Problems with a Large Oscillatory Zero Order Term}
\author{Zhaodh, Alvis}
\date{\today}
\begin{document}

\begin{titlepage}
	\centering
	{\huge\bfseries First Order Hyperbolic Boundary Value Problems with a Large Oscillatory Zero Order Term\par}
	\vspace{2cm}

	{\Large\itshape Zhaodh, Alvis \par}
		\vspace{1cm}
		{\scshape University of North Carolina at Chapel Hill \par}

	{\scshape Senior Honors Thesis (Department of Mathematics)\par}
			\vspace{1.5cm}
			{\itshape May 4th, 2021 \par}

	\vfill

	\begin{equation*}
	\begin{aligned}
	\hspace{4in} \text{Approved by}&\\
	&\text{Williams, Mark (Thesis Advisor)}\\
	&\text{Taylor, Michael (Reader)}\\
	&\text{Metcalfe, Jason (Reader)}
	\end{aligned}
	\end{equation*}

	\vfill
	
This project was supported by the Wilson Family Honors Excellence Fund administered by Honors Carolina.
\end{titlepage}

\begin{abstract}
We study the weakly stable hyperbolic boundary value problem with a large zero order oscillatory coefficient. This problem is related to linearized problems in the study of Mach stem and vortex sheets.  We wish to establish a uniform energy estimate with respect to $\eps$, which is needed in mentioned applications and justification of geometric optics solutions,  but the zero order oscillatory term gives rise to great obstacles.  In this paper we obtain positive results in the small/medium frequency region by adapting the approach of \cite{W4} to a more general situation than the one treated there.  We also show that it is possible to construct high order approximate solutions by the method of geometric optics for those systems without any restriction on frequencies.
\end{abstract}

\maketitle
\enlargethispage{5\baselineskip}
\tableofcontents
\newpage

\section{Introduction}
Consider the following linear hyperbolic system on $\Omega = \RR \times \{(x_1,x_2):x_2 \geq 0\}$: 

\begin{equation} \label{eqn:1system}
	\begin{aligned}
&L(\partial)u + \mathcal{D}\left(\frac{\phi_{in}}{\eps}\right)u:= \partial_tu + B_1\partial_{x_1}u + B_2\partial_{x_2}u + \mathcal{D}\left(\frac{\phi_{in}}{\eps}\right)u = F(t,x,\frac{\phi_0}{\eps}) \text{ in } x_2>0 \\ 
&Bu := \epsilon G \left( t,x_1,\frac{\phi_0}{\epsilon} \right) \text{ on }x_2 = 0 \\ 
&u = 0 \text{ in } t<0 
	\end{aligned}
\end{equation}
where all $B_j$'s are $N \times N$ matrices, $B_2$ is invertible, $\mathcal{D}(\theta_{in}),F(t,x,\theta),G(t,x,\theta)$ is of period $2\pi$  in $\theta_{in}$, and $\theta$ receptively. We note that the oscillatory coefficient  $\mathcal{D}\left(\frac{\phi_{in}}{\eps}\right)$ is large, $O(1)$, compared to the small wave length $\eps \in (0,\eps_0]$.  The problem is weakly stable, that is $(L(\D),B)$ fails to satisfy the uniform Lopatinski condition (Definition \ref{ulcdefn}) in a specific way (Assumption \ref{lopassump}). The boundary matrix $B$ is a constant $p \times N$ matrix with appropriate rank $p$. The boundary phase is $\phi_0(t,x_1) = \beta_l \cdot (t,x_1)$, where $\beta_l = (\sigma_l,\eta_l) \in \RR^2 \setminus 0$ is a direction where uniform Lopatinski condition fails. The interior phase:
\begin{equation}
\phi_{in}(t,x) = \phi_0(t,x_1) + \omega_N(\beta_l)x_2 
\end{equation}
is incoming (Definition \ref{defnincoming}).

 As a warm-up we first establish some techniques in the case (section \ref{section1}) where uniform Lopatinski condition defined in \ref{ulcdefn} holds. For simplicity, we take the oscillatory coefficient to be $e^{i\theta_3}M$ in that section,  where $M$ is a constant $N \times N$ matrix. We construct an approximate WKB solutions (or geometric optics solutions) (\ref{eqn:profile}) and use an existing estimate in \cite{Kre70} to justify this solution by showing that it  is close to the exact solution in $L^\infty$. 

 The main goal of this paper is to study the situation where uniform Lopatinski condition fails in a certain way (\ref{lopassump}) (we say the problem is weakly stable), the oscillatory coefficient has both positive and negative Fourier spectrum, and there are $p \geq 2$ incoming modes ($\phi_N-p+1,... ,\phi_N$) as in definition \ref{defnincoming}.  In this case, methods that have successfully established a uniform estimate for other hyperbolic systems fails to a yield uniform estimate for \eqref{eqn:1system}. Here we refer to methods developed for  (a) Problems where ULC holds \cite{Kre70}, \cite{CGW11}; (b)weakly stable problems with non-oscillatory coefficients \cite{Cou04}, \cite{Cou05}, \cite{CS04}; and (c)weakly stable problems where oscillatory coefficient is small, $O(\eps)$ \cite{CGW14}. 

To further see the difficulties of \eqref{eqn:1system}, we note that when ULC fails, methods established in previous papers mentioned above give an estimate to the system $(L(\D),B)$ (no oscillatory term) in the form:
\begin{equation} \label{lodrtv}
|u|_{L^2} \leq C_T(|L(\D)u|_{H^1}+<Bu>_{H^1})
\end{equation} 
If the oscillatory term is $\eps \mathcal{D}$, \cite{CG10} shows that \eqref{lodrtv} still applies with a constant $C_T$ independent of $\eps\in (0,1]$ when $L(\partial)$ is replaced by $L(\partial)+\eps\mathcal{D}(\phi_{in}/\eps)$.
In \eqref{eqn:1system} the oscillatory term is large (not uniformly Lipschitzean with respect to $\eps\in (0,1]$), so the argument of \cite{CG10} breaks down and fails to establish an estimate like \eqref{lodrtv} when $L(\partial)$ is replaced by $L(\partial)+\mathcal{D}(\phi_{in}/\eps)$.   Thus, new methods are needed to obtain estimates for  \eqref{eqn:1system}.

A different method established in \cite{W4} gives an energy estimate for \eqref{eqn:1system}, which  works if either: (1) the oscillatory term has only positive Fourier spectrum;  or  (2) the oscillatory term has both positive and negative Fourier spectrum and there is only one incoming phase. In particular, theorem 2.12 in \cite{W4} can't be applied to the system \eqref{eqn:1system} when there are at least two incoming phases.

This paper mainly deals with the case when  \eqref{eqn:1system} satisfies the following conditions: (1). The oscillatory term has both positive and negative Fourier spectrum; (2). There are at least two incoming phases,; (3). There is only one direction $\beta_l$ where ULC fails; (4). There is no resonance as defined in definition \ref{defnres}.  The first main result of this thesis is the energy estimate of theorem \ref{mainthm} for systems of the form  \eqref{eqn:1system} satisfying these four conditions.  This estimate involves a restriction to ``the small/medium frequency region" that we clarify later.  The second main result is  the construction of high order approximate solutions for a particular system like  \eqref{eqn:1system}, namely \eqref{eqn:msystem} which satisfies these four conditions.  This result is stated in Theorem \ref{apsothm}.   This theorem is interesting because it shows that high order approximate solutions can be constructed even in situations where we don't have an unrestricted energy estimate. However, due to the lack of uniform existence results for the system \eqref{eqn:1system} and \eqref{eqn:msystem}, we cannot justify the geometric optics solutions.

\subsection{Singular System and Estimations}
We here establish the basic setup for studying the problem. Steps between \eqref{i5} and \eqref{iteintro} is carried out by Williams \cite{W4}.
We first rewrite \eqref{eqn:1system} by applying $ B_2^{-1}$ to both sides of the first equation, for a different $F$ we have:
\begin{align}\label{i5}
\begin{split}
&D_{x_2} u+A_0 D_tu+A_1D_{x_1}u-iB_2^{-1}\mathcal{D}\left(\frac{\phi_{in}}{\eps}\right)u=F(t,x,\frac{\phi_0}{\eps})\\
&Bu= G(t,x_1,\frac{\phi_0}{\eps})\text{ on }x_2=0\\
&u=0\text{ in }t<0,
\end{split}
\end{align}

where $D_{x_i} = \frac{1}{i}\D_{x_i}$, $A_0 = B_2^{-1}$, $A_1 = B_2^{-1}B_1$.  We look for solutions of the form $u(t,x)=U(t,x,\frac{\phi_0}{\eps})$ for \eqref{i5}, where $U(t,x,\theta)$ is periodic in $\theta$.  This benefits us by providing bounded sobolev norms of the right hand side.  Plugging $U(t,x,\theta)$ into \eqref{i5} yields the corresponding  \textit{singular system}:
\begin{align}\label{i6}
\begin{split}
&D_{x_2}U+A_0(D_t+\frac{\sigma_l}{\eps}D_\theta)U+A_1(D_{x_1}+\frac{\eta_l}{\eps}D_\theta)U-iB_2^{-1}\cD\left(\frac{\omega_N(\beta_l)}{\eps}x_2+\theta\right)U=F(t,x,\theta)\\
&BU=G(t,x_1,\theta)\text{ on }x_2=0\\
&U=0\text{ in }t<0.
\end{split}
\end{align}
We observe that a solution of the singular system immediately yields a solution to the original system. 

It is shown in \cite{W4} that we can reduce the first equation of \eqref{i6} to:
\begin{align} \label{1.7}
D_{x_2}U+A_0(D_t+\frac{\sigma_l}{\eps}D_\theta)U+A_1(D_{x_1}+\frac{\eta_l}{\eps}D_\theta)U-i\left(\sum_{r\in\ZZ\setminus 0} \alpha_r e^{i\left(r\frac{\omega_N(\beta_l)}{\eps}x_2+r\theta\right)}\right)B_2^{-1}MU=F(t,x,\theta).
\end{align}
where $M$ is any constant $N\times N$ matrix.

We want to study the Laplace-Fourier transform in $(t,x_1, \theta)$ of the singular system \eqref{i6}, with reduction \eqref{1.7}.  
We can write 
\begin{align} 
U(t,x,\theta)&=\sum_{k\in\ZZ}{U_k}(t,x)e^{ik\theta},\quad
F(t,x,\theta)=\sum_{k\in\ZZ}{F_k}(t,x)e^{ik\theta},\quad
G(t,x_1,\theta)=\sum_{k\in\ZZ}{G_k}(t,x_1)e^{ik\theta}.
\end{align} 
Define $\zeta$, the dual variable of $(t,x_1)$:
\begin{align}\label{i8}
\zeta:=(\tau,\eta):=(\sigma-i\gamma,\eta), \text{ where }(\sigma,\eta)\in\RR^2,\; \gamma\geq 0.
\end{align}
Let $V_k(x_2,\zeta):=\widehat{U_k}(\zeta,x_2)$, the Laplace-Fourier transform of $U_k$.  We define:
\begin{align} \label{s3b}
X_k:=\zeta+\frac{k\beta_l}{\eps}\text{ and }\cA(\zeta)=-(A_0\tau+A_1\eta),
\end{align}
Using above definitions, equations \eqref{i6} and \eqref{1.7}, we have, for each $k$, a corersponding singular system for $V_k$:
\begin{align}\label{i9}
\begin{split}
&D_{x_2}V_k-\mathcal{A}(X_k)V_k=i\sum_{r\in\ZZ\setminus 0}  \alpha_r e^{ir\frac{\omega_N(\beta_l)}{\eps}x_2}B_2^{-1}MV_{k-r}+\widehat{F_k}(x_2,\zeta)\\
&BV_k=\widehat{G_k}(\zeta)\text{ on }x_2=0.
\end{split}
\end{align}
With the help of proposition \ref{finiter},  we obtain an iteration estimate \footnote{The name \textit{iteration estimate} goes back to \cite{W4}, but we don't iterate this estimate here.} (proposition \ref{tt30}) of the following form.  For  $\gamma \geq \gamma_0 > 0$, we have:  
\begin{equation} \label{iteintro}
||\chi V_k||\leq \frac{C}{\gamma}\sum_{r\in\ZZ\setminus 0}\sum_{t\in \ZZ}||\alpha_r\alpha_t\DD(\eps,k,k-r)\chi V_{k-r-t}||+\frac{C}{\gamma^2}\left|\chi\widehat{F_k}|X_k|\right|_{L^2}+\frac{C}{\gamma^{3/2}}\left|\chi \widehat{G_k}|X_k|\right|_{L^2}
\end{equation}
where $\chi$ is the characteristic function for $|\zeta|$ small compared to $\eps^{-1}$,  namely the region $|\zeta| \leq C\eps^{\alpha -1}$ for a fixed $0< \alpha <1$, $||\cdot ||$ is a modified $L^2(x_2,\sigma,\eta)$ norm,  the constants $C$ and $\gamma_0$ is independent of $(\eps, \zeta, k)$, and the $\alpha_r$ are as in \eqref{i9}.  The global amplification factor, $\DD$,  is defined in \ref{t29}.  Estimate \eqref{iteintro} is an important step toward proving the uniform estimate of $U(t,x,\theta)$ in \eqref{i6} (theorem \ref{mainthm}):\\
\begin{align} \label{a}
|\chi_D U^\gamma|_{L^2(t,x,\theta)} + \left|\frac{\chi_DU^\gamma (0)}{\sqrt{\gamma}}\right|_{L^2(t,x_1,\theta)} \leq 
K\left[ \frac{1}{\gamma^2}(\sum_{k\in\ZZ}\left| \chi |X_k|\widehat{F_k}\right|^2_{L^2(x_2,\sigma,\eta)})^{1/2} + \frac{1}{\gamma^{3/2}}(\sum_{k\in\ZZ}\left| \chi|X_k|\widehat{G_k}\right|^2_{L^2(\sigma,\eta)})^{1/2}\right]
\end{align}
where $\chi_D$ is the Fourier multiplier corresponding to $\chi$.

\begin{rem}
\begin{enumerate}
\item Techniques of \cite{CGW14} can be used to prove an estimate of the form \eqref{a} where $\gamma_0=\gamma_0(\eps)\to \infty$ as $\eps\to 0$, but such estimates can't be used to justify WKB expansions.  The main result of this paper is a step toward obtaining an estimate with $\gamma_0$ independent of $\eps$.   The analysis also indicates some obstacles to obtaining estimates with no restriction on $\zeta$. 
\item Since we don't have uniform existence results for the system \eqref{i6},   estimates \eqref{iteintro}, \eqref{a} are stated as a priori estimates. 
\item It is interesting that it is still possible to construct arbitrarily high order WKB solutions in this situation.  However, to justify those solutions (showing the WKB solutions are close to the exact solutions),  we need the estimate \eqref{a} without the $\chi$ and the existence of exact solutions. 
\item We can apply estimate \eqref{a} to get an ``restricted" existence result for a particular singular system that is different from \eqref{i6} by a classical duality argument based on energy estimate  for the corresponding ``backward" or adjoint boundary value problem together with the Riesz representation theorem.  This is discussed informally in section \ref{ext}.
\end{enumerate}
\end{rem}

\begin{notn}
Consider functions $f(x), g(x)$ where $x \in D$ for some domain $D$.  
\begin{align}
f \sim 1 &\iff C_1 \leq |f(x)| \leq C_2 \text{ on } D\\
f \lesssim g &\iff |f(x)| \leq C|g(x)| \text{ on } D
\end{align}
for constants $C, C_1,\ C_2$ independent of $x$.
\end{notn}

\subsection{Main steps and difficulties}\hfill \\

Here we briefly  discuss the obstacles   created by the failure of the uniform Lopatinski condition and the presence of two incoming phases.  To analyze the singular system \eqref{i9}, we diagonalize the equation in the neighbourhood of $\beta_l$ where ULC holds and write down explicitly the integral formula of solutions (\eqref{a5}, \eqref{a6}), which is of the form:
\begin{align}\label{a5intro}
w^+_k(x_2,\zeta)=\sum_{r\in\ZZ\setminus 0}\int^\infty_{x_2}e^{i\xi_+(\eps,k)(x_2-s)+ir\frac{\omega_N(\beta_l)}{\eps}s}\ar[a(\eps,k,k-r)w^+_{k-r}(s,\zeta)+b(\eps,k,k-r)w^-_{k-r}(s,\zeta)]ds,
\end{align}
\begin{align}\label{a6intro}
\begin{split}
&w^-_k(x_2,\zeta)=-\sum_{r\in\ZZ\setminus 0}\int^{x_2}_0e^{i\xi_-(\eps,k)(x_2-s)+ir\frac{\omega_N(\beta_l)}{\eps}s}\ar[c(\eps,k,k-r)w^+_{k-r}(s,\zeta)+d(\eps,k,k-r)w^-_{k-r}(s,\zeta)]ds-\\
&e^{i\xi_-(\eps,k)x_2}[Br_-(\eps,k)]^{-1}Br_+(\eps,k)\sum_{r\in\ZZ\setminus 0}\int^\infty_{0}e^{i\xi_+(\eps,k)(-s)+ir\frac{\omega_N(\beta_l)}{\eps}s}\ar[a(\eps,k,k-r)w^+_{k-r}(s,\zeta)+\\
&\qquad\qquad\qquad b(\eps,k,k-r)w^-_{k-r}(s,\zeta)]ds+e^{i\xi_-(\eps,k)x_2}[Br_-(\eps,k)]^{-1}\hat G_k(\zeta).
\end{split}
\end{align}
where $V_k = (w_k^+\ w_k^-)$, $a,\ b,\ c,\ d$ are matrices that $\sim 1$; $\xi_+(\zeta) = \text{diag}\{{\omega_1(\zeta),...,\omega_{N-p}}(\zeta)\}$, $\xi_-(\zeta) =  \text{diag}\{{\omega_{N-p+1}(\zeta),...,\omega_{N}}(\zeta)\}$, where $\omega_j$ are the eigenvalues of the matrix $\mathcal{A}(\zeta)$; $r_+ = (r_1,...r_{N-p}),\ r_- = (r_{N-p+1},...,r_N)$ where $r_j$ are the normalized eigenvector of the matrix $\mathcal{A}(\zeta)$. The indexes $\mathcal{O}= \{1,...,N-p\}$, $\mathcal{I}= \{N-p+1,...,N\}$ corresponds to outgoing, incoming phases respectively.

The failure of uniform Lopatinski condition as specified in assumption \ref{lopassump} implies that the Lopatinski determinant $\Delta(\beta_l) =  0$ (definition \ref{lopdeterm}).  For functions $f$ depending on $\zeta$, we use notation $f(\eps,k)$ to denote $f(X_k)$ to highlight the dependence on $\eps, k$ (notation \ref{notation}).  Lemma \ref{t6} shows that:
\begin{equation}
|Br_-(\eps,k)|^{-1} \lesssim |\Delta(\eps,k)|^{-1} \sim \frac{|X_k|}{|\tau - c_+(\beta_l)\eta|}
\end{equation}
which is of size $\geq \frac{1}{\eps}$ for $\zeta$ near $\beta_l$. This yields an obvious obstacle when estimating the integral equations.  To estimate this term, \cite{W4} proposed the usage of $E_{i,j}(\eps,k,k-r)$ (definition \ref{defneij}):
\begin{equation}
E_{i,j}(\eps,k,k-r;\beta_l):= \omega_i(\eps,k;\beta_l) - \frac{r\omega_N(\beta_l)}{\eps}-\omega_j(\eps,k-r;\beta_l), \text{ where } i\in \mathcal{O}, j\in \mathcal{I}
\end{equation}
Notice that we can rewrite a component of the last line of the integral equations  \eqref{a6intro} as  $e^{-iE_{i,j}(\eps,k,k-r)s}$,  provided we replace $w_{k-r}^-$ by $e^{-i\xi_-(\eps,k-r)x_2}w_{k-r}^-$.  An integration by part argument using 
\begin{equation}
e^{-iE_{i,j}s} = -\frac{1}{i}E_{i,j}^{-1} \frac{d}{ds}e^{-iE_{i,j}s}
\end{equation}
gives rise to the term $E_{i,j}^{-1}$. This core step allows us to control $|\Delta(\eps,k)|^{-1}$ with $|E_{i,j}^{-1}|$ on regions where $E_{i,j}(\eps,k,k-r)$ is large. We'll see that the existence of more than one incoming phases produces more difficulties when establishing a useful lower bound on some regions.  Actually we were able to prove such lower bounds only when $|\zeta| \lesssim \eps^{\alpha-1}$ for some  $0 < \alpha <1$ (proposition \ref{finiter}).

The failure of  the uniform Lopatinski condition also gives rise to more difficulties  in constructing geometric optics solutions. Since the boundary matrix $B$ in \eqref{i5} fails to be an isomorphism between $\EE^s(\zeta)$ and $\CC^p$ where $\EE^s(\zeta)$ is the stable subspace of $i\mathcal{A}(\zeta)$(definition \ref{stablesubspace}),  we need further analysis on the boundary equations using the approach of \cite{W4},  which is presented in section \eqref{abae}.

\newpage
\section{Definitions and Assumptions} \label{sec2}

In this section we establish relevant definitions, assumptions, and results quoted directly from \cite{W4}.

\begin{assump} \label{stricthyperbolicity} \cite{W4} (Strict Hyperbolicity) The $B_j$ are real matrices, and there exist real valued functions $\lambda_j(\eta,\xi), j = 1,...,N$ that are analytic on $\RR^2 \setminus 0$  and homogeneous of degree one such that
\begin{equation}
\det(\sigma I + B_1 \eta + B_2 \xi) = \prod_{k=1}^N(\sigma + \lambda_k(\eta,\xi)) \text{ for all } (\eta,\xi)\in \RR^2 \setminus 0
\end{equation}
Moreover, we have 
\begin{equation}
\lambda_1(\eta,\xi)<\lambda_2 (\eta,\xi) <...<\lambda_N(\eta,\xi) \text{ for all } (\eta,\xi)\in \RR^2\setminus 0
\end{equation}
\end{assump}

\begin{assump}\cite{W4}\label{BNp}
The matrix $B_2$ is invertible and has $p$ positive eigenvalues, where $1\leq p\leq N-1$.  The boundary matrix $B$ is $p\times N$, real,  and of rank $p$. 
\end{assump}

We define the sets of frequencies:
\begin{equation}
	\begin{aligned}
&\Xi := \{\zeta := (\sigma - i\gamma, \eta)\in\CC \times \RR \setminus (0,0): \gamma\geq 0\} \\
&\Xi_0 := \{(\sigma, \eta)\in\RR \times \RR \setminus (0,0)\} = \Xi \cap \{\gamma = 0\} 
\end{aligned}
\end{equation}

\begin{defn}\label{stablesubspace}
Let $K$ be an $n \times n$ matrix with entries in $\CC$.  If $\lambda$ is an eigenvalue of $K$ of algebraic multiplicity $m \in \NN$, the \textbf{generalized eigenspace} associated to $\lambda$ is $G_\lambda := \{x \in \CC^n: (K- \lambda I)^m x = 0\}$. The \textbf{stable subspace} of $K$ is the direct sum of the generalized eigenspaces associated to eigenvalues of $K$ with real part $< 0$.
\end{defn}

\begin{defn} \label{defnhyperbolic} \cite{W4}
\begin{enumerate}
\item The \textbf{hyperbolic region $\mathcal{H}$} is the set of all $(\sigma, \eta) \in \Xi_0$ such that the matrix $\mathcal{A}(\sigma, \eta)$ is diagonalizable with real eigenvalues.
\item Let \textbf{$G$} denote the set of all $(\sigma, \eta, \xi) \in \RR \times \RR^2$ such that $(\eta,\xi) \neq 0$ and there exists an integer $k \in \{1,...,N\}$ satisfying:
\begin{equation}
\sigma + \lambda_k(\eta,\xi) = \frac{\D\lambda_k}{\D \xi}(\eta,\xi) = 0.
\end{equation}
If $\pi(G)$ denotes the projection of $G$ on the first two coordinates (that is $\pi(\sigma,\eta,\xi) = (\sigma,\eta)$for all $ (\sigma,\eta,\xi)$), the glancing set $\mathcal{G}$ is $\mathcal{G} := \pi(G) \subset \Xi_0$.
\end{enumerate}
\end{defn}

\begin{prop} $[Kre70]$ Let the assumption \ref{stricthyperbolicity} and \ref{BNp} be satisfied. Then for all $\zeta \in \Xi \setminus \Xi_0$, the matrix $i\mathcal{A}(\zeta)$ has no purely imaginary eigenvalue and its stable subspace $\EE^s(\zeta)$ has dimension $p$. Furthermore, $\EE^s$ defines an analytic vector bundle over $\Xi \setminus \Xi_0$ that can be extended as a continuous vector bundle over $\Xi$.  
\end{prop}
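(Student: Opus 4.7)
The plan is to handle the assertions in turn: absence of purely imaginary eigenvalues of $i\mathcal{A}(\zeta)$ for $\gamma > 0$, the dimension count $\dim \EE^s(\zeta) = p$, the analytic bundle structure over $\Xi \setminus \Xi_0$, and finally the continuous extension across $\Xi_0$.

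First I would unwind $\mathcal{A}(\zeta) = -B_2^{-1}(\tau I + B_1 \eta)$: if $\xi \in \RR$ were a real eigenvalue of $\mathcal{A}(\zeta)$ with eigenvector $v \neq 0$, then $(\tau I + B_1 \eta + \xi B_2)v = 0$, but strict hyperbolicity gives $\det(\tau I + B_1 \eta + \xi B_2) = \prod_{k=1}^{N}(\tau + \lambda_k(\eta, \xi))$ with $\lambda_k$ real, and each factor has imaginary part $-\gamma \neq 0$ when $\gamma > 0$. This contradiction shows $i\mathcal{A}(\zeta)$ has no purely imaginary eigenvalue on $\Xi \setminus \Xi_0$. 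Since the parametrization $(\sigma, \gamma, \eta) \mapsto (\sigma - i\gamma, \eta)$ identifies $\Xi \setminus \Xi_0$ with the connected set $\RR \times (0, \infty) \times \RR$ and no eigenvalue crosses the imaginary axis, $\dim \EE^s(\zeta)$ is locally constant, hence constant. I would evaluate it at $\zeta_\ast = (-i, 0)$, where $i\mathcal{A}(\zeta_\ast) = -B_2^{-1}$; by assumption \ref{BNp} the matrix $B_2$ has $p$ positive and $N-p$ negative real eigenvalues, so $-B_2^{-1}$ has exactly $p$ eigenvalues with negative real part, giving $\dim \EE^s = p$.

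For the analytic bundle structure over $\Xi \setminus \Xi_0$, I would use the Dunford--Taylor projector
\begin{equation*}
\Pi(\zeta) = \frac{1}{2\pi i} \oint_\Gamma (z I - i \mathcal{A}(\zeta))^{-1} \, dz,
\end{equation*}
where $\Gamma$ is a smooth closed contour in the open left half-plane enclosing all eigenvalues of $i\mathcal{A}(\zeta_0)$ with negative real part, for a chosen reference point $\zeta_0$. The absence of imaginary eigenvalues allows the same contour to persist throughout a neighborhood of $\zeta_0$, so $\Pi(\zeta)$ is analytic in $\zeta$, of constant rank $p$, with range $\EE^s(\zeta)$; this realizes $\EE^s$ as an analytic subbundle of $(\Xi \setminus \Xi_0) \times \CC^N$.

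The main obstacle is the continuous extension across $\Xi_0$, where eigenvalues can coalesce on the imaginary axis and the contour argument collapses. I would follow Kreiss's original strategy: for any $\zeta_0 \in \Xi_0$, use strict hyperbolicity to smoothly block-diagonalize $\mathcal{A}(\zeta)$ near $\zeta_0$ into blocks indexed by the distinct eigenvalues of $\mathcal{A}(\zeta_0)$, and treat each block separately. Blocks attached to a non-real eigenvalue of $\mathcal{A}(\zeta_0)$ remain elliptic near $\zeta_0$ and inherit a holomorphic spectral projector as above; blocks attached to a real eigenvalue with $\zeta_0 \in \mathcal{H} \setminus \mathcal{G}$ are diagonalizable with distinct real branches, and a perturbation in $\gamma$ resolves each branch as stable or unstable according to the sign of the imaginary part thereby introduced; the genuine difficulty lies at glancing points $\zeta_0 \in \mathcal{G}$, where a Ralston--Majda normal form reduces $\mathcal{A}$ near $\zeta_0$ to a standard $2 \times 2$ block whose stable subspace can be computed explicitly and shown to have a well-defined continuous limit as $\gamma \to 0^+$. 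Summing the stable contributions across all blocks then yields the desired continuous extension of $\EE^s$ to $\Xi$.
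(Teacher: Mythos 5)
Your proposal is essentially the classical proof of this result, which the paper itself does not prove at all: it is quoted verbatim from [Kre70], so there is no in-paper argument to compare against. Your first three steps are correct and standard: the Hersh-type argument (a real eigenvalue $\xi$ of $\mathcal{A}(\zeta)$ would force $\det(\tau I+B_1\eta+B_2\xi)=\prod_k(\tau+\lambda_k(\eta,\xi))=0$, impossible for $\gamma>0$; handle the trivial case $\eta=\xi=0$ via $\tau\neq 0$), the connectedness-plus-test-point computation giving $\dim\EE^s=p$, and the Dunford--Taylor projector for analyticity on $\Xi\setminus\Xi_0$. Two small points deserve care. First, Assumption \ref{BNp} only says $B_2$ is invertible with $p$ positive eigenvalues; that the remaining $N-p$ eigenvalues are real and negative follows from Assumption \ref{stricthyperbolicity} with $\eta=0$ (the eigenvalues of $B_2$ are the $\lambda_k(0,1)$), so you should cite that rather than \ref{BNp} alone. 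Second, and more substantively, at a glancing point the relevant block is not always $2\times 2$: the block size equals the order of tangency of $\sigma+\lambda_k(\eta,\cdot)$, which for a strictly hyperbolic system can exceed two, and the continuous limit of the stable subspace for these larger Jordan-type blocks is exactly where the real work in Kreiss's proof (or in M\'etivier's block-structure theorem) lies. Your outline is the right one, but as written it only disposes of the generic second-order glancing case; to claim the full continuous extension over $\Xi$ you must either invoke the block structure condition for general block size or restrict attention to the generic situation and say so.
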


We define $\EE^s(\sigma,\eta)$ to be the continuous extension of $\EE^s(\zeta)$. It is shown in \cite{Met} that 
\begin{enumerate}
\item The hyperbolic region $\mathcal{H}$ doesn't contain any glancing point.
\item Away from the glancing point, $\EE^s(\zeta)$ depends analytically on $\zeta$.
\end{enumerate}

\begin{defn} \label{ulcdefn} \cite{Kre70}
Let $p$ be the number of positive eigenvalues of $B_2$, and let 
\begin{equation}
L(\D) = \D_t + B_1\D_{x_1} + B_2\D_{x_2}
\end{equation}
The problem $(L(\D),B)$ is said to be uniformly stable or to satisfy the uniform Lopatinski condition (ULC) if 
\begin{equation} \label{eqn:defnlop}
B: \EE^s(\zeta) \rightarrow \CC^p
\end{equation}
is an isomorphism for all $\zeta \in \Sigma$. Similarly, we say $(L(\D),B)$ satisfies the ULC on $\Xi$ (respectively, on a closed conic subset $\Gamma \subset \Xi$) if \eqref{eqn:defnlop} is an isomorphism on $\Sigma$ (respectively, on the subset of $\Sigma$ corresponding to $\Gamma$). 
\end{defn}

For a fixed $\beta = (\underline{\sigma},\underline{\eta}) \in \mathcal{H}$, we have the following result which follows from assumption \ref{stricthyperbolicity}.
\begin{prop} \cite{W4}
There exists $\delta>0$ such that on the conic neighborhood of $\beta$:
\begin{equation} \label{conicnbhd}
\Gamma^+_{\delta}(\beta) = \left\lbrace \zeta \in \Xi: \bigl| \frac{\zeta}{|\zeta|}-\frac{\beta}{|\beta|}\bigr| \leq \delta \right\rbrace
\end{equation}
the matrix $\mathcal{A}(\zeta) = -(A_0\tau + A_1 \eta)$ has $N$ distinct eigenvalues $\omega_j(\zeta)$ and corresponding eigenvectors $R_j(\zeta)$ such that:
\begin{equation}  
\mathcal{A}_j(\zeta)R_j(\zeta) = \omega_j(\zeta)R_j(\zeta),\ \ j=1,...,N \text{ on } \Gamma^+_{\delta}(\beta)
\end{equation}
Functions $\omega_j$, $R_j$ map are homogeneous of degree one,  analytic in $\tau$, and $C^\infty$ in $\eta$.  The corresponding normalized vectors
\begin{equation}
r_j(\zeta):= \frac{R_j(\zeta)}{|R_j(\zeta)|}
\end{equation}
are $C^\infty$ in $(\sigma,\gamma,\eta)$.  
\end{prop}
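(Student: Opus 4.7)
The plan is to derive this proposition from the hypothesis $\beta \in \mathcal{H}$ together with Assumption \ref{stricthyperbolicity} (strict hyperbolicity), via continuity of the roots of the characteristic polynomial plus the implicit function theorem. The first and key step is to show that $\mathcal{A}(\beta)$ already has $N$ distinct real eigenvalues. By Definition \ref{defnhyperbolic} the matrix is diagonalizable with real eigenvalues; using $A_0 = B_2^{-1}$ and $A_1 = B_2^{-1}B_1$, the equation $\det(\mathcal{A}(\beta) - \omega I) = 0$ is equivalent (after multiplying by $\det B_2 \neq 0$) to $\det(\underline{\sigma} I + B_1\underline{\eta} + \omega B_2) = 0$, i.e. $\underline{\sigma} + \lambda_k(\underline{\eta},\omega) = 0$ for some $k$. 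If two eigenvalues coincided at a common value $\omega_0$, then strict hyperbolicity applied at $(\eta,\xi) = (\underline{\eta},\omega_0)$ would be violated unless $(\underline{\eta},\omega_0) = 0$; the case $\underline{\eta} = 0$ (forcing $\underline{\sigma}\ne 0$) is ruled out separately by noting that $\mathcal{A}(\beta) = -\underline{\sigma}B_2^{-1}$ has eigenvalues $-\underline{\sigma}/\mu_i$, with the $\mu_i$ distinct thanks to strict hyperbolicity specialized to $\eta = 0$.

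Once distinctness at $\beta$ is in hand, I would propagate it to a small conic neighborhood. The characteristic polynomial $P(\omega,\zeta) = \det(\omega I - \mathcal{A}(\zeta))$ has coefficients polynomial in $(\tau,\eta)$, so continuity of roots produces $\delta > 0$ such that the $N$ roots remain simple on $\Gamma^+_\delta(\beta)$ and can be labelled coherently as $\omega_j(\zeta)$. Since each is a simple root, the holomorphic implicit function theorem applied to $P = 0$ in the complex variable $\tau = \sigma - i\gamma$ (with $\eta$ fixed) yields analyticity in $\tau$, and the real implicit function theorem yields $C^\infty$ dependence in $\eta$. Homogeneity of degree one is inherited from $\mathcal{A}(\lambda\zeta) = \lambda\mathcal{A}(\zeta)$ for $\lambda > 0$, which forces $\omega_j(\lambda\zeta) = \lambda\omega_j(\zeta)$ after continuous relabelling.

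For the eigenvectors I would pass through the Dunford--Taylor spectral projector
\begin{equation*}
P_j(\zeta) = \frac{1}{2\pi i}\oint_{|\omega - \omega_j(\zeta)| = \rho} (\omega I - \mathcal{A}(\zeta))^{-1}\, d\omega,
\end{equation*}
shrinking $\delta$ and choosing $\rho>0$ so the contours stay mutually disjoint uniformly in $\zeta$. Then $P_j(\zeta)$ inherits the analyticity in $\tau$ and smoothness in $\eta$ of $\mathcal{A}$ and $\omega_j$, and $R_j(\zeta) := P_j(\zeta)v$ for a conveniently chosen fixed vector $v$ provides an eigenvector of the required regularity; a scalar normalization then restores degree-one homogeneity. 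Finally $r_j = R_j/|R_j|$ is $C^\infty$ in $(\sigma,\gamma,\eta)$ but fails to be holomorphic in $\tau$ because the modulus is not holomorphic, matching the stated regularity exactly. The main obstacle is the distinctness step at $\beta$: this is where the definition of $\mathcal{H}$ interacts non-trivially with strict hyperbolicity, and the degenerate direction $\underline{\eta} = 0$ requires its own short argument. Everything beyond that is standard holomorphic and real-analytic perturbation theory together with homogeneity bookkeeping.
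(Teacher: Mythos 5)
The paper itself offers no proof of this statement—it is quoted verbatim from \cite{W4}—so your proposal can only be judged against the standard argument, and its overall architecture (distinctness of the eigenvalues at $\beta$, continuity of simple roots on a conic neighborhood, holomorphic/real implicit function theorem for regularity, Riesz--Dunford projectors for the eigenvectors, homogeneity bookkeeping) is the right one. However, the step you yourself single out as the crux, distinctness of the eigenvalues of $\mathcal{A}(\beta)$, has a genuine hole as written. A repeated root $\omega_0$ of $\det(\underline{\sigma}I+B_1\underline{\eta}+\omega_0B_2)=\prod_k\bigl(\underline{\sigma}+\lambda_k(\underline{\eta},\omega_0)\bigr)$ need not arise from two distinct sheets $\lambda_{k_1},\lambda_{k_2}$ meeting at $(\underline{\eta},\omega_0)$; it can arise from a single factor $\underline{\sigma}+\lambda_k(\underline{\eta},\cdot)$ vanishing to second order in $\xi$, i.e. $\underline{\sigma}+\lambda_k(\underline{\eta},\omega_0)=0$ together with $\partial_\xi\lambda_k(\underline{\eta},\omega_0)=0$. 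That is precisely the glancing configuration of Definition \ref{defnhyperbolic}, and it does not contradict strict hyperbolicity, so your dichotomy ``two eigenvalues coincide $\Rightarrow$ strict hyperbolicity is violated unless $(\underline{\eta},\omega_0)=0$'' fails exactly in that case.

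The repair is short and uses the diagonalizability you quoted but never exploited: for $(\underline{\eta},\omega_0)\neq 0$ the matrix $B_1\underline{\eta}+\omega_0B_2$ has the $N$ distinct eigenvalues $\lambda_k(\underline{\eta},\omega_0)$, so $\ker\bigl(\underline{\sigma}I+B_1\underline{\eta}+\omega_0B_2\bigr)$, which equals $\ker\bigl(\mathcal{A}(\beta)-\omega_0I\bigr)$ because $B_2$ is invertible, is at most one-dimensional; hence every eigenvalue of $\mathcal{A}(\beta)$ has geometric multiplicity one, and diagonalizability (the definition of $\mathcal{H}$) then forces every algebraic multiplicity to be one, i.e. $N$ distinct eigenvalues. (Alternatively one may invoke the quoted fact that $\mathcal{H}$ contains no glancing points, but that is the deeper statement and should not be assumed silently.) Your treatment of the degenerate direction $\underline{\eta}=0$ is fine. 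One smaller point: the spectral projector $P_j(\zeta)$ is homogeneous of degree zero, so to make $R_j$ homogeneous of degree one while preserving analyticity in $\tau$ you should multiply $P_j(\zeta)v$ by a linear form $a\tau+b\eta$ chosen not to vanish on $\Gamma^+_\delta(\beta)$, not by $|\zeta|$, which is not analytic in $\tau$. With these two repairs your argument is complete and matches the statement's regularity claims, including the loss of analyticity for $r_j=R_j/|R_j|$.
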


\begin{defn} \label{defnincoming} \cite{W4}
For each root $\omega_j(\beta) = \underline{\omega}_j$ there corresponds a unique integer $k_j \in \{1,...,N\}$ such that $\underline{\sigma} + \lambda_{k_j}(\underline{\eta}, \underline{\omega}_j) = 0$. We define the following real phases $\phi_j$ and their associated group velocities $v_j$:
\begin{equation}
\phi_j(x):= \phi_0(t,y) + \underline{\omega}_j x_2, \quad v_j := \nabla \lambda_{k_j}(\underline{\eta}, \underline{\omega}_j), \quad \forall j = 1,2,...,N
\end{equation}
The phase $\phi_j$ is incoming if $\D_\xi \lambda_{k_j}(\underline{\eta},\underline{\omega}_j) > 0$ , and is outgoing if $\D_\xi \lambda_{k_j}(\underline{\eta},\underline{\omega}_j) < 0$. If the phase $\phi_j$ is incoming (resp. outgoing), we refer to the corresponding frequency $\underline{\omega}_j$ as incoming (resp. outgoing).  For $\beta \in \mathcal{H}$, there are exactly $N-p$ outgoing phases and $p$ incoming phases.  We denote the corresponding set of indices by $\mathcal{I} = \{N-p+1,...,N\}$ for incoming,  and $\mathcal{O} = \{1,2,...,N-p\}$ for outgoing.
\end{defn}

\begin{prop} \cite{W4}
The $\omega_j$ are real-valued for $\gamma=0$ and can be  divided into two groups according as $j\in \mathcal O$ or $\mathcal I$.     There exists  a constant $c>0$ such that for $\zeta\in\Gamma^+_\delta(\beta)$
 \begin{align}\label{s5}
 \begin{split}
 &\mathrm{Im}\;\omega_j(\zeta)\leq -c\gamma \text{ for }j\in\mathcal{O}\\
 &\mathrm{Im}\;\omega_j(\zeta)\geq c\gamma \text{ for }j\in \mathcal{I}.
\end{split}
\end{align}
\end{prop}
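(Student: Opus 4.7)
The plan is to relate each $\omega_j(\zeta)$ to the characteristic speed $\lambda_{k_j}$ from strict hyperbolicity and then implicitly differentiate in $\gamma$. Since $\mathcal{A}(\zeta) = -B_2^{-1}(\tau I + B_1\eta)$, the characteristic polynomial factors via Assumption \ref{stricthyperbolicity} as
\begin{equation*}
\det(\omega I - \mathcal{A}(\zeta)) = \det(B_2^{-1})\prod_{k=1}^{N}(\tau + \lambda_k(\eta,\omega)),
\end{equation*}
so each eigenvalue $\omega_j(\zeta)$ satisfies $\tau + \lambda_{k_j}(\eta,\omega_j(\zeta)) = 0$ for a unique index $k_j$. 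At $\zeta = \beta$ this is precisely the $k_j$ selected in Definition \ref{defnincoming}, and by continuity together with the distinctness of the $\omega_j$ on $\Gamma^+_\delta(\beta)$, the same $k_j$ remains valid throughout the cone.

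For the first claim, since $\mathcal{H}$ is open in $\Xi_0$ (no glancing at $\beta$ gives $\partial_\xi\lambda_{k_j}\neq 0$, making the real equation $\sigma + \lambda_{k_j}(\eta,\omega)=0$ smoothly solvable in a real neighborhood), I would shrink $\delta$ if necessary so that $\Gamma^+_\delta(\beta)\cap\Xi_0\subset \mathcal{H}$; Definition \ref{defnhyperbolic}(i) then forces the $\omega_j(\zeta)$ to be real when $\gamma=0$. For the second claim, treating $\omega_j$ as analytic in $\tau=\sigma-i\gamma$ and differentiating the relation $\tau + \lambda_{k_j}(\eta,\omega_j)=0$ along the ray with $(\sigma,\eta)$ fixed gives
\begin{equation*}
\frac{d\omega_j}{d\gamma} = \frac{i}{\partial_\xi\lambda_{k_j}(\eta,\omega_j(\zeta))}.
\end{equation*}
Restricting to $\gamma=0$, where $\omega_j$ and $\partial_\xi\lambda_{k_j}(\eta,\omega_j)$ are real, yields $\mathrm{Im}\,(d\omega_j/d\gamma)|_{\gamma=0} = 1/\partial_\xi\lambda_{k_j}(\eta,\omega_j)$, which is positive for $j\in\mathcal{I}$ and negative for $j\in\mathcal{O}$ by Definition \ref{defnincoming}. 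A first-order Taylor expansion in $\gamma$ starting from $\mathrm{Im}\,\omega_j|_{\gamma=0}=0$ then delivers the signed bounds pointwise for small $\gamma$.

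The main obstacle is upgrading this pointwise expansion to a uniform estimate on all of $\Gamma^+_\delta(\beta)$ with a single constant $c$. I would use the degree-one homogeneity of $\omega_j$ to reduce to the compact slice $\Gamma^+_\delta(\beta)\cap\{|\zeta|=1\}$; on this slice $\gamma\leq\delta$ because $\beta/|\beta|\in\Xi_0$ has no imaginary part. Continuity of $\partial_\xi\lambda_{k_j}(\eta,\omega_j(\zeta))$ together with its nonvanishing at $\beta$ (built into the nonglancing condition $\beta\in\mathcal{H}$) yields a uniform lower bound $|\partial_\xi\lambda_{k_j}|\geq c_0>0$ after shrinking $\delta$ once more. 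This ensures the $O(\gamma^2)$ Taylor remainder is dominated by the linear term on the compact slice, producing a uniform constant $c$; homogeneity then propagates the bound to all $\zeta\in\Gamma^+_\delta(\beta)$, which is exactly \eqref{s5}.
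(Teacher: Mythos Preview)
The paper does not supply a proof of this proposition; it is quoted from \cite{W4} as background and stated without argument. Your proof is correct and is in fact the standard route to this result: factor the characteristic polynomial of $\mathcal{A}(\zeta)$ via strict hyperbolicity, read off the implicit relation $\tau+\lambda_{k_j}(\eta,\omega_j)=0$, differentiate in $\gamma$ to obtain $\mathrm{Im}\,\partial_\gamma\omega_j|_{\gamma=0}=1/\partial_\xi\lambda_{k_j}$ with the sign dictated by incoming/outgoing, and then pass from the pointwise Taylor expansion to a uniform bound by homogeneity and compactness of the unit slice of $\Gamma^+_\delta(\beta)$. One small clarification you could add: the nonvanishing of $\partial_\xi\lambda_{k_j}$ at $\beta$ is exactly the statement that $\beta\in\mathcal{H}$ lies away from the glancing set $\mathcal{G}$ (Definition \ref{defnhyperbolic}), which you invoke but could make explicit when shrinking $\delta$.
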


We define the $N\times (N-p)$ matrix $r_+(\zeta)$ of normalized eigenvectors corresponding to the outgoing phases and the $N\times p$ matrix $r_-(\zeta)$  corresponding to the incoming phases by 
\begin{align}\label{s6}
r_+=\begin{pmatrix}r_1&r_2&\dots&r_{N-p}\end{pmatrix}, \qquad r_-=\begin{pmatrix}r_{N-p+1}&\dots&r_{N}\end{pmatrix} \text{ on }\Gamma^+_\delta(\beta).
\end{align}
We similarly define $R_\pm(\zeta)$ using the  eigenvectors $R_j(\zeta)$. 
We also define the $N \times N$ matrix
\begin{equation} \label{szeta}
S(\zeta) = (r_+(\zeta)\quad r_-(\zeta)) = (r_1, ... ,r_N) \text{ on } \Gamma_\delta^+(\beta)
\end{equation}
\begin{prop}\cite{W4}  \label{esrj}
For $\zeta \in \Gamma_\delta^+(\beta_l)$, the stable subspace $\EE^s(\beta_l)$ admits decomposition:
\begin{equation}
\EE^s(\beta_l) = \bigoplus_{j \in \mathcal{I}} r_j(\zeta)
\end{equation}
and $r_j$ can be (and are) taken to be real vectors.
\end{prop}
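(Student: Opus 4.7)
The plan is to identify the stable subspace of $i\mathcal{A}(\zeta)$ directly via the eigenvector data already produced in the previous proposition, together with the sign information on $\mathrm{Im}\,\omega_j$ in \eqref{s5}.

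First, I would observe that $\mathcal{A}(\zeta)r_j(\zeta)=\omega_j(\zeta)r_j(\zeta)$ immediately gives $i\mathcal{A}(\zeta)r_j(\zeta)=i\omega_j(\zeta)r_j(\zeta)$, so that each $r_j$ is an eigenvector of $i\mathcal{A}(\zeta)$ with eigenvalue $i\omega_j(\zeta)$ whose real part equals $-\mathrm{Im}\,\omega_j(\zeta)$. For $\zeta\in\Gamma_\delta^+(\beta_l)$ with $\gamma>0$, the bound $\mathrm{Im}\,\omega_j(\zeta)\geq c\gamma$ for $j\in\mathcal{I}$ from \eqref{s5} yields $\mathrm{Re}(i\omega_j(\zeta))\leq -c\gamma<0$, so $r_j(\zeta)\in\EE^s(\zeta)$ for each $j\in\mathcal{I}$. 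Symmetrically, for $j\in\mathcal{O}$ one has $\mathrm{Re}(i\omega_j(\zeta))\geq c\gamma>0$, so those $r_j$ lie in the unstable subspace and are excluded.

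Next I would use strict hyperbolicity (Assumption \ref{stricthyperbolicity}) together with the smallness of $\delta$ to conclude that the $N$ eigenvalues $\omega_j(\zeta)$ are pairwise distinct throughout $\Gamma_\delta^+(\beta_l)$, so that the $p$ eigenvectors $\{r_j(\zeta)\}_{j\in\mathcal{I}}$ are linearly independent. Hence $\bigoplus_{j\in\mathcal{I}}\mathrm{span}\,r_j(\zeta)$ is a $p$-dimensional subspace of $\EE^s(\zeta)$. Since Kreiss' proposition gives $\dim\EE^s(\zeta)=p$ for $\zeta\in\Xi\setminus\Xi_0$, equality follows on the open set $\gamma>0$, and the extension to $\gamma=0$ is obtained by passing to the limit: $\EE^s(\zeta)$ extends continuously to $\Xi$ (Kreiss), while the $r_j(\zeta)$ are $C^\infty$ up to $\gamma=0$ on $\Gamma_\delta^+(\beta_l)$ by the previous proposition.

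For the reality assertion, the point is that at $\beta_l\in\mathcal{H}$ the matrix $\mathcal{A}(\beta_l)=-(A_0\underline{\sigma}+A_1\underline{\eta})$ is real with $N$ distinct real eigenvalues $\underline{\omega}_j$, so each eigenspace $\ker(\mathcal{A}(\beta_l)-\underline{\omega}_j I)$ is a $1$-dimensional real subspace, and a unit real eigenvector $r_j(\beta_l)$ can be selected. The same reasoning applies at every real $\zeta\in\Gamma_\delta^+(\beta_l)\cap\mathcal{H}$, and the $C^\infty$ selection already constructed can be made real-valued throughout that real part of the neighborhood by dividing by an appropriate phase.

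I do not anticipate a serious obstacle; the only delicate point is the continuous passage to $\gamma=0$, which is handled by directly invoking the continuous extension of $\EE^s$ from Kreiss' proposition and the smoothness of the $r_j$ stated in the preceding proposition.
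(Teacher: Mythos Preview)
The paper does not give its own proof of this proposition; it is simply stated and attributed to \cite{W4} (and the same fact reappears as Lemma \ref{E^s}, attributed to \cite{CG10}, again without proof). Your argument is the standard one and is correct: identify the $r_j(\zeta)$ for $j\in\mathcal{I}$ as eigenvectors of $i\mathcal{A}(\zeta)$ with eigenvalues of negative real part via \eqref{s5}, count dimensions using Kreiss' proposition, and pass to $\gamma=0$ by continuity of both $\EE^s$ and the $r_j$. The reality claim is, as you note, only meaningful on $\gamma=0$ (in particular at $\beta_l$ itself), where $\mathcal{A}$ is real with simple real eigenvalues; your justification there is adequate. There is nothing further to compare, since the paper outsources the proof entirely.
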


\begin{defn} \label{defnres}\cite{W4} For $i \in \mathcal{O}, j,N \in \mathcal{I}$ with $j \neq N$, we say that the phases $(\phi_j,\phi_N,\phi_i)$ exhibit a resonance if there exist $p,q \in \ZZ\setminus 0$ such that 
\begin{equation}
p\phi_j + q \phi_N = (p+q) \phi_i
\end{equation}
\end{defn}
Let 
\begin{equation} \label{Omegadefn}
\Omega_{i,j} := \frac{\omega_i(\beta_l)-\omega_N(\beta_l)}{\omega_j(\beta_l)-\omega_i(\beta_l)} \quad i \in \mathcal{O}, j\in \mathcal{I} \setminus \{N\}
\end{equation}
\begin{rem}
If there exists $p,q \in \ZZ\setminus 0$ such that $(\phi_j,\phi_N,\phi_i)$ exhibit a resonance, then 
\begin{equation}
p\phi_j + q \phi_N = (p+q) \phi_i \iff 
p\omega_j(\beta_l) + q \omega_n(\beta_l) = (p+q) \omega_i(\beta_l) \iff
\frac{p}{q} =  \frac{\omega_i(\beta_l)-\omega_N(\beta_l)}{\omega_j(\beta_l)-\omega_i(\beta_l)} = \Omega_{i,j}
\end{equation}
Above equations show that there is resonance $\iff$ $\Omega_{i,j} \in \QQ$.
\end{rem}
In this paper, we only study the case when there is no resonance. Next we define Lopatinski determinant.

\begin{defn}\label{lopdeterm} \cite{W3} 
For $\zeta \in \Gamma^+_\delta(\beta)$, define the analytic Lopatinski determinant
\begin{equation}
\Delta_a(\zeta) = \det BR_{-}(\zeta) 
\end{equation}
and the normalized Lopatinski determinant:
\begin{equation}
\Delta(\zeta) = \det Br_{-}(\zeta) 
\end{equation}
\end{defn}

We next make weak stability assumption on the problem $(L(\D),B)$ when ULC fails.  This assumption is not in effect in section 3 where we study the case when ULC holds. 

\begin{assump} \cite{W4} \label{lopassump}
\begin{enumerate}
\item For all $\zeta \in \Xi \backslash \Xi_0, \ker B\cap \EE^s(\zeta) = \{0\}$.
\item The set $\Upsilon_0 := \{\zeta \in \Sigma_0: \ker B \cap \EE^s(\zeta) \neq \{0\}\} = \{\beta_l , -\beta_l\}$ is included in the hyperbolic region $\CH$.
\item  There exists a neighborhood $\Gamma^+_\delta(\beta_l)$ as in  \eqref{conicnbhd} on which functions $\omega_j$, $R_j$, $\Delta$ are defined and we have:
\begin{equation}
\Delta(\beta_l) = 0 \text{ and } \D_\tau\Delta(\beta_l)\neq 0 
\end{equation}
\end{enumerate}
\end{assump}

\begin{notn}
Throughout this paper, the notation $L^2(t,x,\theta)$ denotes the space $L^2(\Omega \times \TT_\theta)$ where $\TT_\theta = \RR^3 /2 \pi \ZZ$. We similarly define the space $H^\infty (t,x,\theta)$ etc.
\end{notn}


\newpage
\section{Geometric Optics Solutions for The Hyperbolic System when ULC Holds} \label{section1}
In this section, we consider the case when uniform Lopatinski condition holds. We construct geometric optics solutions for a $3 \times 3$ system with oscillatory term term  $e^{i\frac{\phi_3}{\epsilon}}M$ and use an estimate from \cite{Kre70} to show that this solution is close to the exact solution.  This section is a warm-up to be contrasted with the more involved construction of the final section, which deals with a problem where ULC fails.

\subsection{Introduction} \label{introduction} \hfill \\

We study the following $3 \times 3$ hyperbolic system on $\Omega_T =(-\infty,T) \times \{(x_1,x_2):x_2 \geq 0\}$:

\begin{equation} \label{eqn:system}
	\begin{aligned}
&L(\partial)u + e^{i\frac{\phi_3}{\epsilon}}Mu:= \partial_tu + B_1\partial_{x_1}u + B_2\partial_{x_2}u + e^{i\frac{\phi_3}{\epsilon}}Mu = 0 \text{ in } x_2>0 \\ 
&Bu := \epsilon G \left( t,x_1,\frac{\phi_0}{\epsilon} \right) \text{ on }x_2 = 0 \\ 
&u = 0 \text{ in } t<0 
	\end{aligned}
\end{equation}
where all $B_j$'s are $3 \times 3$ matrices, $B_2$ is invertible, $M$ is a constant matrix, $\phi_1$ is outgoing, $\phi_2,\phi_3$ is incoming, and $G$ is periodic in $\theta_0 = \frac{\phi_0}{\eps}$,  where:\\
\begin{equation}
	\begin{aligned}
	&\phi_m (t,x_1,x_2) = \beta_l \cdot (t, x_1) + \omega_m(\beta_l)x_2 \\
	&d\phi_m = (\beta_l, \omega_m(\beta_l))
	\end{aligned}
\end{equation}
and $\beta_l$ being a fixed point in $\mathcal{H}$. We write $U\left(t,x,\frac{\Phi}{\epsilon} \right) = U\left(t,x,\theta \right)|_{\theta = \frac{\Phi}{\epsilon}}$ where $\Phi = (\phi_1, \phi_2, \phi_3)$ and $\theta = (\theta_1,\theta_2,\theta_3)$, $\theta_0 = \frac{\phi_0}{\epsilon}$. We set: 
\begin{equation}
	\begin{aligned}
&L(\partial) =  \partial_t + B_1\partial_{x_1} + B_2\partial_{x_2} \quad
L(\sigma,\eta,\xi) = \sigma I + B_1 \eta + B_2 \xi \\
&\mathcal{L}(\partial_\theta) = \sum_{m=1}^3 L(d\phi_m)\partial_{\theta_{m}} 
	\end{aligned}
\end{equation}
Let $\mathcal{A} (\beta_l)$ be the matrix:
\begin{equation}
	\begin{aligned}
	\mathcal{A} (\beta_l) = -(A_0 \sigma_l + A_1 \eta_l) \text{, where } A_0 = B_2^{-1}, A_1 = B_2^{-1}B_1
	\end{aligned}
\end{equation}
By strict hyperbolicity, the matrix $\mathcal{A}(\beta_l)$ is diagonalizable with eigenvalues $\omega_m(\beta_l) = \omega_m$, $m = 1,...,3$, and we observe that the eigenspace of $\mathcal{A}(\beta_l)$ for $\omega_m$ is precisely the kernel of $L(d\phi_m)$. 


\subsection{Tools for constructing approximate solutions} \cite{W4}\label{tfcas}

This section establishes lemmas, definitions, and assumptions for constructing approximate solutions.

\begin{lem}\label{E^s} $[CG10]$ The (extended) stable subspace $\EE^s(\beta_l)$ admits the decomposition
\begin{equation}
\EE^s(\beta_l) = \oplus_{m\in \mathcal{I}} \Ker L(d\phi_m)
\end{equation}
and each vector space in this decomposition is of real type (it admits a basis of real vectors).
\end{lem}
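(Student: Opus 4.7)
The plan is to reduce the statement directly to Proposition \ref{esrj} together with an elementary identification of each summand as a kernel. First I would recall that, by Proposition \ref{esrj}, on the conic neighborhood $\Gamma^+_\delta(\beta_l)$ the extended stable subspace admits the decomposition
\begin{equation}
\EE^s(\beta_l) = \bigoplus_{m \in \mathcal{I}} \Span\{r_m(\beta_l)\},
\end{equation}
where the normalized eigenvectors $r_m(\beta_l)$ satisfy $\mathcal{A}(\beta_l)r_m(\beta_l) = \omega_m(\beta_l)r_m(\beta_l)$ and can be chosen to be real.

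Next I would identify $\Span\{r_m(\beta_l)\}$ with $\Ker L(d\phi_m)$ by a direct computation. Since $d\phi_m = (\sigma_l,\eta_l,\omega_m(\beta_l))$ and $\mathcal{A}(\beta_l) = -B_2^{-1}(\sigma_l I + B_1 \eta_l)$, the eigenvalue equation $\mathcal{A}(\beta_l)r_m = \omega_m r_m$ is equivalent, after multiplying by $B_2$ on the left, to $(\sigma_l I + B_1\eta_l + B_2\omega_m)r_m = 0$, i.e., $L(d\phi_m)r_m(\beta_l) = 0$. Hence $\Span\{r_m(\beta_l)\} \subset \Ker L(d\phi_m)$. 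For the reverse inclusion I would invoke strict hyperbolicity (Assumption \ref{stricthyperbolicity}) combined with the fact that $\beta_l \in \mathcal{H}$: the matrix $\mathcal{A}(\beta_l)$ has $N$ distinct real eigenvalues $\omega_1(\beta_l) < \cdots < \omega_N(\beta_l)$, so each eigenspace is one-dimensional, and $\Ker L(d\phi_m) = \Ker(\mathcal{A}(\beta_l) - \omega_m(\beta_l)I)$ has dimension exactly $1$.

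Combining these two facts gives the claimed decomposition
\begin{equation}
\EE^s(\beta_l) \;=\; \bigoplus_{m\in\mathcal{I}}\Span\{r_m(\beta_l)\} \;=\; \bigoplus_{m\in\mathcal{I}}\Ker L(d\phi_m).
\end{equation}
For the real-type claim, Proposition \ref{esrj} already supplies real $r_m(\beta_l)$. Alternatively, one can argue intrinsically: $L(d\phi_m) = \sigma_l I + B_1\eta_l + B_2\omega_m(\beta_l)$ is a real matrix (all of $B_1, B_2, \sigma_l, \eta_l, \omega_m(\beta_l)$ are real, with $\omega_m(\beta_l) \in \mathbb{R}$ because $\beta_l \in \mathcal{H}$), so its kernel is the complexification of its real kernel and therefore admits a basis of real vectors.

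There is no real obstacle here; the one point that requires care is ensuring the eigenvalues $\omega_m(\beta_l)$ are genuinely real and simple so that the dimension count in the previous step is valid. This is precisely guaranteed by $\beta_l \in \mathcal{H}$ together with strict hyperbolicity, both of which are already in force from Section~\ref{sec2}, so the proof is essentially a bookkeeping of definitions.
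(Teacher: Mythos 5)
Your proof is correct, and it follows essentially the route the paper itself suggests: the paper does not prove this lemma (it is quoted from [CG10]), but the two ingredients you use are exactly the facts recorded there, namely Proposition \ref{esrj} for the decomposition $\EE^s(\beta_l)=\bigoplus_{m\in\mathcal{I}}\Span\{r_m\}$ with real $r_m$, and the observation in Section \ref{introduction} that, after multiplying the eigenvalue equation for $\mathcal{A}(\beta_l)$ by $B_2$, the eigenspace for $\omega_m(\beta_l)$ is precisely $\Ker L(d\phi_m)$, with the dimension count coming from strict hyperbolicity and $\beta_l\in\mathcal{H}$. Nothing further is needed.
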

\begin{lem}\label{decomplem} $[CG10]$ The following decomposition hold
\begin{equation} \label{eqn:decomp}
\CC^3 = \oplus_{m=1}^3 \Ker L(d\phi_m) = \oplus_{m=1}^3 B_2 \Ker L(d\phi_m)
\end{equation}
and each vector space in this decomposition is of real type. \\
We let $P_m$, respectively $Q_m$ ($m = 1,2,3$), denote the projectors associated with the first, respectively, second decomposition in \ref{eqn:decomp}. For each $m$, we have Im$L(d{\phi_m})=$Ker$Q_m$.
\end{lem}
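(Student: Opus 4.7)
The plan is to deduce everything from strict hyperbolicity (Assumption~\ref{stricthyperbolicity}) together with the invertibility of $B_2$, by reducing the $L(d\phi_m)$ picture to the eigenvalue/eigenvector picture for $\mathcal{A}(\beta_l)$.

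First I would note that since $B_2$ is invertible, $L(d\phi_m) = \sigma_l I + B_1\eta_l + B_2\omega_m = B_2\bigl(\omega_m I - \mathcal{A}(\beta_l)\bigr)$, so $\ker L(d\phi_m)$ is precisely the $\omega_m$-eigenspace of $\mathcal{A}(\beta_l)$. Because $\beta_l \in \mathcal{H}$, strict hyperbolicity forces $\mathcal{A}(\beta_l)$ to be diagonalizable with three distinct real eigenvalues $\omega_1 < \omega_2 < \omega_3$, and this gives the first decomposition $\CC^3 = \oplus_{m=1}^3 \ker L(d\phi_m)$. Applying the invertible real matrix $B_2$ to this direct sum yields the second decomposition $\CC^3 = \oplus_{m=1}^3 B_2\ker L(d\phi_m)$. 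The real-type property in both cases follows because $A_0, A_1, B_2$ are real and the $\omega_m$ are real, so each eigenspace of $\mathcal{A}(\beta_l)$ admits a real basis, and $B_2$ being real preserves that.

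For the final identity $\mathrm{Im}\,L(d\phi_m) = \ker Q_m$, the key observation is the one-line computation
\begin{equation}
L(d\phi_m)v - L(d\phi_k)v = (\omega_m - \omega_k) B_2 v,
\end{equation}
valid for any $v \in \CC^3$. Applied to $v \in \ker L(d\phi_k)$ with $k \neq m$, this gives $L(d\phi_m)v = (\omega_m - \omega_k) B_2 v$, and since $\omega_m \neq \omega_k$ and $B_2$ is injective, $L(d\phi_m)(\ker L(d\phi_k)) = B_2 \ker L(d\phi_k)$. For $k=m$, of course $L(d\phi_m)(\ker L(d\phi_m))=0$. Decomposing an arbitrary $v\in\CC^3$ via the first direct sum and applying $L(d\phi_m)$ termwise then shows
\begin{equation}
\mathrm{Im}\,L(d\phi_m) \;=\; \bigoplus_{k\neq m} B_2 \ker L(d\phi_k),
\end{equation}
which is exactly the kernel of the projector $Q_m$ onto $B_2\ker L(d\phi_m)$ along the complementary summands.

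There is no real obstacle: the proof is pure linear algebra once one recognizes $L(d\phi_m) = B_2(\omega_m I - \mathcal{A}(\beta_l))$. The one point to be careful about is the order of the argument — the second decomposition must be established first so that the projectors $Q_m$ are well-defined, and only then does the equality $\mathrm{Im}\,L(d\phi_m) = \ker Q_m$ become a dimension-matching statement between two explicit direct-sum complements.
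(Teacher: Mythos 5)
Your proof is correct, and it is essentially the only natural argument: the identity $L(d\phi_m)=B_2(\omega_m I-\mathcal{A}(\beta_l))$ reduces everything to the spectral decomposition of $\mathcal{A}(\beta_l)$, which is exactly the observation the paper itself records in Section \ref{introduction} ("the eigenspace of $\mathcal{A}(\beta_l)$ for $\omega_m$ is precisely the kernel of $L(d\phi_m)$"); the paper gives no proof of the lemma, citing [CG10], and your linear-algebra derivation, including the computation $L(d\phi_m)-L(d\phi_k)=(\omega_m-\omega_k)B_2$ yielding $\mathrm{Im}\,L(d\phi_m)=\oplus_{k\neq m}B_2\Ker L(d\phi_k)=\Ker Q_m$, is the standard route behind that citation.
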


\begin{prop} \cite{W4}
We can define operator $R_m$ uniquely by:
\begin{align}
R_mL(d\phi_m)= I - P_m,\quad
L(d\phi_m)R_m = I - Q_m,\quad
P_mR_m = 0,\quad
R_mQ_m = 0.
\end{align}
If we choose for each $m$ a real vector $r_m$ that spans $\text{Ker}L(d\phi_m)$, and  choose a real row vector $\ell_m$ that satisfy $\ell_mL(d\phi_m) = 0$ with the normalization $\ell_mB_2r_{m'} = \delta_{mm'}$, then $P_m$, $Q_m$, $R_m$ satisfies:
\begin{align}
\forall X\in \CC^3,\quad
R_mX = \sum_{m\neq m'}\frac{\ell_{m'}X}{\omega_m-\omega_{m'}}r_{m'},\quad
P_mX = (\ell_mB_2X)r_m,\quad
Q_mX = (\ell_mX)B_2r_m.
\end{align}
\end{prop}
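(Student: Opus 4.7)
The plan is to leverage strict hyperbolicity (Assumption \ref{stricthyperbolicity}) to work in the simultaneous eigenbasis $\{r_1,r_2,r_3\}$ of $\mathcal{A}(\beta_l)$ and reduce everything to explicit computation. The key algebraic identity I would record first is
\begin{equation}
L(d\phi_m)\,r_k \;=\; (\omega_m-\omega_k)\,B_2 r_k,
\end{equation}
obtained by factoring $L(d\phi_m)=\sigma_l I+B_1\eta_l+B_2\omega_m = B_2\bigl(\omega_m I-\mathcal{A}(\beta_l)\bigr)$ and using $\mathcal{A}(\beta_l)r_k=\omega_k r_k$. Two consequences are immediate. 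First, since the $\omega_j$ are distinct and $B_2$ is invertible, $\ker L(d\phi_m)=\mathrm{span}(r_m)$, consistent with Lemma \ref{decomplem}. Second, left-multiplying by $\ell_m$ yields $(\omega_m-\omega_k)\,\ell_m B_2 r_k=0$, so $\ell_m B_2 r_k=0$ whenever $k\neq m$; hence the normalization $\ell_m B_2 r_{m'}=\delta_{mm'}$ is both consistent and achievable (rescale $\ell_m$; non-vanishing of $\ell_m B_2 r_m$ would otherwise force $\ell_m$ to kill the full basis).

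Next I would verify the formulas for $P_m$ and $Q_m$ by evaluating on the respective adapted bases. The map $X\mapsto(\ell_m B_2 X)r_m$ sends $r_{m'}\mapsto \delta_{mm'} r_m$, so it is precisely the projector onto $\ker L(d\phi_m)$ parallel to $\bigoplus_{m'\neq m}\ker L(d\phi_{m'})$, i.e.\ $P_m$ in the first decomposition of \eqref{eqn:decomp}. Likewise $X\mapsto(\ell_m X)B_2 r_m$ sends $B_2 r_{m'}\mapsto \delta_{mm'} B_2 r_m$, identifying it with $Q_m$ in the second decomposition.

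For $R_m$ I would prove existence and uniqueness simultaneously by prescribing its action on the basis $\{B_2 r_k\}$. Applying $R_m L(d\phi_m)=I-P_m$ to $r_k$ and invoking the key identity forces $R_m B_2 r_k = r_k/(\omega_m-\omega_k)$ for $k\neq m$; applying $R_m Q_m=0$ together with $\mathrm{range}(Q_m)=\mathrm{span}(B_2 r_m)$ forces $R_m B_2 r_m = 0$. These prescriptions determine $R_m$ on a basis, giving uniqueness. Existence then follows by direct verification: applying the claimed closed form $R_m X = \sum_{m'\neq m}(\omega_m-\omega_{m'})^{-1}(\ell_{m'}X)\,r_{m'}$ to $B_2 r_k$ and using $\ell_{m'}B_2 r_k=\delta_{m'k}$ reproduces exactly the two values just computed. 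The remaining two relations come for free: $R_m$ takes values in $\bigoplus_{m'\neq m}\mathrm{span}(r_{m'})=\ker P_m$, so $P_m R_m=0$; and applying $L(d\phi_m)$ to the formula converts each $r_{m'}$ to $(\omega_m-\omega_{m'})B_2 r_{m'}$, cancelling the denominators and reproducing $I-Q_m$.

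The proof is essentially bookkeeping in the diagonalizing basis once the key identity and the biorthogonality $\ell_m B_2 r_{m'}=\delta_{mm'}$ are in hand; I do not foresee any genuine obstacle beyond being careful that the four operator equations are simultaneously realized by one and the same closed-form expression, which the verification on the basis $\{B_2 r_k\}$ handles cleanly.
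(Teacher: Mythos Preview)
Your proof is correct and is essentially the standard verification via the factorization $L(d\phi_m)=B_2(\omega_m I-\mathcal{A}(\beta_l))$ and the biorthogonality $\ell_m B_2 r_{m'}=\delta_{mm'}$. The paper itself provides no proof here---the proposition is simply quoted from \cite{W4}---so there is nothing to compare against beyond noting that your argument is exactly the kind of eigenbasis computation one would expect to underlie the cited result.
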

We define projection operators $E_Q$, $E_P$ on $H^{\infty} := H^{\infty}(\Omega_T \times \TT_\theta)$ and partial inverse operators $R$ of $\mathcal{L}(\D_\theta)$ on a proper subspace. We define 
\begin{equation}
E_P = E_{P_0} + \sum_{m=1}^3 E_{P_{m}},\quad E_{P_{in}} = E_{P_2}+E_{P_3},\quad E_{P_{out}} = E_{P_1}
\end{equation}
where $E_{P_0}$ picks out the mean and $E_{P_m}$ picks out the pure $\theta_m$ modes and then projects with $P_m$. For example, if we write:
\begin{equation} \label{eqn:Unotn}
U(t,x,\theta) = \underline{U}(t,x) + U^*(t,x,\theta) = \underline{U}(t,x) +\sum_{m=1}^3 U^m(t,x,\theta_m) + U^{nc}(t,x,\theta_1,\theta_2,\theta_3)
\end{equation}
where $\underline{U}$ is the mean of $U$, each $U^m$ has pure $\theta_m$ oscillations with mean zero, and $U^{nc}$ consists of only noncharacteristic modes in the Fourier series of $U$, we have:
\begin{align}
&E_{P_0}U = \underline{U},\quad E_{P_m}U = P_mU^m(t,x,\theta_m),\quad m=1,2,3\\
&(I-E_P)U = \sum_{m=1}^3(I-P_m)U^m + U^{nc}
\end{align}
We define $E_Q$ in the same way but we take the projection with $Q_m$ instead. 
Now to define the operator $R$,  we define the following subspace of $H^{\infty}$:
\begin{defn}
Define $\mathcal{N} = \{(k,l) \in \ZZ\setminus 0 \times \ZZ\setminus 0 \}$. Let $\mathcal{H}^{\infty}$ be the subspace of $H^{\infty}(t,x,\theta)$ given by\footnote{Here $\theta$ is a place holder for $\frac{\Phi}{\eps}$.}:
\begin{equation}
\mathcal{H}^{\infty} := \{U\in H^{\infty}: U^{nc} = \sum_{(k,l)\in \mathcal{N}} c^{k,l}(t,x)e^{i(k\theta_2 + l\theta_3)}\}
\end{equation}
\end{defn}


\begin{rem}
\begin{enumerate}
\item We will see later the profiles $U(t,x,\theta)$ that we construct lie in $\mathcal{H}^\infty$.
\item In this section, the profiles actually lie in a subset of $\mathcal{H}^\infty,$:
\begin{equation}
\{U\in H^{\infty}: U^{nc} = \sum_{(k,l)\in \mathcal{N}, l \geq 1} c^{k,l}(t,x)e^{i(k\theta_2 + l\theta_3)}\}
\end{equation}
since the oscillatory term in \eqref{eqn:system} only has positive Fourier spectrum.
\end{enumerate}
\end{rem}

We make the following small divisor assumption for both \eqref{eqn:system} and \eqref{eqn:msystem}.

\begin{assump} (Small divisor assumption)\label{sdassump}\footnote{This assumption is necessary for \ref{eqn: RUnc} to be well defined.}
There exists a constant $C>0$ and $a \in \RR$  such that $\forall (k,l) \in \mathcal{N}$ the following holds:
\begin{equation}
||L(ikd\phi_2+ild\phi_3)^{-1}|| \geq C|(k,l)|^{a}
\end{equation} 
\end{assump}

For $U \in \mathcal{H}^{\infty}$ we define $R$ by:
\begin{equation}
	\begin{aligned}
&R(\underline{U}) = 0\\
&R(U^m) = \D_{\theta_m}^{-1}R_mU^m\\
&R(U^{nc}) = \mathcal{L}(\D_\theta)^{-1}U^{nc}
\end{aligned}
\end{equation}
where $\D_{\theta_m}^{-1}$ denotes operator of finding the unique mean zero antiderivative in $\theta_m$, and a calculation shows that
\begin{equation} \label{eqn: RUnc}
\mathcal{L}(\D_\theta)^{-1}U^{nc} = \sum_{(k,l)\in \mathcal{N}} L^{-1}(ikd\phi_2 + ild\phi_3)c_{k,l}(t,x)e^{i(k\theta_2 + l\theta3)}
\end{equation}
The small divisor assumption \ref{sdassump}, then shows $R: \mathcal{H}^{\infty} \rightarrow \mathcal{H}^{\infty}$. The next proposition follows directly from definition.\\
\begin{prop} \cite{W4}
As operators on $\mathcal{H}^{\infty}$, the operators $\mathcal{L}(\D_\theta), E_P, E_Q,$ and $R$ satisfy:
\begin{equation} \label{eqn:ROp}
	\begin{aligned}
	&E_Q\mathcal{L}(\D_\theta) = \mathcal{L}(\D_\theta)E_P = 0\\
	&R\mathcal{L}(\D_\theta) = I - E_P,\quad \mathcal{L}(\D_\theta)R= I - E_Q\\
	&E_PR = RE_Q = 0
	\end{aligned}
\end{equation}
\end{prop}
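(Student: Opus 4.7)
The plan is to verify each of the six identities in \eqref{eqn:ROp} by decomposing an arbitrary $U \in \mathcal{H}^\infty$ according to \eqref{eqn:Unotn} as $U = \underline{U} + \sum_{m=1}^3 U^m(t,x,\theta_m) + U^{nc}$ and checking the identity separately on each of the three invariant pieces of this splitting. The crucial observation is that all four operators act very cleanly with respect to this splitting: $\mathcal{L}(\D_\theta)$ kills the mean, reduces on a pure $\theta_m$ mode to $L(d\phi_m)\D_{\theta_m}$ (because $\D_{\theta_{m'}}$ annihilates pure $\theta_m$ content for $m' \neq m$), and preserves noncharacteristic content via \eqref{eqn: RUnc}; the projectors $E_P$ and $E_Q$ annihilate $U^{nc}$ and act componentwise on the characteristic parts. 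The purely algebraic input is the set of identities $R_m L(d\phi_m) = I - P_m$, $L(d\phi_m) R_m = I - Q_m$, $P_m R_m = 0$, $R_m Q_m = 0$ established in the proposition just above, together with the fact that $\D_{\theta_m}^{-1}$ is a well-defined mean-zero antiderivative on pure $\theta_m$ modes and commutes with each of the constant operators $P_m, Q_m, R_m$.

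First I would dispatch the two vanishing identities. For $\mathcal{L}(\D_\theta) E_P = 0$: on a pure $\theta_m$ mode one has $\mathcal{L}(\D_\theta)(P_m U^m) = L(d\phi_m)\D_{\theta_m}(P_m U^m) = 0$ since the range of $P_m$ lies in $\ker L(d\phi_m)$, while on $\underline{U}$ the operator $\mathcal{L}(\D_\theta)$ vanishes trivially. For $E_Q \mathcal{L}(\D_\theta) = 0$: the term $\mathcal{L}(\D_\theta)U^m = L(d\phi_m)\D_{\theta_m}U^m$ is a pure $\theta_m$ mode lying in $\im L(d\phi_m) = \ker Q_m$, so $E_{Q_m}$ annihilates it while the other $E_{Q_{m'}}$ kill it by Fourier orthogonality; and $\mathcal{L}(\D_\theta)U^{nc}$ remains noncharacteristic by the explicit formula \eqref{eqn: RUnc}, hence is killed by $E_Q$.

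Next I would verify $R\mathcal{L}(\D_\theta) = I - E_P$. On $U^m$, the left side equals
\begin{equation*}
\D_{\theta_m}^{-1} R_m L(d\phi_m) \D_{\theta_m} U^m = \D_{\theta_m}^{-1}(I - P_m)\D_{\theta_m} U^m = (I - P_m)U^m,
\end{equation*}
which matches $(I - E_P)U^m$ by definition of $E_{P_m}$; on $U^{nc}$ both $R$ and $\mathcal{L}(\D_\theta)^{-1}$ are given by the same explicit Fourier formula so the composition is the identity; and both sides vanish on $\underline{U}$. The dual identity $\mathcal{L}(\D_\theta) R = I - E_Q$ is handled identically, now invoking $L(d\phi_m) R_m = I - Q_m$. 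Finally, $E_P R = 0$ reduces on $U^m$ to $P_m R_m U^m = 0$ via $P_m R_m = 0$ (and commutation with $\D_{\theta_m}^{-1}$), while $E_P$ kills $R(U^{nc})$ because it is noncharacteristic; and $R E_Q = 0$ is symmetric, using $R_m Q_m = 0$ on $U^m$ and the observation that $E_Q U^{nc} = 0$.

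I do not anticipate any genuine obstacle: the proposition is essentially a bookkeeping statement that $R$ is a two-sided partial inverse of $\mathcal{L}(\D_\theta)$ on $\mathcal{H}^\infty$, with $E_P$ projecting onto $\ker \mathcal{L}(\D_\theta)$ and $E_Q$ onto its complementary cokernel piece, and everything decouples across the three summands of the decomposition. The one point worth flagging explicitly is that the small divisor assumption \ref{sdassump} is precisely what is needed to make $\mathcal{L}(\D_\theta)^{-1}$ (and hence $R$) a well-defined operator $\mathcal{H}^\infty \to \mathcal{H}^\infty$ on the noncharacteristic summand, so that the identities are meaningful in the stated function spaces.
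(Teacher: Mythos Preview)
Your proposal is correct and is exactly the verification-by-decomposition that the paper is gesturing at; the paper itself offers no proof beyond the sentence ``The next proposition follows directly from definition.'' Your write-up simply makes explicit the routine check on each summand $\underline{U}$, $U^m$, $U^{nc}$ using the pointwise identities $R_mL(d\phi_m)=I-P_m$, $L(d\phi_m)R_m=I-Q_m$, $P_mR_m=R_mQ_m=0$, so there is nothing to compare.
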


\subsection{Profile equations} \hfill \\

We look for approximate solutions to the system \eqref{eqn:system}  of the form:
\begin{equation} \label{eqn:profile}
	\begin{aligned}
	u^\epsilon_a(t,x) = \sum_{k={1}}^J \epsilon^kU_k(t,x,\frac{\Phi}{\epsilon})
	\end{aligned}
\end{equation}
The next proposition summaries the work of this section.
\begin{prop}
Consider the system \eqref{eqn:system} where we assume $Mr_3 = 0$.\footnote{The assumption $Mr_3=0$ is used to decouple the problem, for example in \eqref{eqn:U1mean}, \eqref{eqn:sigma_1^3}.} If assumption \ref{sdassump} is satisfied and  there is no resonance, there exists an approximate solution  $u^\epsilon_a(t,x) = \sum_{k={1}}^J \epsilon^kU_k(t,x,\frac{\Phi}{\epsilon})$ where $U_k \in \mathcal{H}^\infty(t,x,\theta)$, periodic in $\theta$, and for any $n \in \NN$,  by taking $J$ large enough, one can arrange that $u_a^\eps$ satisfies:
\begin{equation}
|u^\eps - u_a^\eps|_{L^\infty(\Omega_T)} \leq C\eps^n
\end{equation}
where $u^\eps$ is the exact solution to \eqref{eqn:system}.
\end{prop}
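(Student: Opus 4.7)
The plan has two stages: construct the formal WKB expansion $u_a^\eps = \sum_{k=1}^{J} \eps^k U_k(t,x,\Phi/\eps)$ with $U_k \in \mathcal H^\infty$ so that the residual is $O(\eps^{J+1})$, then control $u^\eps - u_a^\eps$ by the uniform Kreiss estimate of \cite{Kre70}, which applies here because ULC is in force throughout this section.

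For the first stage I substitute the ansatz into \eqref{eqn:system} and use the identity $L(\D)(V(t,x,\Phi/\eps)) = (L(\D)V + \eps^{-1}\mathcal L(\D_\theta) V)|_{\theta = \Phi/\eps}$. Matching powers of $\eps$ yields $\mathcal L(\D_\theta) U_1 = 0$ at order $\eps^0$ and
\begin{align}
\mathcal L(\D_\theta) U_{k+1} = -L(\D) U_k - e^{i\theta_3} M U_k \qquad (k \geq 1),
\end{align}
together with boundary conditions $BU_1|_{x_2=0} = G(t,x_1,\theta_0)$, $BU_k|_{x_2=0}=0$ for $k\geq 2$, and $U_k = 0$ for $t<0$. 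Using $R\mathcal L(\D_\theta) = I - E_P$ the first equation forces $U_1 = E_P U_1$, hence
\begin{align}
U_1 = \underline U_1(t,x) + \sum_{m=1}^{3} \sigma_1^m(t,x,\theta_m)\, r_m,
\end{align}
with each $\sigma_1^m$ mean-zero in $\theta_m$. Applying $E_Q$ to the order-$\eps$ equation gives the solvability conditions $E_Q(L(\D)U_1 + e^{i\theta_3} M U_1) = 0$, which by the formulas for $Q_m$ reduce to a symmetric hyperbolic equation for $\underline U_1$ and a scalar transport equation along the group velocity $v_m$ for each $\sigma_1^m$. The hypothesis $M r_3 = 0$ eliminates the self-interaction term $e^{i\theta_3} M r_3 \sigma_1^3$, so the $\sigma_1^3$ equation decouples from the $e^{i\theta_3}$-driven forcing of the remaining components. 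Boundary values for the incoming scalars $\sigma_1^2,\sigma_1^3$ at $x_2 = 0$ are read off from $BU_1|_{x_2=0} = G$ mode-by-mode in the Fourier series in $\theta_0$, by inverting the isomorphism $B|_{\EE^s(\beta_l)}: \EE^s(\beta_l) \to \CC^p$ (guaranteed by ULC together with the decomposition $\EE^s(\beta_l) = \oplus_{m\in\mathcal I}\Ker L(d\phi_m)$); the boundary value of $\underline U_1$ comes from the $\theta_0$-mean of $G$. Then $(I - E_P)U_2 = -R(L(\D)U_1 + e^{i\theta_3}M U_1)$ is determined via $\mathcal L(\D_\theta) R = I - E_Q$, with Assumption \ref{sdassump} and the no-resonance hypothesis ensuring $R$ is well-defined on the noncharacteristic modes generated at this step.

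This pattern iterates cleanly: the characteristic part of $U_{k+1}$ is fixed by transport-plus-boundary data built from $U_k$, while the complementary part is fixed by $R$ applied to the known source. Plugging the truncated $u_a^\eps$ back into the system produces interior and boundary residuals of order $\eps^{J+1}$ in every Sobolev norm. Writing $w_\eps := u^\eps - u_a^\eps$, which satisfies $(L(\D) + e^{i\phi_3/\eps}M)w_\eps$ equal to this residual together with small boundary data and zero initial data, I apply the uniform Kreiss estimate in its $\gamma$-weighted form: since $e^{i\phi_3/\eps}M$ is uniformly bounded in $\eps$, it is absorbed into the left side by choosing $\gamma$ large but independent of $\eps$. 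Sobolev embedding converts the resulting $H^s$ bound on $w_\eps$ to $L^\infty$, and taking $J = J(n)$ sufficiently large yields $|u^\eps - u_a^\eps|_{L^\infty(\Omega_T)} \leq C\eps^n$. I expect the main technical obstacle to be tracking the $(\theta_2,\theta_3)$-mode support produced at each cascade step and verifying that the no-resonance and small-divisor hypotheses keep $R$ bounded as the frequency support of the profiles grows with $k$; the assumption $M r_3 = 0$ is what prevents a runaway interaction between the $\theta_3$-mode of $U_k$ and the oscillatory factor $e^{i\theta_3}$ that would otherwise obstruct the solvability at every order.
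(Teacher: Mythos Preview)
Your construction of the profiles follows the paper's approach closely and is correct. The justification stage, however, has a gap in the passage from the $L^2$ Kreiss estimate to the $L^\infty$ bound.

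The Kreiss estimate you invoke gives only an $L^2$ bound on $w_\eps = u^\eps - u_a^\eps$; you then assert ``Sobolev embedding converts the resulting $H^s$ bound on $w_\eps$ to $L^\infty$,'' but you never produce that $H^s$ bound. The claim that the residual is $O(\eps^{J+1})$ ``in every Sobolev norm'' is not correct as stated: the residual has the form $\eps^{J}\mathcal R(t,x,\Phi(t,x)/\eps)$, and each $(t,x)$-derivative of the composed function brings down a factor of $\eps^{-1}$ from the fast phase, so its $H^s(t,x)$ norm is only $O(\eps^{J-s})$. More to the point, the zero-order coefficient $e^{i\phi_3/\eps}M$ is uniformly bounded in $L^\infty$ but its derivatives are $O(\eps^{-1})$, so commuting $\partial_{t,x_1}$ through the error problem is not free. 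The paper handles this by commuting the error problem with $\eps^{|\alpha|}\partial^\alpha_{t,x_1}$, obtaining $|\partial^\alpha_{t,x_1}w_\eps|_{L^2}\leq C\eps^{J-|\alpha|}$, then recovering $\partial_{x_2}$-derivatives from the equation using invertibility of $B_2$, and finally applying Sobolev embedding; the loss of $|\alpha|$ powers of $\eps$ is compensated by taking $J$ large. You should make this step explicit. A minor additional point: the paper also establishes existence of the exact solution $u^\eps$ via a duality argument, which you do not mention.
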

As proof of the proposition, we first construct $u_a^\eps$.  We observe that the first term in the sum of $u_a^\eps$ satisfies:
\begin{equation}
	\begin{aligned}
L(\partial)\left(\epsilon U_1\left(t,x,\frac{\Phi}{\epsilon}\right)\right)
&=  \partial_t(\epsilon U_1)\left(t,x,\frac{\Phi}{\epsilon}\right) + B_1\D_{x_1}(\epsilon U_1)\left(t,x,\frac{\Phi}{\epsilon}\right)+B_2\D_{x_2}(\epsilon U_1)\left(t,x,\frac{\Phi}{\epsilon}\right)\\  
&+ (\D_t\phi_1+B_1\D_{x_1}\phi_1+B_2\D_{x_2}\phi_1)\D_{\theta_1}U_1\\ 
&+ (\D_t\phi_2+B_1\D_{x_1}\phi_2+B_2\D_{x_2}\phi_2)\D_{\theta_2}U_1\\ 
&+ (\D_t\phi_3+B_1\D_{x_1}\phi_3+B_2\D_{x_2}\phi_3)\D_{\theta_3}U_1\\
	\end{aligned}
\end{equation}
We see that the last three lines is exactly $\CalL(\D_\theta)U_1$. Therefore, plugging \eqref{eqn:profile} into the interior equation in \eqref{eqn:system} gives:
\begin{equation}
	\begin{aligned}
	&\epsilon (L(\D)U_1)(t,x,\frac{\Phi}{\eps}) + (\CalL(\D_\theta)U_1)(t,x,\frac{\Phi}{\eps}) + \epsilon e^{\frac{i\phi_3}{\epsilon}}MU_1(t,x,\frac{\Phi}{\eps})
+ \\ &\eps^2 (L(\D)U_2)(t,x,\frac{\Phi}{\eps}) + \eps(\CalL(\D_\theta)U_2)(t,x,\frac{\Phi}{\eps}) + \eps^2 e^{\frac{i\phi_3}{\epsilon}}MU_2(t,x,\frac{\Phi}{\eps})
+  ... = 0
	\end{aligned}
\end{equation}
The coefficients of same order $\epsilon$ gives the following interior equations of $U_j(t,x,\theta)$
\begin{equation} \label{eqn:interiorprofile}
	\begin{aligned}
	&\CalL(\D_\theta)U_1 = 0\\
	&L(\D)U_1 + \CalL(\D_\theta)U_2 + e^{\frac{i\phi_3}{\epsilon}}MU_1 = 0\\
	&L(\D)U_2 + \CalL(\D_\theta)U_3 + e^{\frac{i\phi_3}{\epsilon}}MU_2 = 0 \\
	&L(\D)U_j + \CalL(\D_\theta)U_{j+1} + e^{\frac{i\phi_3}{\epsilon}}MU_j = 0 \text{ for } j\geq 2
	\end{aligned}
\end{equation}
and for boundary equations on $x_2 = 0,\ \theta_m = \theta_0$, we have:
\begin{equation} \label{eqn:bry}
	\begin{aligned}
	&BU_1 = G\left(t,x_1,\theta_0\right) \\
	&BU_j = 0 \text{ for } j\geq 2.
	\end{aligned}
\end{equation}

Now we apply the operators defined from lemma \ref{decomplem} and the discussion that follows the lemma to above equations.  Applying $E_Q$ to both sides of \ref{eqn:interiorprofile},  we reach the following interior equations: 
\begin{equation}\label{eqn:qequations}
	\begin{aligned}
	&E_Q[L(\D)U_1+e^{i\theta_3}MU_1] = 0\\
	&E_Q[L(\D)U_2+e^{i\theta_3}MU_2] = 0\\
	&E_Q[L(\D)U_j+e^{i\theta_3}MU_j] = 0
	\end{aligned}
\end{equation}

Applying operator $R$ to \ref{eqn:interiorprofile} gives:
\begin{equation} \label{eqn:intequation2}
	\begin{aligned}
	&(I-E_P)U_1 = 0\\
	&(I-E_P)U_2 = -R[L(\D)U_1+e^{i\theta_3}MU_1]\\
	&(I-E_P)U_3 = -R[L(\D)U_2+e^{i\theta_3}MU_2]\\
	\end{aligned}
\end{equation}

As for boundary equations, we first note that the space $\mathbb{E}^s(\beta_l)$ is spanned by $\{r_2,r_3\}$.  Uniform Lopatinski condition holds implies $B: \mathbb{E}^s(\beta_l) \rightarrow \CC^2$ is an isomorphism. Recall that $E_{P_{in}}U_k \in span\{r_2,r_3\} = \mathbb{E}^s(\beta_l)$, thus for $BU_k = H_k$ we have the following boundary equations:
\begin{equation} \label{eqn:bdyeqn}
	\begin{aligned}
	&B\underline{U_k} = \underline{H_k}\\
	&BU_k^*=H_k^*
	\end{aligned}
\end{equation}
 where $H_k$ is read from \eqref{eqn:bry}. We notice that using the definition of projecting operators, the second equation of \eqref{eqn:bdyeqn} can be written as:
\begin{equation} \label{boudaryeqn}
	\begin{aligned} 
	&BE_{P_{in}}U_k = H_k^* - BE_{P_1}U_k - B[(I-E_P)U_k]^*\\
	\end{aligned}
\end{equation}


\subsection{Determining the solutions} \hfill\\

To equations \eqref{eqn:intequation2} with the boundary condition \eqref{boudaryeqn}, we will make use of the following proposition from \cite{W4}:
\begin{prop}\cite{W4} \label{propX}
For $U \in \mathcal{H}^\infty$,  let $E_{P_m} U = \sigma(t,x,\theta_m)r_m$ and let $X_{\phi_m} = \D_{x_2} - \D_{\tau}\omega_m(\beta_l)\D_t - \D_{\eta}\omega_m(\beta_l)\D_{x_t}$ be the transport vector field associated to the phase $\phi_m$. Then 
\begin{equation} \label{eqn:propX}
	\begin{aligned}
	E_{Q_m}L(\D)E_{P_m}U = Q_m(L(\D)\sigma(t,x,\theta_m)r_m) = (X_{\phi_m}\sigma)B_2r_m
	\end{aligned}
\end{equation}
\end{prop}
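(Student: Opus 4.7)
The plan is to reduce the identity to a direct algebraic calculation using the explicit formula $Q_m X = (\ell_m X)B_2 r_m$ and then extract the transport coefficients by differentiating the eigenvalue relation at $\beta_l$. First I would observe that $E_{P_m}U = \sigma(t,x,\theta_m)\,r_m$ depends on the fast variables only through $\theta_m$, and since $L(\D)$ acts only on $(t,x)$ and $r_m$ is a constant vector, $L(\D)(\sigma r_m)$ is again a pure $\theta_m$-mode of mean zero. By definition of $E_{Q_m}$ (select the pure $\theta_m$ component and apply $Q_m$), this gives the first equality $E_{Q_m}L(\D)E_{P_m}U = Q_m(L(\D)(\sigma r_m))$.

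Next I would expand directly. Because $r_m$ is constant,
\[L(\D)(\sigma r_m) = (\D_t\sigma)\,r_m + (\D_{x_1}\sigma)\,B_1 r_m + (\D_{x_2}\sigma)\,B_2 r_m,\]
so applying $Q_m$ and using the normalization $\ell_m B_2 r_m = 1$ yields
\[Q_m L(\D)(\sigma r_m) = \bigl[(\ell_m r_m)\,\D_t\sigma + (\ell_m B_1 r_m)\,\D_{x_1}\sigma + \D_{x_2}\sigma\bigr]\,B_2 r_m.\]
The remaining task is to identify $\ell_m r_m = -\D_\tau\omega_m(\beta_l)$ and $\ell_m B_1 r_m = -\D_\eta\omega_m(\beta_l)$, after which the bracketed expression equals $X_{\phi_m}\sigma$ and the proposition follows.

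These two identities would be obtained by differentiating the eigenvalue relation $(\tau I + B_1\eta + B_2\omega_m(\tau,\eta))R_m(\tau,\eta) = 0$, valid on $\Gamma^+_\delta(\beta_l)$, with respect to $\tau$ and $\eta$ at $\beta_l$. Differentiating in $\tau$ produces
\[(I + B_2\,\D_\tau\omega_m)\,R_m(\beta_l) + L(d\phi_m)\,\D_\tau R_m(\beta_l) = 0.\]
Left-multiplying by $\ell_m$ and using $\ell_m L(d\phi_m) = 0$ kills the second term, leaving $\ell_m R_m(\beta_l) = -\D_\tau\omega_m(\beta_l)\cdot\ell_m B_2 R_m(\beta_l)$; after dividing through by $|R_m(\beta_l)|$ and invoking $\ell_m B_2 r_m = 1$ this gives $\ell_m r_m = -\D_\tau\omega_m(\beta_l)$. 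The identity for $\ell_m B_1 r_m$ is obtained in exactly the same way by differentiating in $\eta$.

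There is no deep obstacle here; the result is a standard profile-equation computation in weakly nonlinear geometric optics. The only point worth emphasizing is the content of the last paragraph, which encodes the physical fact that $(\D_\tau\omega_m,\D_\eta\omega_m)$ is the group velocity associated with the phase $\phi_m$: projecting $L(\D)$ onto the $r_m$-eigendirection via $\ell_m$ automatically selects precisely this combination of derivatives, which is why the transport field $X_{\phi_m}$ appears with exactly these coefficients.
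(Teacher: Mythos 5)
Your proof is correct, and it is the standard argument behind this quoted result: the paper itself gives no proof (it cites \cite{W4}), and the computation there is exactly this one — reduce to $Q_m(L(\D)\sigma r_m)$ using that $L(\D)$ preserves pure $\theta_m$-modes, expand with $Q_mX=(\ell_mX)B_2r_m$ and $\ell_mB_2r_m=1$, and identify $\ell_m r_m=-\D_\tau\omega_m(\beta_l)$, $\ell_m B_1 r_m=-\D_\eta\omega_m(\beta_l)$ by differentiating $(\tau I+B_1\eta+B_2\omega_m(\tau,\eta))R_m(\tau,\eta)=0$ and killing the $\D_\tau R_m$, $\D_\eta R_m$ terms with $\ell_m L(d\phi_m)=0$. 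The differentiation is legitimate because $\beta_l\in\mathcal{H}$, where the eigenvalues are simple so $\omega_m$, $R_m$ are smooth (analytic in $\tau$) near $\beta_l$; note also that $\D_{x_t}$ in the statement is a typo for $\D_{x_1}$, which your computation implicitly corrects.
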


First, we consider the leading profile. 
\\
Since $(I-E_P)U_1 = 0$, for some scalar profiles $\sigma$ we write:
\begin{equation}
U_1 = \underline{U_1}+\sigma_1^1(t,x,\theta_1)r_1 + \sigma_1^2(t,x,\theta_2)r_2 + \sigma_1^3(t,x,\theta_3)r_3
\end{equation}
\\

1. Determination of $\underline{U_1}$. \\

Taking the mean on both sides of \ref{eqn:qequations}, we have: $E_{Q_0}[L(\D)U_1+e^{i\theta_3}MU_1] = 0$. We note that the mean $L(\D)U_1$ is $L(\D)\underline{U_1}$ and the mean of $e^{i\theta_3}MU_1$ is $\underline{e^{i\theta_3}M\sigma_1^3r_3}$. Thus, using \ref{eqn:bdyeqn} we have:
\begin{equation} \label{eqn:U1mean}
	\begin{aligned} 
	&L(\D)\underline{U_1}+\underline{e^{i\theta_3}M\sigma_1^3r_3} = 0\\
	&B\underline{U_1} = \underline{G}
	\end{aligned}
\end{equation}
We note that this has the coupling involving $\sigma_1^3r_3$.  So the assumption that   $Mr_3 = 0$ 
decouples this problem and will yield a solution for $\underline{U_1}$. 
\\

2. Determining the leading profile $U_1$\\

Using \ref{eqn:qequations}, \ref{eqn:propX}, $U = E_PU$, and the assumption of no resonance we have:
\begin{equation} \label{eqn:interiortransport}
	\begin{aligned} 
	&E_{Q_1}[L(\D)U_1+e^{i\theta_3}MU_1] = X_{\phi_1}\sigma_1^1B_2r_1 = 0\\
	&E_{Q_2}[L(\D)U_1+e^{i\theta_3}MU_1] = X_{\phi_2}\sigma_1^2B_2r_2 = 0\\
	&E_{Q_3}[L(\D)U_1+e^{i\theta_3}MU_1] = X_{\phi_3}\sigma_1^3B_2r_3 + Q_3e^{i\theta_3}M\sigma_1^3r_3 + Q_3e^{i\theta_3}M\underline{U_1}= 0
	\end{aligned}
\end{equation}
From the boundary equation \eqref{eqn:bdyeqn}, we have:
\begin{equation} \label{eqn:boundaryU1}
	\begin{aligned} 
	&B\sigma_1^1r_1 + B\sigma_1^2r_2 + B\sigma_1^3r_3 = G^*
	\end{aligned}
\end{equation}
To solve these equations, we note that \eqref{eqn:interiortransport} are homogeneous transport equations with constant coefficients. Thus, $\sigma_1^1$ and $\sigma_1^2$ are constant  on the integral curve of $X_{\phi_k}$ for $k =1,2$. Since $\phi_1$ is outgoing, $\sigma_1^1 = 0$ at $t < 0$, we have $E_{P_1}U_1 = \sigma_1^1 = 0$. \\
To solve for $\sigma_1^2$, we need a boundary equation. Using $B: \text{span} \{r_2,r_3\} \rightarrow \CC^2$ is an isomorphism,  the boundary equation \ref{eqn:boundaryU1} becomes:
\begin{equation} 
	\begin{aligned} 
	B\sigma_1^2r_2 + B\sigma_1^3r_3 = G^* = c_1(t,x_1,\theta_0)Br_2+c_2(t,x_1,\theta_0)Br_3
	\end{aligned}
\end{equation}
which implies:
\begin{equation} 
	\begin{aligned} 
	&\sigma_1^2\mid_{x_2 = 0,\ \theta_2 = \theta_0} = c_1 \\
	&\sigma_1^3\mid_{x_2 = 0,\ \theta_3 = \theta_0} = c_2
	\end{aligned}
\end{equation}
With those boundary equations, we have uniquely determined $\sigma_1^2$. For the equation of $\sigma_1^3$ in \ref{eqn:interiortransport}, using $Q_mX = l_mXB_2r_m$, we rewrite this equation as: 
\begin{equation}
X_{\phi_1}\sigma_1^3 + e^{i\theta_3}\sigma_1^3\ell_mMr_3 + e^{i\theta_3}\ell_mM\underline{U_1} = 0
\end{equation} 
and we have the following coupling equations for $\sigma_1^3$:
\begin{equation} \label{eqn:sigma_1^3}
	\begin{aligned} 
	&\begin{cases}
	&X_{\phi_1}\sigma_1^3 + e^{i\theta_3}\sigma_1^3\ell_mMr_3 + e^{i\theta_3}\ell_mM\underline{U_1} = 0 \\
	&\sigma_1^3\mid_{x_2 = 0,\ \theta = \theta_3} = c_2\\
	\end{cases}
	\\
	&\begin{cases}
	&L(\D)\underline{U_1}+\underline{e^{i\theta_3}M\sigma_1^3r_3} = 0 \\
	&B\underline{U_1}|_{x_2=0} = \underline{G}
	\end{cases}
	\end{aligned}
\end{equation}
To decouple the problem, we again  use the assumption that $Mr_3 = 0$. Then $\sigma_1^3$ is uniquely determined. \\
\\

3. Determining $U_2$\\

Next, we consider $U_2$. From \eqref{eqn:intequation2}, we have:
\begin{equation} 
	\begin{aligned} 
	U_2 = E_PU_2 + (I - E_PU_2) = \underline{U_2} + \sigma_2^1r_1 + \sigma_2^2r_2 + \sigma_2^3r_3 -R[L(\D)U_{1}+e^{i\theta_3}MU_{1}]
	\end{aligned}
\end{equation}
We observe that $(I-E_P)U_2 = -R(L(\D)U_{1}+e^{i\theta_3}MU_{1})$ is determined by previous terms and is known.  Plugging in this decomposition into the equation $E_Q[L(\D)U_2+e^{i\theta_3}MU_2] = 0$, we have:
\begin{equation} 
	\begin{aligned} 
	E_Q(L(\D)E_PU_2 + e^{i\theta_3}ME_PU_2) &= -E_Q(L(\D)(I-E_P)U_2+e^{i\theta_3}M(I-E_P)U_2) \\
	\end{aligned}
\end{equation}
This yields four equations for $U_2$: 
\begin{equation} \label{eqn:U2underline}
	\begin{aligned} 
	E_{Q_0}(L(\D)E_PU_2 + e^{i\theta_3}ME_PU_2) &= -E_{Q_0}[L(\D)(I-E_P)U_2+e^{i\theta_3}M(I-E_P)U_2] \\
	&= -E_{Q_0}[(I-E_P)U_2] - E_{Q_0}[e^{i\theta_3}M(I-E_P)U_2]
	\end{aligned}
\end{equation}
\begin{equation} \label{eqn:U2terms}
	\begin{aligned} 
	X_{\phi_1}\sigma_2^1B_2r_1 &= -E_{Q_1}[L(\D)(I-E_P)U_2+e^{i\theta_3}M(I-E_P)U_2]\\
	X_{\phi_2}\sigma_2^2B_2r_2 &= -E_{Q_2}[L(\D)(I-E_P)U_2+e^{i\theta_3}M(I-E_P)U_2]\\
	X_{\phi_3}\sigma_2^3B_2r_3 &+ Q_3e^{i\theta_3}M\sigma_2^3r_3 + 
	Q_3e^{i\theta_3}M\underline{U_2} = 
	\\& -E_{Q_3}[L(\D)(I-E_P)U_2+e^{i\theta_3}M(I-E_P)U_2]
	\end{aligned}
\end{equation}

We observe that the first term in \eqref{eqn:U2underline}, $-E_{Q_0}[(I-E_P)U_2]$ picks out the mean of $-[(I-E_P)U_2] = R(L(\D)U_{1}+e^{i\theta_3}MU_{1})$. By the definition of $R$, this term has mean zero.  For the second term, we have \\
$- E_{Q_0}[e^{i\theta_3}M(I-E_P)U_2] = \underline{e^{i\theta_3}MR[L(\D)U_{1}+e^{i\theta_3}MU_{1}]} = \underline{e^{i\theta_3}MR[L(\D)\sigma_1^3r_3+e^{i\theta_3}M(\underline{U_1}+\sigma_1^3r_3)]}$

We note that all equations in \eqref{eqn:U2terms} are evaluated at vector $B_2r_m$, which makes them scalar equations.  We also observe that the right hand sides are known and we can use similar techniques as before. For the first equation in \eqref{eqn:U2terms} involving $\sigma_2^1$, the first term on the right hand side satisfies:
\begin{equation}-E_{Q_1}[L(\D)(I-E_P)U_2] = -E_{Q_1}[L(\D)(-R(L(\D)U_{1}+e^{i\theta_3}MU_{1})] = -E_{Q_1}[L(\D)(-R(L(\D)U_{1})]
\end{equation}
Since $U_1$ has no oscillation in $\theta_1$,  above equation equals to zero. The second term on the right hand side of the equation \eqref{eqn:U2terms} involving $\sigma_2^1$ is $-E_{Q_1}[e^{i\theta_3}M(-R(L(\D)U_{1})] = 0$ since there is no resonance. Thus, the first equation of \eqref{eqn:U2terms} becomes 
\begin{equation}
X_{\phi_1}\sigma_2^1B_2r_1 = 0
\end{equation}
Since $X_{\phi_1}$ is outgoing and $\sigma_2^1 = 0$ when $t < 0$, we have $\sigma_2^1 = 0$. \\
The equation involving $\sigma_2^2$ in \eqref{eqn:U2terms} is an inhomogenenous transport equation with constant coefficients, so it has unique solutions provided a boundary condition.  Boundary condition \eqref{eqn:bry} implies $BE_PU_2 + B(I-E_P)U_2 = 0$.  Using $B\underline{U_2} = 0$ and isomorphism $B: \text{span} \{r_2,r_3\} \rightarrow \CC^2$, we have:
\begin{equation}
B\sigma_2^2r_2 + B\sigma_2^3r_3 = -B(I-E_P)U_2 = c_2^2Br_2 + c_2^3Br_3
\end{equation}
which provides boundary equations $\sigma_2^2 = c_2^2$, $\sigma_2^3 = c_2^3$ and thus gives the unique solution $\sigma_2^2$. 
\\
Similar to the case of $U_1$,  equations involving $\sigma_k^3$ result in the couplings with the following equations involving $\underline{U_k}$:
\begin{equation} 
	\begin{aligned} 
	\begin{cases}
	&X_{\phi_3}\sigma_2^3B_2r_3 + Q_3e^{i\theta_3}M\sigma_2^3r_3 + 
	 Q_3e^{i\theta_3}M\underline{U_2} 
	\\& = -E_{Q_3}[L(\D)(I-E_P)U_2+e^{i\theta_3}M(I-E_P)U_2]\\
	&\sigma_2^3|_{x_2=0, \theta_3 = \theta_0} = c_2^3
	\end{cases}
	\end{aligned}
\end{equation}
\begin{equation} 
	\begin{aligned} 
	&\begin{cases}
	&L(\D)\underline{U_2}+\underline{e^{i\theta_3}M\sigma_2^3r_3} = \underline{e^{i\theta_3}MR[L(\D)U_{1}+e^{i\theta_3}MU_{1}]}\\
	&B\underline{U_2}|_{x_2=0} = 0
	\end{cases}
	\end{aligned}
\end{equation}
which follows from \eqref{eqn:U2underline}.  Similarly, to decouple the problem, assuming $Mr_3 = 0$ gives $\underline{U_2} = 0$ and thus the solution of $\sigma_k^3$.\\

4. Determining the remaining terms of the profile.\\

We consider $U_k$ for $k \geqslant 2$. First from \eqref{eqn:intequation2}, we have:
\begin{equation} 
	\begin{aligned} 
	U_k = E_PU_k + (I - E_PU_k) = \underline{U_k} + \sigma_k^1r_1 + \sigma_k^2r_2 + \sigma_k^3r_3 -R[L(\D)U_{k-1}+e^{i\theta_3}MU_{k-1}]
	\end{aligned}
\end{equation}
We again observe that $(I-E_P)U_k = -R(L(\D)U_{k-1}+e^{i\theta_3}MU_{k-1})$ is determined by previous terms and is known.  Plugging this into $E_Q[L(\D)U_k+e^{i\theta_3}MU_k] = 0$, we have:
\begin{equation} 
	\begin{aligned} 
	E_Q(L(\D)E_PU_k + e^{i\theta_3}ME_PU_k) &= -E_Q(L(\D)(I-E_P)U_k+e^{i\theta_3}M(I-E_P)U_k) \\
	\end{aligned}
\end{equation}
This gives us four equations for $U_k$:
\begin{equation} \label{eqn:Ukunderline}
	\begin{aligned} 
	E_{Q_0}(L(\D)E_PU_k + e^{i\theta_3}ME_PU_k) &= -E_{Q_0}[L(\D)(I-E_P)U_k+e^{i\theta_3}M(I-E_P)U_k] \\
	\end{aligned}
\end{equation}
\begin{equation} \label{eqn:Ukterms}
	\begin{aligned} 
	X_{\phi_1}\sigma_k^1B_2r_1 &= -E_{Q_1}[L(\D)(I-E_P)U_k+e^{i\theta_3}M(I-E_P)U_k]\\
	X_{\phi_2}\sigma_k^2B_2r_2 &= -E_{Q_2}[L(\D)(I-E_P)U_k+e^{i\theta_3}M(I-E_P)U_k]\\
	X_{\phi_3}\sigma_k^3B_2r_3 &+ Q_3e^{i\theta_3}M\sigma_k^3r_3 + 
	Q_3e^{i\theta_3}M\underline{U_k} = 
	\\& -E_{Q_3}[L(\D)(I-E_P)U_k+e^{i\theta_3}M(I-E_P)U_k]
	\end{aligned}
\end{equation}
By the same argument provided for $U_2$,  \eqref{eqn:Ukunderline} implies:
\begin{equation} 
	\begin{aligned} \label{eqn:Ukunderline2}
	L(\D)\underline{U_k} + \underline{e^{i\theta_3}M\sigma_k^3r_3} &= \underline{e^{i\theta_3}MR[L(\D)U_{k-1}+e^{i\theta_3}MU_{k-1}]}
	\end{aligned}
\end{equation}
and the first equation in \eqref{eqn:Ukterms} implies:
\begin{equation}
	\begin{aligned} 
	X_{\phi_1}\sigma_k^1B_2r_1 &= 0\\
	\end{aligned}
\end{equation}
Using $X_{\phi_1}$ is outgoing and $\sigma_k^1 = 0$ when $t < 0$, we have $\sigma_k^1 = 0$.\\
The equation involving $\sigma_k^2$ is also an inhomogenenous transport equation with constant coefficients, thus has unique solutions provided boundary conditions. Boundary condition \eqref{eqn:bry} implies $BE_PU_k + B(I-E_P)U_k = 0$.  Using $B\underline{U_k} = 0$ and isomorphism $B: \text{span} \{r_2,r_3\} \rightarrow \CC^2$, we get:
\begin{equation}
B\sigma_k^2r_2 + B\sigma_k^3r_3 = -B(I-E_P)U_k = c_k^2Br_2 + c_k^3Br_3
\end{equation}
which provides boundary equations $\sigma_k^2 = c_k^2$, $\sigma_k^3 = c_k^3$ and gives the unique solution $\sigma_k^2$. 
\\
The equations involving $\sigma_k^3$ result in the couplings with the following equations involving $\underline{U_k}$:
\begin{equation} 
	\begin{aligned} 
	\begin{cases}
	&X_{\phi_3}\sigma_k^3B_2r_3 + Q_3e^{i\theta_3}M\sigma_k^3r_3 + 
	 Q_3e^{i\theta_3}M\underline{U_k} 
	\\& = -E_{Q_3}[L(\D)(I-E_P)U_k+e^{i\theta_3}M(I-E_P)U_k]\\
	&\sigma_k^3|_{x_2=0, \theta_3 = \theta_0} = c_k^3
	\end{cases}
	\end{aligned}
\end{equation}
\begin{equation} 
	\begin{aligned} 
	&\begin{cases}
	&L(\D)\underline{U_k} + \underline{e^{i\theta_3}M\sigma_k^3r_3} = \underline{e^{i\theta_3}MR[L(\D)U_{k-1}+e^{i\theta_3}MU_{k-1}]}\\
	&B\underline{U_k}|_{x_2 = 0} = 0
	\end{cases}
	\end{aligned}
\end{equation}
which follows from \eqref{eqn:Ukunderline2} and \eqref{eqn:bry}. Similarly, to decouple the problem, assuming $Mr_3 = 0$ gives the solution of $\sigma_k^3$. Now we have determined all unknown in \ref{eqn:profile} and obtain a solution of the desired form.

\subsection{Justifying WKB solutions} \label{justification} \hfill \\

We here justify the solution \ref{eqn:profile} we constructed above by showing that it is close to the real solution as $\eps$ tends to $0$. We write
\begin{equation} \label{eqn:profile'}
u^\eps_a = \sum_{k=1}^N \epsilon^k U_k(t,x,\frac \Phi \eps) 
\end{equation}
as the approximate solution we constructed. Let $u$ be the exact solution to the system. Plugging in $u - u_a^\eps$ to the system, we have:
\begin{equation} \label{3.2}
\begin{aligned}
&L(\D)(u-u_a^\eps) + e^{i\phi_3/\epsilon}M(u-u_a^\eps) = -\eps^N[(L(\D)+e^{i\phi_3/\epsilon}M)U_N(t,x,\theta)]|_{\theta=\Phi/\epsilon}\\
&B(u-u_a^\eps)= 0 \text{ on boundary}
\end{aligned}
\end{equation}
where $L(\D) := \D_t + B_1\D_{x_1} + B_2\D_{x_2}$. 

We need the following proposition:
\begin{prop} \label{prop3.3}
Let $f$ be periodic in $\theta$ and $f(t,x,\theta)\in H^m(t,x,\theta)$ for $m\geq \frac{n+1}{2}$ $((t,x)\in \RR^n)$, then $|f(t,x,\frac{\phi(t,x)}{\epsilon})|_{L^2(t,x)} \leq C|f(t,x,\theta)|_{H^1(t,x,\theta)}$
\end{prop}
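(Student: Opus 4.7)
The plan is to reduce the estimate to a pointwise-in-$(t,x)$ one-dimensional Sobolev embedding in the periodic variable $\theta$, and then integrate. First, I would observe that since $f(t,x,\theta)$ is $2\pi$-periodic in $\theta$, for each fixed $(t,x)$ the slice $\theta \mapsto f(t,x,\theta)$ lives on the one-dimensional torus $\TT_\theta$. The classical one-dimensional Sobolev embedding $H^1(\TT_\theta) \hookrightarrow L^\infty(\TT_\theta)$ then yields a universal constant $C_0$ such that
\begin{equation*}
\sup_{\theta \in \TT_\theta} |f(t,x,\theta)| \leq C_0 \left( \int_{\TT_\theta} \bigl(|f(t,x,\theta)|^2 + |\D_\theta f(t,x,\theta)|^2\bigr)\, d\theta \right)^{1/2}.
\end{equation*}

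Next, evaluating at the particular value $\theta = \phi(t,x)/\eps$ and using the above supremum bound, I would conclude
\begin{equation*}
\bigl|f(t,x,\phi(t,x)/\eps)\bigr|^2 \leq C_0^2 \int_{\TT_\theta} \bigl(|f|^2 + |\D_\theta f|^2\bigr)(t,x,\theta)\, d\theta.
\end{equation*}
This is the heart of the argument: it converts an evaluation on the rough submanifold $\{\theta = \phi(t,x)/\eps\}$ of $(t,x,\theta)$-space into an integrand depending only on $\theta$, and, crucially, the bound is independent of $\eps$ and of the particular phase $\phi$.

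Finally, I would integrate both sides in $(t,x) \in \RR^n$ and apply Fubini to exchange the order of integration, which yields
\begin{equation*}
\bigl|f(t,x,\phi/\eps)\bigr|_{L^2(t,x)}^2 \leq C_0^2 \bigl(|f|_{L^2(t,x,\theta)}^2 + |\D_\theta f|_{L^2(t,x,\theta)}^2\bigr) \leq C_0^2\,|f|_{H^1(t,x,\theta)}^2,
\end{equation*}
completing the proof with $C = C_0$. There is essentially no obstacle here: the one-dimensional Sobolev embedding on $\TT_\theta$ is the only nontrivial ingredient, and the fact that the exponential map $\theta \mapsto \theta$ is smooth and the phase $\phi/\eps$ merely selects a value makes the rescaling by $\eps$ harmless. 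I note that the stated hypothesis $m \geq (n+1)/2$ is stronger than what is actually needed for the $H^1$ conclusion written in the statement; such a hypothesis would be natural if one instead wanted a pointwise $L^\infty(t,x)$ bound via the full Sobolev embedding $H^m(\RR^n \times \TT_\theta) \hookrightarrow L^\infty$ on the $(n+1)$-dimensional product.
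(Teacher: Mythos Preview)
Your proof is correct and is essentially the same argument as the paper's: both rest on the one-dimensional Sobolev embedding $H^1(\TT_\theta)\hookrightarrow L^\infty(\TT_\theta)$ applied fiberwise in $(t,x)$ and then integrated. The only difference is packaging---you invoke the embedding as a black box, whereas the paper writes it out explicitly via the Fourier expansion $f=\sum_n f_n(t,x)e^{in\theta}$, a triangle inequality, and Cauchy--Schwarz with weights $n$ and $1/n$.
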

\begin{proof}
Writing
\begin{equation}
f(t,x,\theta) = \sum_1^\infty f_n(t,x)e^{i\theta/\eps}, \quad f(t,x,\frac{\phi(t,x)}{\eps}) = \sum_1^\infty f_n(t,x)e^{i\phi(t,x)/\eps}
\end{equation}
we have
\begin{align}
\left|\sum_1^\infty f_n(t,x)e^{i\phi(t,x)/\eps} \right|_{L^2(t,x)} &\leq \sum_1^\infty |f_n(t,x)|_{L^2(t,x)}
\\&\leq \left|\left(|f_n(t,x)|_{L^2(t,x)}\cdot n\right)\right|_{\ell^2(n)}\left| \frac{1}{n}\right|_{\ell^2(n)}
\\&\leq C|f(t,x,\theta)|_{H^1(t,x,\theta)}
\end{align}
\end{proof}
We need the following estimate from \cite{Kre70}:


\begin{prop} \cite{Kre70}
There exists a constant $\gamma_0 $ such that for all $\gamma \geq \gamma_0$, the system
\begin{align}
&L(\D)u = f\\
&Bu=g
\end{align}
satisfies:
\begin{equation} \label{cg10estimate}
\gamma |e^{-\gamma t}u|_{L^2(t,x)}^2 + |e^{-\gamma t}u|_{x_2 = 0}|_{L^2(t,x_1)}^2 \leq C\left(\frac{1}{\gamma}|e^{-\gamma t}f|^2_{L^2(t,x)}+ |e^{-\gamma t}g|^2_{L^2(t,x_1)}\right).
\end{equation}
\end{prop}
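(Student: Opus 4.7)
The plan is to reduce the problem to a family of ODEs in $x_2$ parameterized by dual frequencies, construct a Kreiss symmetrizer, and recover the physical estimate via Plancherel; this is the classical argument from \cite{Kre70}.

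First, set $u_\gamma := e^{-\gamma t}u$, $f_\gamma := e^{-\gamma t}f$, and $g_\gamma := e^{-\gamma t}g$, so that $(L(\D)+\gamma)u_\gamma = f_\gamma$ with $Bu_\gamma|_{x_2=0} = g_\gamma$. Taking a Fourier transform in $(t,x_1)$ with dual variables $(\sigma,\eta)$ and writing $\tau=\sigma-i\gamma$, the weighted system becomes the ODE family
\begin{equation*}
\D_{x_2}\hat{u_\gamma} - i\mathcal{A}(\zeta)\hat{u_\gamma} = B_2^{-1}\hat{f_\gamma}, \qquad B\hat{u_\gamma}|_{x_2=0} = \hat{g_\gamma},
\end{equation*}
parameterized by $\zeta=(\tau,\eta)\in\Xi$, with $\hat{u_\gamma}(\cdot,\zeta)\in L^2(\RR_+)$ the natural solution space. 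The theorem reduces to a pointwise-in-$\zeta$ bound
\begin{equation*}
\gamma\int_0^\infty|\hat{u_\gamma}|^2\,dx_2 + |\hat{u_\gamma}(0)|^2 \leq C\Bigl(\tfrac{1}{\gamma}\int_0^\infty|\hat{f_\gamma}|^2\,dx_2 + |\hat{g_\gamma}|^2\Bigr),
\end{equation*}
uniform in $\zeta$ and in $\gamma\geq\gamma_0$; Plancherel in $(\sigma,\eta)$ then yields \eqref{cg10estimate}.

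Second, I would construct a Hermitian, $0$-homogeneous symbol $K(\zeta)$ on $\Xi$ and constants $c,C_0>0$ such that
\begin{equation*}
\re \bigl(K(\zeta)\, i\mathcal{A}(\zeta)\bigr)\geq c\gamma\, I \quad \text{and} \quad K(\zeta) + C_0 B^{*}B \geq c\, I.
\end{equation*}
The construction is local: at each $\zeta_0$ on the unit sphere in $\Xi$, work in a small conic neighborhood $\Gamma^+_\delta(\zeta_0)$ as in \eqref{conicnbhd} and put $i\mathcal{A}(\zeta)$ into a block form. On elliptic blocks ($\gamma>0$ with eigenvalues off the imaginary axis) take $K=\diag(-I_{\mathrm s},+I_{\mathrm u})$ on stable/unstable subspaces. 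On non-glancing hyperbolic blocks, strict hyperbolicity (Assumption \ref{stricthyperbolicity}) and the splitting $(r_+,r_-)$ from \eqref{s6} give smooth diagonalization, and one uses the sign choice $K=\diag(-I_p,+I_{N-p})$ coming from the $\mathcal{I}/\mathcal{O}$ decomposition of Definition \ref{defnincoming}. On glancing blocks one employs Kreiss's block-structure reduction to explicit Jordan-like normal forms for which a symmetrizer with polynomial weights is built algebraically. The local pieces are then patched by a smooth conic partition of unity. The first inequality uses the eigenvalue-sign bounds \eqref{s5}; the second is where ULC enters, since on $\EE^s(\zeta)$ the matrix $B$ is an isomorphism onto $\CC^p$ by Definition \ref{ulcdefn}, so on the negative-signature incoming subspace one has $-\langle K v,v\rangle + C_0|Bv|^2 \gtrsim |v|^2$.

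Third, apply the symmetrizer: with $M:=i\mathcal{A}(\zeta)$,
\begin{equation*}
\tfrac{d}{dx_2}\langle K\hat{u_\gamma},\hat{u_\gamma}\rangle = \langle(KM+M^{*}K)\hat{u_\gamma},\hat{u_\gamma}\rangle + 2\re \langle KB_2^{-1}\hat{f_\gamma},\hat{u_\gamma}\rangle.
\end{equation*}
Integrating from $0$ to $\infty$ (using $\hat{u_\gamma}(\infty,\zeta)=0$), invoking the two properties of $K$, and applying Cauchy--Schwarz with $\gamma$-weight $2ab\leq \gamma^{-1}a^2+\gamma b^2$ to the forcing term, the resulting $\gamma\int|\hat{u_\gamma}|^2$ contribution can be absorbed into the left-hand side provided $\gamma\geq\gamma_0$ is large enough, giving the pointwise estimate and hence \eqref{cg10estimate}. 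The hard part is the symmetrizer construction in the glancing region: there the eigenvalues of $i\mathcal{A}(\zeta)$ coalesce on the imaginary axis as $\gamma\to 0$, so $\mathcal{A}$ is not smoothly diagonalizable and the naive signature matrix from the hyperbolic/elliptic recipe is unavailable. Kreiss's insight is to reduce $i\mathcal{A}$ near such a point to explicit Jordan-like blocks and build an algebraic symmetrizer whose off-diagonal entries carry carefully chosen powers of an expansion parameter; checking that this construction remains compatible with ULC at glancing frequencies is the delicate technical core of the argument, and is precisely what forces $\gamma_0$ to be taken strictly positive rather than zero.
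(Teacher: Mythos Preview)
The paper does not give its own proof of this proposition; it simply quotes the estimate from \cite{Kre70} and uses it as a black box. Your sketch is a faithful outline of Kreiss's original symmetrizer argument, so there is nothing to compare against here. One minor remark: for the pure constant-coefficient problem with ULC the symmetrizer estimate actually holds for all $\gamma>0$ with constants independent of $\gamma$; the threshold $\gamma_0$ only becomes genuinely necessary once lower-order terms (as in the next proposition of the paper) or variable coefficients are introduced, not because of the glancing construction itself.
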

\begin{prop}
The estimate \eqref{cg10estimate} holds for the system:
\begin{align}
&L(\D)u + \mathcal{D}u = f\\
&Bu=g
\end{align}
where $\mathcal{D}$ is any bounded function.
\end{prop}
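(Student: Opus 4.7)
The plan is a straightforward perturbation argument: treat $\mathcal{D}u$ as an additional forcing term, apply the unperturbed Kreiss estimate, and then absorb the resulting $|u|_{L^2}$ contribution into the left-hand side by taking $\gamma$ sufficiently large. No new analytical ingredients beyond the estimate \eqref{cg10estimate} itself should be needed.

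First, I would rewrite the system as $L(\D)u = f - \mathcal{D}u$ with $Bu = g$, and apply \eqref{cg10estimate} to this reformulated problem. This gives, for $\gamma \geq \gamma_0$,
\begin{equation}
\gamma |e^{-\gamma t}u|_{L^2(t,x)}^2 + |e^{-\gamma t}u|_{x_2=0}|_{L^2(t,x_1)}^2 \leq C\left(\frac{1}{\gamma}|e^{-\gamma t}(f-\mathcal{D}u)|^2_{L^2(t,x)} + |e^{-\gamma t}g|^2_{L^2(t,x_1)}\right).
\end{equation}
The elementary inequality $|a-b|^2 \leq 2|a|^2 + 2|b|^2$ together with the boundedness hypothesis $|\mathcal{D}|_{L^\infty} \leq M$ yields
\begin{equation}
|e^{-\gamma t}(f-\mathcal{D}u)|^2_{L^2(t,x)} \leq 2|e^{-\gamma t}f|^2_{L^2(t,x)} + 2M^2 |e^{-\gamma t}u|^2_{L^2(t,x)}.
\end{equation}

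Substituting this bound back yields a term $\frac{2CM^2}{\gamma}|e^{-\gamma t}u|^2_{L^2(t,x)}$ on the right, which I would absorb into the $\gamma|e^{-\gamma t}u|^2_{L^2(t,x)}$ on the left by imposing $\gamma \geq \gamma_1$ for some $\gamma_1 \geq \gamma_0$ chosen so that $\frac{2CM^2}{\gamma_1} \leq \frac{\gamma_1}{2}$ (for instance $\gamma_1 := \max(\gamma_0, 2\sqrt{C}\,M)$). After absorption we obtain
\begin{equation}
\frac{\gamma}{2}|e^{-\gamma t}u|_{L^2(t,x)}^2 + |e^{-\gamma t}u|_{x_2=0}|_{L^2(t,x_1)}^2 \leq \frac{2C}{\gamma}|e^{-\gamma t}f|^2_{L^2(t,x)} + C|e^{-\gamma t}g|^2_{L^2(t,x_1)},
\end{equation}
which is \eqref{cg10estimate} with new constants $C'$ and threshold $\gamma_1$ in place of $\gamma_0$.

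There is no genuine obstacle here; the only subtlety is that the new threshold $\gamma_1$ depends on $|\mathcal{D}|_{L^\infty}$. This is precisely the point flagged in the introduction: when one attempts to apply this same strategy to the oscillatory term $\mathcal{D}(\phi_{in}/\eps)$ in \eqref{i5}, the bound $|\mathcal{D}(\phi_{in}/\eps)|_{L^\infty}$ is $O(1)$ uniformly in $\eps$, so $\gamma_1$ can be chosen independent of $\eps$, but the derivatives of $\mathcal{D}(\phi_{in}/\eps)$ are $O(1/\eps)$, so the same perturbation trick cannot be iterated to control higher-order Sobolev norms of $u$ uniformly in $\eps$ (which is why the more refined singular system analysis of \cite{W4} is required). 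For the present proposition, however, bounded $\mathcal{D}$ is all that is assumed, and the absorption argument above suffices.
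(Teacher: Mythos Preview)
Your proof is correct and follows essentially the same absorption argument as the paper: rewrite the equation as $L(\D)u = f - \mathcal{D}u$, apply the unperturbed Kreiss estimate, and absorb the $\frac{C}{\gamma}|\mathcal{D}|_{L^\infty}|e^{-\gamma t}u|^2_{L^2}$ term into the left-hand side for $\gamma$ large. Your version is in fact slightly more explicit about the choice of the new threshold $\gamma_1$ and its dependence on $|\mathcal{D}|_{L^\infty}$.
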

\begin{proof}
We can rewrite the first equation as $L(\partial)u  = f -  \mathcal{D}u$. Then applying \eqref{cg10estimate} to $(L(\D),B)$ gives:
\begin{align}
\gamma |e^{-\gamma t}u|_{L^2(t,x)}^2 + &|e^{-\gamma t}u|_{x_2 = 0}|_{L^2(t,x_1)}^2 \leq C\left(\frac{1}{\gamma}|e^{-\gamma t}(f -  \mathcal{D})u|^2_{L^2(t,x)}+ |e^{-\gamma t}g|^2_{L^2(t,x_1)}\right)
\\&\leq C\left(\frac{1}{\gamma}|e^{-\gamma t} f|^2_{L^2(t,x)}+\frac{1}{\gamma}|\mathcal{D}|_{L^\infty} |e^{-\gamma t}  u|^2_{L^2(t,x)}+ |e^{-\gamma t}g|^2_{L^2(t,x_1)}\right).
\end{align}
Taking $\gamma$ large gives the desired inequality.
\end{proof}
\begin{prop}
There exists a unique solution for the system \eqref{eqn:system}.
\end{prop}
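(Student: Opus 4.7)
The plan is to deduce uniqueness directly from the previous proposition and to establish existence by Picard iteration on the unperturbed uniformly stable problem $(L(\partial),B)$.

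For uniqueness, suppose $u_1$ and $u_2$ both solve \eqref{eqn:system}. Their difference $v=u_1-u_2$ satisfies
\[
L(\partial)v+e^{i\phi_3/\eps}Mv=0 \text{ in } x_2>0, \quad Bv=0 \text{ on } x_2=0, \quad v=0 \text{ for } t<0.
\]
Since the zero-order coefficient $e^{i\phi_3/\eps}M$ is uniformly bounded (in $\eps$) by $|M|$, the preceding proposition applies with $\mathcal{D}:=e^{i\phi_3/\eps}M$ to yield $\gamma|e^{-\gamma t}v|_{L^2(t,x)}^2\leq 0$, forcing $v\equiv 0$.

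For existence, I would set $u^{(0)}=0$ and inductively define $u^{(n+1)}$ as the solution of the uniformly stable problem
\[
L(\partial)u^{(n+1)}=-e^{i\phi_3/\eps}Mu^{(n)} \text{ in } x_2>0, \quad Bu^{(n+1)}=\eps G \text{ on } x_2=0, \quad u^{(n+1)}=0 \text{ in } t<0.
\]
Each $u^{(n+1)}$ is produced by the standard existence theory of \cite{Kre70} applied to $(L(\partial),B)$, which satisfies ULC in this section. The differences $w^{(n)}:=u^{(n+1)}-u^{(n)}$ satisfy the homogeneous boundary problem with source $-e^{i\phi_3/\eps}Mw^{(n-1)}$, so \eqref{cg10estimate} gives
\[
\gamma|e^{-\gamma t}w^{(n)}|_{L^2(t,x)}^2\leq \frac{C|M|^2}{\gamma}|e^{-\gamma t}w^{(n-1)}|_{L^2(t,x)}^2,
\]
that is, $|e^{-\gamma t}w^{(n)}|_{L^2}\leq (C^{1/2}|M|/\gamma)|e^{-\gamma t}w^{(n-1)}|_{L^2}$. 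Choosing $\gamma$ large enough so that this contraction factor is less than $1/2$ makes $(u^{(n)})$ a Cauchy sequence in the weighted $L^2$ space, with a limit $u$. Passing to the limit in the iteration, using both the interior and trace bounds simultaneously delivered by \eqref{cg10estimate}, produces a solution of \eqref{eqn:system}, and causality ($u=0$ for $t<0$) is preserved by the exponential weight.

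The main technical point to verify is that the limit $u$ inherits enough regularity for the boundary trace and the interior equation to make sense in a strong rather than merely distributional form. This is handled by noting that $G$ is taken smooth and periodic in the WKB framework, and by invoking the standard higher-order Sobolev version of the Kreiss estimate in place of \eqref{cg10estimate} to carry the same contraction argument through in $H^s$ for $s$ large enough to yield continuous traces.
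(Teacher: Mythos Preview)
Your argument is correct and follows a genuinely different route from the paper. The paper proves existence by duality: it reduces to finding $w^\eps$ with homogeneous boundary data, sets up the adjoint problem $(L^*(\partial),A_\sharp)$ (which is of the same form and hence enjoys the same Kreiss-type estimate), defines a bounded linear functional on the range of $L^*(\partial)$ acting on test functions satisfying $A_\sharp V|_{x_2=0}=0$, and then invokes Hahn--Banach and the Riesz representation theorem to produce a weak solution, identifying the boundary trace via Green's formula. Your Picard iteration is more elementary and constructive: it uses only the \emph{forward} Kreiss estimate \eqref{cg10estimate} for the unperturbed $(L(\partial),B)$, treats the bounded zero-order term $e^{i\phi_3/\eps}M$ as a source, and obtains the solution as a contraction fixed point in the weighted $L^2$ space. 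The iteration also upgrades immediately to $H^s$, as you observe, whereas the duality route gives an $L^2$ solution whose higher regularity must be recovered afterward. One small correction: the statement ``causality is preserved by the exponential weight'' is imprecise; vanishing in $t<0$ is inherited by each iterate from the Kreiss solution operator applied to data that vanish in the past, and this property then survives the $L^2$ limit.
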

\begin{proof}
Consider the following system:
\begin{equation} \label{wsystem}
\begin{aligned}
L(\D)w^\eps + e^{i\phi_3/\epsilon}Mw^\eps &= -\eps^N[(L(\D)+e^{i\phi_3/\epsilon}M)U_N(t,x,\theta)]|_{\theta=\Phi/\epsilon} =: f\\
Bw^\eps &= 0 \text{ on boundary}
\end{aligned}
\end{equation}
We observe that if there exists $w^\eps$ satisfying \eqref{wsystem}, then  $u^\eps = u_a^\eps + w^\eps$ solves equation \eqref{eqn:1system}. Therefore, it is left to prove the existence and uniqueness of $w^\eps$.  Consider the dual problem:
\begin{align} \label{dual}
L^*(\D)v &= f_\sharp\\
A_\sharp v &= g_\sharp
\end{align}
where $L^*(\D)$ is the adjoint operator of $L(\D) + e^{i\phi_3/\epsilon}M$, $f_\sharp, g_\sharp$ are appropriate functions and $A_\sharp$ satisfies:
\begin{equation} \label{ABsharp}
B_\sharp^T B + A_\sharp^T A = I.
\end{equation}
Define the space
\begin{equation}
F = \{V \in C^\infty_0 (\overline{\Omega}) \text{ such that } A_\sharp V|_{x_2 = 0} = 0\}
\end{equation}
For all $V\in F$, define 
\begin{equation} \label{ell}
\ell[L^*(\D)V]:= (f,V)_{L^2} + (0, B_\sharp V|_{x_2 = 0})_{L^2(t,x_1)}=(f,V)_{L^2}.
\end{equation} 
By definition $\ell$ is linear.  We observe that the system \eqref{dual} is the same form as \eqref{eqn:1system}, and therefore satisfies the same energy estimate \eqref{cg10estimate}. So we have:
\begin{align}
\ell[L^*(\D)V] & \leq \|f\|_{L^2(t,x)} \|V\|_{L^2(t,x)} \\
& \leq C \|V\|_{L^2(t,x)} \\
& \leq C\|L^*(\D)V\|_{L^2(t,x)} 
\end{align}
This shows that $\ell$ is continuous. Now by Hahn-Banach theorem we extend $\ell$ to $L^2(\Omega_T)$. Riesz representation theorem implies that there exists $W \in L^2(\Omega_T)$ such that 
\begin{equation} \label{riesz}
\ell[L^*(\D)V] = (W,L^*(\D)V)_{L^2(t,x)} =  (L(\D)W,V)_{L^2(t,x)}
\end{equation}
Thus, we have $L(\D)W = f$. Moreover,  the trace of $W$ lies in $H^{-1/2}(t,x_1)$, and the following Green's formula holds:
\begin{equation} \label{greens}
(L(\D)W,V)_{L^2(t,x)} = (f,V)_{L^2(t,x)} + (W, V)_{H^{-1/2}(t,x_1)H^{1/2}(t,x_1)} \quad \forall V \subset C_0^\infty(\overline{\Omega}).
\end{equation}
Comparing \eqref{greens} and \eqref{ell} gives:
\begin{equation}
(W(t,x_1,0),V(t,x_1,0))_{L^2(t,x_1)} = (0, B_\sharp V(t,x_1,0))_{L^2(t,x_1)} = 0 \quad \forall V\in F
\end{equation}
We can therefore conclude that $W = 0$ on $x_2 = 0$. This concludes the proof of the proposition.
\end{proof}

Using proposition \ref{prop3.3} and estimate \eqref{cg10estimate}, we have
\begin{equation} \label{3.3}
\begin{aligned}
\gamma |e^{-\gamma t}(u-u_a^\eps)|^2_{L^2(t,x)} & \leq
\frac{C}{\gamma} |e^{-\gamma t}\eps^N([L(\D)+e^{\frac{\phi_3(t,x)}{\epsilon}}M)U_N(t,x,\theta)]|_{\theta=\Phi/\epsilon}|^2_{L^2(t,x)} \\
|e^{-\gamma t}(u-u_a^\eps)|_{L^2(t,x)} &\leq  \frac{C}{\gamma} \eps^{N} |e^{-\gamma t}[(L(\D)+e^{i\phi_3/\epsilon}M)U_N(t,x,\theta)]|_{\theta=\Phi/\epsilon}|_{L^2(t,x)} \\
&\leq \frac{C}{\gamma} \eps^{N} |(L(\D)+e^{i\theta_3}M)U_N(t,x,\theta)|_{H^1(t,x,\theta)}
\end{aligned}
\end{equation}
Since $e^{-\gamma T}|(u-u_a^\eps)|_{L^2(t,x)} \leq |e^{-\gamma t}(u-u_a^\eps)|_{L^2(t,x)}$ for $t \in [0,T]$,  we have:
\begin{equation}
|(u-u_a^\eps)|_{L^2(t,x)}
\leq \frac{Ce^{\gamma T}}{\gamma} \eps^{N} |(L(\D)+e^{i\theta_3}M)U_N(t,x,\theta)|_{H^1(t,x,\theta)}
\end{equation}
which gives:
\begin{equation} \label{uua}
|(u-u_a^\eps)|_{L^2(t,x)} \leq C \epsilon^{N}
\end{equation}
To attain higher order derivatives, we commute the system \ref{3.2} with tangential derivatives $\epsilon^{|\alpha|} \D^\alpha_{t,x_1}$ ($|\alpha| \leq s$, $s > n/2$).
We first commute the system with $\epsilon \D_t$:
\begin{equation}
\begin{aligned}
&\epsilon \D_tL(\D)(u-u_a^\eps) + \epsilon \D_t(e^{i\phi_3/\epsilon}M(u-u_a^\eps)) = -\eps^N[\epsilon \D_t(L(\D)+e^{i\phi_3/\epsilon}M)U_N(t,x,\theta)]|_{\theta=\Phi/\epsilon}\\
&\epsilon \D_tB(u-u_a^\eps)= 0 \text{ on boundary}
\end{aligned}
\end{equation}
After applying chain rule and reorganizing terms, we have $\D_t(u-u_a^\eps)$ satisfying:
\begin{equation} \label{3.8}
\begin{aligned}
 L(\D)(\D_t(u-u_a^\eps)) + e^{i\phi_3/\epsilon}M(\D_t(u-u_a^\eps)) &= -\eps^{N} \D_t[(L(\D)+e^{i\phi_3/\epsilon}M)U_N(t,x,\theta)]|_{\theta=\Phi/\epsilon} 
\\&- \frac{1}{\eps}i e^{i\phi_3/\eps}M\D_t\phi_3(u-u_a^\eps)\\
B(\D_t(u-u_a^\eps)) &= 0 \text{ on boundary}
\end{aligned}
\end{equation}
where the right hand side of the first equation is $\mathcal{O}(\eps^{N-1})$ by \ref{uua}. Thus, applying the argument \ref{3.3} to \ref{uua} to the system \ref{3.8} gives:
\begin{equation} 
|\D_t(u-u_a^\eps)|_{L^2(t,x)} \leq C \epsilon^{N-1}
\end{equation}
We note that commuting \ref{3.2} with $\eps\D_{x_1}$ shows that $\D_{x_1}(u-u_a^\eps)$ satisfies the exact system as \ref{3.8} with $\D_{t}$ replaced by $\D_{x_1}$, thus :
\begin{equation} 
|\D_{x_1}(u-u_a^\eps)|_{L^2(t,x)} \leq C \epsilon^{N-1}
\end{equation}
We iterate the above argument and get estimate for higher derivatives:
\begin{equation}
|\D^{|\alpha|}_{t,x_1}(u-u_a^\eps)|_{L^2(t,x)} \leq C \epsilon^{N-|\alpha|}
\end{equation}
Finally, to estimate the $\D_{x_2}(u-u_a^\epsilon)$, using $B_2$ is invertible, we have from the interior equation:
\begin{equation} \label{dx2}
\begin{aligned}
&(\D_t + B_1\D_{x_1} + B_2\D_{x_2})(u-u_a^\eps) + e^{i\phi_3/\epsilon}(u-u_a^\eps) = -\eps^N(L(\D)+e^{i\phi_3/\epsilon}M)U_N \\
\Rightarrow &B_2\D_{x_2}(u-u_a^\eps) = -\eps^N(L(\D)+e^{i\phi_3/\epsilon}M)U_N - \D_t (u-u_a^\eps) - B_1\D_{x_1}(u-u_a^\eps) - e^{i\phi_3/\epsilon}(u-u_a^\eps) \\
\Rightarrow &\D_{x_2}(u-u_a^\eps) = B_2^{-1}[-\eps^N(L(\D)+e^{i\phi_3/\epsilon}M)U_N - \D_t (u-u_a^\eps) - B_1\D_{x_1}(u-u_a^\eps) - e^{i\phi_3/\epsilon}(u-u_a^\eps)]
\end{aligned}
\end{equation}
where we already have an estimate for the all the terms that appear on the right, which gives:
\begin{equation}
|\D_{x_2}(u-u_a^\eps)|_{L^2(t,x)} \leq C\epsilon^{N-1}.
\end{equation}
Now we iterate this process by continue applying $\D_{x_2}$ to \eqref{dx2}. This gives estimates to higher order $x_2$ derivative:
\begin{equation}
|\D^\alpha_{x_2}(u-u_a^\eps)|_{L^2(t,x)} \leq C\epsilon^{N-|\alpha|}
\end{equation}
Now, for $s = |\alpha| > \frac{n}{2}$, ($n=3$ here) we have:
\begin{equation}
|u-u_a^\eps|_{L^{\infty}(t,x)} \leq C|u-u_a^\eps|_{H^s(t,x)} = C|\D^\alpha(u-u_a^\eps)|_{L^2(t,x)} \leq C\epsilon^{N-|\alpha|}
\end{equation}
Now we recall that from \eqref{eqn:profile'}, we can compute as many terms as we want in the expansion of $u_a^\eps$. Therefore, for any $n \in \NN$, we can choose the highest order of approximate solution $N \geq n+2$ to  conclude that:
\begin{equation}
|u-u_a^\eps|_{L^{\infty}(t,x)} \leq C\epsilon^n
\end{equation}
\newpage

\section{Estimations for Weakly Stable Hyperbolic System} \label{estimatations}


 In this section we study transformed singular system \eqref{i9}. We assume that  the uniform Lopatinski condition \textbf{fails} only on the set $\Upsilon_0 = \{\beta_l, -\beta_l\}\subset \mathcal{H}$,  and that the oscillatory term may have both positive and negative Fourier spectrum.  In the case where there is only one incoming mode,  \cite{W4} Theorem 2.12 gives the following estimate  to the system \eqref{i9}.
 \begin{equation}
 |U^\gamma|_{L^2(t,x,\theta)} + \left|\frac{U^\gamma(0)}{\sqrt\gamma} \right|_{L^2(t,x_1,\theta)} \leq K \left[\frac{1}{\gamma^2}\left(\sum_{k\in\ZZ} \left||X_k|\widehat{F_k}\right|^2_{L^2}\right)^{1/2} + \frac{1}{\gamma^{3/2}}\left(\sum_{k\in\ZZ} \left||X_k|\widehat{G_k}\right|^2_{L^2(\sigma,\eta)}\right)\right]
 \end{equation}
 
In the case where there are at least two incoming modes, there is so far no analogous theorem,  but we have been able to prove such a result in the small/medium frequency region (Theorem \ref{mainthm}):
\begin{equation}
|\chi_D U^\gamma|_{L^2(t,x,\theta)} + \left|\frac{\chi_D U^\gamma (0)}{\sqrt{\gamma}}\right|_{L^2(t,x_1,\theta)} \leq 
K\left[ \frac{1}{\gamma^2}(\sum_{k\in\ZZ}\left| \chi |X_k|\widehat{F_k}\right|^2_{L^2(x_2,\sigma,\eta)})^{1/2} + \frac{1}{\gamma^{3/2}}(\sum_{k\in\ZZ}\left| \chi|X_k|\widehat{G_k}\right|^2_{L^2(\sigma,\eta)})^{1/2}\right]
\end{equation}
where $\chi_D$ and $\chi$ are the characteristic functions corresponds to the small/medium frequency region $|\zeta|\leq \eps^{\alpha-1}$, $0<\alpha<1$. 
The first step is to prove a  ``small/medium frequency iteration estimate" (proposition \ref{tt30}), and that is then used to prove  the main result.

\subsection{Extensions to $\Gamma_\delta$ and then to $\Xi$} \cite{W4} \hfill \\

We here extend the variables defined in section \ref{sec2} in the same way as \cite{W4}.  We recall that the $\Gamma^+_\delta(\beta_l)$ is a conic neighborhood of $\beta_l$, next we define the ``opposite conic" $\Gamma^-_\delta(\beta_l)$:
\begin{align}\label{ss7}
\Gamma^-_\delta(\beta_l):=\{(\sigma-i\gamma,\eta):(-\sigma-i\gamma,-\eta)\in\Gamma^+_\delta(\beta_l)\}.
\end{align}
We extend the eigenvalues $\omega_j$ and $R_j$ to the $\Gamma_\delta(\beta_l) : =\Gamma^+_\delta(\beta_l) \cup \Gamma^-_\delta(\beta_l)$ by the following equations:
\begin{align}\label{s7ff}
R_j(\sigma-i\gamma,\eta)=\overline{R}_j(-\sigma-i\gamma,-\eta) \text{ for }(\sigma-i\gamma,\eta)\in\Gamma^-_\delta(\beta_l).\\
\omega_j(\sigma-i\gamma,\eta)=\overline{\omega}_j(-\sigma-i\gamma,-\eta) \text{ for }(\sigma-i\gamma,\eta)\in\Gamma^-_\delta(\beta_l).
\end{align}
This extension keeps $R_j$ analytic in $\tau$, $C^\infty$ in $\eta$,   and homogeneous of degree $1$ in $\Gamma_\delta(\beta_l)$, and $\omega_j$  analytic in $\tau$.  We define the new normalized $r_j := \frac{R_j}{|R_j|}$ using the extended $R_j$. This also gives an extension of the matrix $S(\zeta)$ defined in \eqref{szeta}.

We extend $R_j$ to $\Xi$ in a way such that $S(\zeta)$ is invertible and $S^{-1}$ is uniformly bounded.  We extent $\omega_j$ to $\Xi$ such that the extended $\omega_j$ are $C^\infty$, analytic in $\tau$, and homogeneous of degree $1$.

We observe that $\mathcal{A}$ is real since each $A_j$ are,  so we have
\begin{align}\label{s7fk}
\mathcal{A}(\zeta)R_j(\zeta)=\omega_j(\zeta)R_j(\zeta)\text{ on }\Gamma_\delta(\beta_l).
\end{align}
By the way of extension \eqref{s7ff}, the following inequality still hods on $\Gamma_\delta(\beta_l)$:
 \begin{align}\label{s7fi}
 \begin{split}
 &\mathrm{Im}\;\omega_j(\zeta)\leq -c\gamma \text{ for }j\in\mathcal{O}\\
 &\mathrm{Im}\;\omega_j(\zeta)\geq c\gamma \text{ for }j\in \mathcal{I},
\end{split}
\end{align}
for the same constant $c$ in \eqref{s5}.    This implies that on $\Gamma_\delta(\beta_l)$, $R_j(\zeta)\in \mathbb{E}^s(\zeta)$ (definition \ref{stablesubspace}) for $j\in \mathcal{I}$.  We also have $\det BR_-(\pm\beta_l)=0$ since $R_j$s are real. \footnote{The extensions in this section can be applied in the conic neighborhood of any vector and doesn't rely on the failure of uniform Lopatinski condition. }  

\subsection{Tools for Estimates} \label{sectools} \cite{W4}\hfill \\

This section quoted earlier results and establish definitions to be used in proving a uniform estimate.  This section is established by Williams \cite{W4} but with modified definitions \ref{t28} and \ref{t29}.
\begin{prop}\label{g1z}
\cite{W4} For $\gamma>0$, $\tau=\sigma-i\gamma$ we  have on $x_2\geq 0$:
\begin{align}\label{g1}
\begin{split}
&(a) \left|\int^{x_2}_0e^{-\gamma (x_2-s)}f(s,\tau,\eta)ds\right|_{L^2(x_2,\sigma,\eta)}\leq \frac{1}{\gamma}|f|_{L^2(x_2,\sigma,\eta)}\\
&(b) \left|\int^{\infty}_{x_2}e^{\gamma (x_2-s)}f(s,\tau,\eta)ds\right|_{L^2(x_2,\sigma,\eta)}\leq \frac{1}{\gamma}|f|_{L^2(x_2,\sigma,\eta)}\\
&(c)   \left|\int^{\infty}_0e^{-\gamma s}f(s,\tau,\eta)ds\right|_{L^2(\sigma,\eta)}\leq \frac{1}{\sqrt{2\gamma}}|f|_{L^2(x_2,\sigma,\eta)}\\
&(d) \left|e^{-\gamma x_2}g(\tau,\eta)\right|_{L^2(x_2,\sigma,\eta)} = \frac{1}{\sqrt{2\gamma}}|g|_{L^2(\sigma,\eta)}.
\end{split}
\end{align}

\end{prop}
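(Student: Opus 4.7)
The plan is to prove each of the four inequalities in Proposition \ref{g1z} essentially as exercises in Cauchy--Schwarz and Fubini, since the dependence on $(\sigma,\eta)$ is merely parametric: I can first fix $(\sigma,\eta)$, prove the estimate on the $x_2$-line, and then integrate the resulting squared inequality in $(\sigma,\eta)$ to recover the full $L^2$ norm.

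For part (a), for each fixed $(\sigma,\eta)$, I would split $e^{-\gamma(x_2-s)} = e^{-\gamma(x_2-s)/2}\cdot e^{-\gamma(x_2-s)/2}$ and apply Cauchy--Schwarz to obtain
\begin{equation*}
\left|\int_0^{x_2} e^{-\gamma(x_2-s)} f(s,\tau,\eta)\,ds\right|^2 \leq \left(\int_0^{x_2} e^{-\gamma(x_2-s)}\,ds\right)\left(\int_0^{x_2} e^{-\gamma(x_2-s)} |f(s,\tau,\eta)|^2\,ds\right) \leq \tfrac{1}{\gamma}\int_0^{x_2} e^{-\gamma(x_2-s)} |f(s,\tau,\eta)|^2\,ds.
\end{equation*}
Then I integrate in $x_2$ on $[0,\infty)$, swap the order of integration via Fubini (the $x_2$-integral of $e^{-\gamma(x_2-s)}$ over $[s,\infty)$ is $1/\gamma$), and obtain $\frac{1}{\gamma^2}|f(\cdot,\tau,\eta)|_{L^2(x_2)}^2$. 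Taking square roots and then integrating in $(\sigma,\eta)$ gives the claim. Equivalently, one can simply observe that for fixed $(\sigma,\eta)$ this is the convolution of $f(\cdot,\tau,\eta)\mathbb{1}_{[0,\infty)}$ with $e^{-\gamma\,\cdot\,}\mathbb{1}_{[0,\infty)}$, whose $L^1$ norm is $1/\gamma$, so Young's inequality yields the stated bound directly.

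For part (b), the argument is identical after the change of variable $s\mapsto -s$ (or just by reversing the role of $x_2$ and $s$); the only change is that the $e^{-\gamma(x_2-s)}$ convolution kernel is replaced by $e^{-\gamma(s-x_2)}$, which still has $L^1$ mass $1/\gamma$. For part (c), a single application of Cauchy--Schwarz for fixed $(\sigma,\eta)$ gives
\begin{equation*}
\left|\int_0^{\infty} e^{-\gamma s} f(s,\tau,\eta)\,ds\right|^2 \leq \left(\int_0^{\infty} e^{-2\gamma s}\,ds\right)\,|f(\cdot,\tau,\eta)|_{L^2(x_2)}^2 = \tfrac{1}{2\gamma}|f(\cdot,\tau,\eta)|_{L^2(x_2)}^2,
\end{equation*}
and squaring-integrating in $(\sigma,\eta)$ finishes (c). For part (d), the equality is a direct computation: $|e^{-\gamma x_2}g(\tau,\eta)|_{L^2(x_2,\sigma,\eta)}^2 = \int|g|^2\,d\sigma\,d\eta \int_0^\infty e^{-2\gamma x_2}\,dx_2 = \frac{1}{2\gamma}|g|_{L^2(\sigma,\eta)}^2$.

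None of the four parts presents a real obstacle; the only point to be careful about is that the $L^2$ norms are taken in the variables $(x_2,\sigma,\eta)$ versus $(\sigma,\eta)$, so the roles played by $(\sigma,\eta)$ as passive parameters during the $x_2$-estimates must be kept straight, and Fubini must be invoked for the non-negative integrand in (a) and (b) to justify swapping the $x_2$ and $s$ integrations. Since all four inequalities separate into a $(\sigma,\eta)$-pointwise estimate followed by an integration in $(\sigma,\eta)$, the constants $1/\gamma$ and $1/\sqrt{2\gamma}$ come out of the $x_2$-analysis exactly as in the one-variable case.
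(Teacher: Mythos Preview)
Your proof is correct and is the standard argument for these elementary estimates. The paper itself does not prove this proposition; it is simply quoted from \cite{W4} without proof, so there is nothing to compare against beyond noting that your Cauchy--Schwarz/Young's inequality plus Fubini approach is exactly how such bounds are established.
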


\begin{notn} \label{notation}
\begin{enumerate}
\item For $\zeta = (\tau,\eta) \in \Xi$,  we define $X_k := \zeta+k\frac{\beta_l}{\eps}$. 
\item For any function $f(\zeta)$,  we write:
\begin{equation}
f(\eps,k) = f(\eps,k)(\zeta) := f(X_k)
\end{equation}
\item We define $\chi_b(\zeta)$ be the characteristic function of $\Gamma_\delta(\beta_l)$. Thus, $\zeta \in \text{supp} \chi_b(\eps,k)$ if and only if $X_k \in \Gamma_{\delta}(\beta_l)$.
\end{enumerate}
\end{notn}

An important tool we use is the following lemma from \cite{W4}, which is an easy consequence of assumption \ref{lopassump} and relevant definitions. 
\begin{lem}\label{t6}\cite{W4}
For $\beta_l \in\Upsilon^+_0$.  Recall that $X_k:=\zeta+k\frac{\beta_l}{\eps}$ and that $\zeta\in \mathrm{supp}\;\chi_b(\eps,k)\Leftrightarrow X_k\in \Gamma_\delta(\beta_l)$. 
For $k\in \ZZ$ the following estimates hold:
\begin{align}\label{t6a}
\begin{split}
&(a)\;|\Delta(\eps,k)|\lesssim 1\text{ on }\Xi \\
&(b)\; \left|[Br_-(\eps,k)]^{-1}\right|\lesssim |\Delta(\eps,k)|^{-1}\sim \frac{|X_k|}{|\tau-c_+(\beta)\eta|}\text{ on }\mathrm{supp}\;\chi_b(\eps,k)\\
&(c)\; \left| [Br_-(\eps,k)]^{-1}\right|\lesssim |\Delta(\eps,k)|^{-1} \leq C(\delta)\text{ on }\Xi\setminus  \mathrm{supp}\;\chi_b(\eps,k) \\
&(d)\; |Br_{\pm}(\eps,k)|\lesssim 1 \text{ on }\Xi,\\
& (e)\; |\omega_i(\eps,k)-\omega_j(\eps,k)|\sim |X_k| \text{ for }i\neq j\text{ on }\mathrm{supp}\;\chi_b(\eps,k)\\
&(f)\; \mathrm{Im}\;\omega_j(\eps,k)\leq -c\gamma\text{ for }j\in \mathcal{O},\;\;\;\mathrm{Im}\;\omega_j(\eps,k)\geq c\gamma\text{ for }j\in \mathcal{I}, \text{ on }\mathrm{supp}\;\chi_b(\eps,k).
\end{split}
\end{align}

(g) Let $r\in\ZZ\setminus 0$.  When $X_k \notin  \Gamma_{\frac{\delta}{|r|}}(\beta)$ we have  $|\Delta(\eps,k)|^{-1}\leq C(\delta)|r|$.

\end{lem}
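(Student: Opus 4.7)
The plan is to treat Lemma~\ref{t6} as a catalogue of estimates that flow from the structural data of the problem: homogeneity of $\Delta$ (degree $0$), homogeneity of the $\omega_j$ and $R_j$ (degree $1$), the normalization $|r_j|=1$, constancy of the matrix $B$, strict hyperbolicity (Assumption~\ref{stricthyperbolicity}), the weak-stability Assumption~\ref{lopassump}, and compactness of the unit sphere. I would split the argument into two groups: \emph{global} estimates (a), (c), (d), (e), (f), coming from compactness plus homogeneity, and \emph{local} estimates (b), (g), coming from a linear Taylor expansion of $\Delta$ at $\beta_l$.

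For the global estimates, (a) and (d) are immediate from continuity of $\Delta$ and $Br_\pm$, which are degree-zero homogeneous on $\Xi$ after the extensions of Section~4.1. For (c), Assumption~\ref{lopassump}(1)--(2) tells us that $\Delta$ vanishes on $\Xi$ only along the rays through $\pm\beta_l$; compactness of $(\{|\zeta|=1\}\cap\Xi)\setminus \Gamma_\delta(\beta_l)$ combined with degree-zero homogeneity then gives $|\Delta|\geq c(\delta)>0$ there, and $|[Br_-]^{-1}|\lesssim |\Delta|^{-1}$ follows from Cramer's rule, with (d) bounding the cofactors. For (e), Assumption~\ref{stricthyperbolicity} makes the $\omega_j$ pairwise distinct on $\Gamma_\delta(\beta_l)$; since they are continuous and degree-one homogeneous, separations on the compact piece of the unit sphere inside the closed cone are bounded below, and scaling yields $|\omega_i(X_k)-\omega_j(X_k)|\sim |X_k|$. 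Part (f) is just \eqref{s7fi}, which is the extension of \eqref{s5} to $\Gamma_\delta(\beta_l)$ via the symmetry formula (\ref{s7ff}).

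The heart of the lemma is the local equivalence $|\Delta(\eps,k)|\sim |\tau-c_+(\beta)\eta|/|X_k|$ in (b). I would start from Assumption~\ref{lopassump}(3): $\Delta(\beta_l)=0$ and $\D_\tau\Delta(\beta_l)\neq 0$. Since $\Delta$ is homogeneous of degree zero, Euler's identity forces $\sigma_l\D_\tau\Delta(\beta_l)+\eta_l\D_\eta\Delta(\beta_l)=0$, hence $\D_\eta\Delta(\beta_l)=-c_+(\beta_l)\D_\tau\Delta(\beta_l)$ with $c_+(\beta_l)=\sigma_l/\eta_l$. Consequently the linearization of $\Delta$ at $\beta_l$ is proportional to the single form $\tau-c_+(\beta_l)\eta$; the degree-zero rescaling to the unit sphere contributes a $1/|\zeta|$ factor, producing $|\Delta(\zeta)|\sim |\tau-c_+(\beta_l)\eta|/|\zeta|$ on a neighborhood in $\Gamma_\delta(\beta_l)$, and substituting $\zeta=X_k$ gives (b); the bound $|[Br_-(\eps,k)]^{-1}|\lesssim |\Delta(\eps,k)|^{-1}$ is again Cramer plus (d). Part (g) then follows by cases: if $X_k\notin\Gamma_{\delta/|r|}(\beta)$ but $X_k\in\Gamma_\delta(\beta)$, the angular distance from $X_k/|X_k|$ to $\beta/|\beta|$ is at least $\sim\delta/|r|$, which via the linear form above forces $|\Delta(X_k)|\gtrsim \delta/|r|$; if instead $X_k\notin\Gamma_\delta(\beta)$, invoke (c) directly. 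The only point that requires care is checking that the sphere-restricted gradient of $\Delta$ at $\beta_l$ is transverse to the failure ray and that this transverse direction is captured exactly by $\tau-c_+(\beta_l)\eta$ --- Euler's relation together with $\D_\tau\Delta(\beta_l)\neq 0$ handles both at once, so the apparent obstacle reduces to bookkeeping.
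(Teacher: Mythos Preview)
The paper does not supply its own proof of this lemma: it is quoted from \cite{W4} with the remark that it ``is an easy consequence of assumption \ref{lopassump} and relevant definitions.'' So there is no detailed argument in the paper to compare against; your outline is being judged on its own merits, and it is essentially the natural derivation the paper alludes to.

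Your sketch is correct. The global bounds (a), (c), (d), (e), (f) are exactly compactness-plus-homogeneity statements, and your treatment of them is right. The key local estimate (b) hinges on the observation that, because $\Delta$ is homogeneous of degree zero, Euler's relation forces its full gradient at $\beta_l$ to be tangent to the sphere and proportional to the single linear form $\tau - c_+(\beta_l)\eta$ vanishing on the ray through $\beta_l$; together with $\partial_\tau\Delta(\beta_l)\neq 0$ this makes $\Delta$ vanish simply on the sphere at $\hat\beta_l$, which is precisely the equivalence $|\Delta|\sim |\tau-c_+\eta|/|\zeta|$. Your derivation of (g) from (b) and (c) by splitting into the two cases $X_k\in\Gamma_\delta\setminus\Gamma_{\delta/|r|}$ and $X_k\notin\Gamma_\delta$ is also correct.

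Two minor points you could make explicit in a full write-up: first, the identification $c_+(\beta_l)=\sigma_l/\eta_l$ tacitly assumes $\eta_l\neq 0$ (otherwise the roles of $\tau$ and $\eta$ swap); second, the Taylor argument near $\beta_l$ must cover complex $\tau=\sigma-i\gamma$, which works because $\Delta$ is analytic in $\tau$ so the $\gamma$-derivative is tied to $\partial_\sigma\Delta$ via Cauchy--Riemann. You acknowledge the second point; neither is a gap.
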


We also have the following lemma:
\begin{lem}\label{c9a}\cite{W4}
 Let $X_{k}=\zeta+k\frac{\beta_l}{\eps}$, $X_{k-r}=\zeta+(k-r)\frac{\beta_l}{\eps}$, where $r\in\ZZ\setminus 0$.    For $\delta\in (0,\delta_0]$ and $N_1\in \NN$ sufficiently large, 
 assume that 
 \begin{align}\label{c10}
 X_{k}\in\Gamma_{\frac{\delta}{N_1|r|}}(\beta_l), \text{ but } X_{k-r}\notin\Gamma_{\frac{\delta}{|r|}}(\beta_l).
 \end{align}
  Then 
 $|X_{k-r}|\lesssim \frac{1}{N_1}|X_k|$.
 \end{lem}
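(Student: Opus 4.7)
The key geometric observation is that $X_k - X_{k-r} = r\beta_l/\eps$ points in the direction of $\beta_l$, so if I decompose along and perpendicular to the unit vector $\hat\beta := \beta_l/|\beta_l|$ (viewing $\Xi$ as a subset of $\RR^3$ with coordinates $(\sigma,\gamma,\eta)$, in which $\beta_l$ has zero $\gamma$-component), the perpendicular components of $X_k$ and $X_{k-r}$ must coincide. Writing $X_k = \alpha\hat\beta + Y$ with $Y \perp \hat\beta$, I have $X_{k-r} = \alpha'\hat\beta + Y$ where $\alpha' := \alpha - r|\beta_l|/\eps$. The strategy is to control this common perpendicular component from above using the hypothesis on $X_k$ and from below using the hypothesis on $X_{k-r}$.

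For the upper bound on $|Y|$, recall that $\Gamma_\delta(\beta_l) = \Gamma^+_\delta \cup \Gamma^-_\delta$, and that $\zeta\in\Gamma^\pm_\delta(\beta_l)$ amounts to $|\zeta/|\zeta| \mp \hat\beta|\le \delta$ (for $\Gamma^-$ this uses the reflection symmetry built into its definition). So the hypothesis $X_k\in \Gamma_{\delta/(N_1|r|)}(\beta_l)$ gives $|X_k/|X_k| - \epsilon\hat\beta|\le \delta/(N_1|r|)$ for some sign $\epsilon\in\{\pm 1\}$, and squaring using $|X_k|^2 = \alpha^2 + |Y|^2$ yields immediately
\[
\frac{|Y|^2}{|X_k|^2} \le \frac{\delta^2}{N_1^2|r|^2}, \qquad \text{i.e.,}\qquad |Y| \le \frac{\delta}{N_1|r|}|X_k|.
\]

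For the lower bound on $|Y|$, assume the non-trivial case $X_{k-r}\ne 0$ and set $\epsilon' := \sgn(\alpha')$. A direct trigonometric calculation using $|X_{k-r}|^2 = (\alpha')^2 + |Y|^2$ gives
\[
\left|\frac{X_{k-r}}{|X_{k-r}|} - \epsilon'\hat\beta\right|^2 \;=\; \frac{2|Y|^2}{|X_{k-r}|(|X_{k-r}|+|\alpha'|)} \;\le\; \frac{2|Y|^2}{|X_{k-r}|^2}.
\]
The hypothesis $X_{k-r}\notin\Gamma_{\delta/|r|}(\beta_l)$ forces the left-hand side to exceed $\delta^2/|r|^2$, which yields $|Y| > \delta|X_{k-r}|/(\sqrt{2}|r|)$. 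Combining the two bounds on $|Y|$ gives $|X_{k-r}| < \sqrt{2}|X_k|/N_1$, which is the desired conclusion.

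There is no substantive obstacle here; the proof is essentially elementary Euclidean geometry in $\RR^3$. The only mild bookkeeping concerns the choice of sign $\epsilon$ in the first step (which is unambiguous once $N_1$ is large enough that $\delta/(N_1|r|) < 1$, so that $\Gamma^+_{\delta/(N_1|r|)}$ and $\Gamma^-_{\delta/(N_1|r|)}$ are disjoint on the unit sphere) and the trivial handling of the degenerate case $X_{k-r}=0$.
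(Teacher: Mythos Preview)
Your proof is correct. The paper does not give its own proof of this lemma---it is quoted from \cite{W4} without argument---so there is nothing to compare against; your orthogonal decomposition along and perpendicular to $\hat\beta$ is the natural elementary approach and all the computations check out.
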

 
We next define $E_{i,j}$ which is one main tool to be used to control the amplification of $\Delta^{-1}$.
\begin{defn} \label{defneij}\cite{W4}
Let $k\in \ZZ$, $r\in \ZZ \setminus 0$. For $\zeta = (\tau,\eta)\in \Xi$ we define the function of $\zeta$:
\begin{equation}
E_{i,j}(\eps,k,k-r)(\zeta):= \omega_i(\eps,k) - \frac{r\omega_N(\beta_l)}{\eps}-\omega_j(\eps,k-r), \text{ where } i\in \mathcal{O}, j\in \mathcal{I}
\end{equation}
\end{defn}

We separately study the following three cases for the pair $X_k=\zeta+k\frac{\beta_l}{\eps}$, $X_{k-r}=\zeta+(k-r)\frac{\beta_l}{\eps}$ for $\delta$ small and $N_1$ large:
\begin{align}\label{cases}
\begin{split}
& (I) \;X_k\in\Gamma_{\frac{\delta}{N_1|r|}}(\beta_l),  X_{k-r}\in\Gamma_{\frac{\delta}{|r|}}(\beta_l)\\
& (II) \;X_k\in\Gamma_{\frac{\delta}{N_1|r|}}(\beta_l),  X_{k-r}\notin\Gamma_{\frac{\delta}{|r|}}(\beta_l)\\
& (III) \;X_k\in\Gamma_\delta(\beta_l)\setminus \Gamma_{\frac{\delta}{N_1|r|}}(\beta_l)
\end{split}
\end{align}

We proceed to define a global and local amplification factor which is used to control the $\frac{1}{\eps}$ term. We'll see later how the proof of iteration estimate gives rise to this term.

\begin{defn}\label{t28}[Microlocal amplification factors]
Let $C_5\geq 1$, $N_1$ be sufficiently large constants,  and let $0<\xi<1$ denote a number to be chosen (in proposition \ref{finiter}), 

For $k\in\ZZ$, $r\in \ZZ\setminus 0$, and $(\zeta,\eps)\in \mathrm{supp}\;\chi_b(\eps,k) \times (0,\eps_0]$, we define:

$\bullet$ $D(\eps,k,k-r)(\zeta)=C_5|r|$ in case $(III)$

 $\bullet$ $D(\eps,k,k-r)(\zeta)=C_5|r|$ in case $(II)$.

 \noindent If case $(I)$ obtains, then define
 
 $\bullet$ $D(\eps,k,k-r)(\zeta)=\begin{cases} C_5|r|^{2+\delta}, \text{ when } |r| \leq \eps^{-\xi}\\\frac{C_5|r|}{\eps\gamma}\text{ otherwise} \end{cases}$

\noindent If $\zeta\notin \mathrm{supp}\;\chi_b(\eps,k)$, define $D(\eps,k,k-r)(\zeta)=0$.
\end{defn}

\begin{defn}\label{t29}[Global amplification factors]
For $k\in\ZZ$, $r\in\ZZ\setminus 0$,  and $(\zeta,\eps)\in \mathrm{supp}\;\chi_b(\eps,k) \times (0,\eps_0]$, we define
\begin{align}\label{ty29}
\begin{split}
&\DD(\eps,k,k-r)(\zeta)=\begin{cases}\frac{C_5|r|}{\eps\gamma},\text{ if }D(\eps,k,k-r)(\zeta)=\frac{C_5|r|}{\eps\gamma}\\C_5|r|^{2+\delta}, \; otherwise\end{cases}.
\end{split}
\end{align}
\noindent If $\zeta\notin \mathrm{supp}\;\chi_b(\eps,k)$  we set $\DD(\eps,k,k-r)(\zeta)=1$.

\end{defn}
We observe that by definition 
\begin{align}\label{t30}
D(\eps,k,k-r)(\zeta)\leq \DD(\eps,k,k-r)\text{ for all }\zeta\in\Xi
\end{align}

Recall that $V_k:= \widehat{U}_k(\zeta,x_2)$, the Laplace-Fourier transform in $(t,x_1)$ of $U_k(t,x)$.  Let $\chi_g$ be the characteristic function of $\Gamma_\delta(\beta_l)^c$ (the compliment of $\Gamma_\delta(\beta_l)$). We can write using notation \ref{notation}:
\begin{equation}
V_k(x_2,\zeta) = \chi_b(\eps,k)V_k + \chi_g(\eps,k)V_k.
\end{equation}
Define $w_k = (w_k^+, w_k^-)$ for all $\zeta \in \Xi$ by
\begin{equation}
V_k(x_2,\zeta) = S(\eps,k)w_k(x_2,\zeta)
\end{equation} where $S(\zeta)$ as in \eqref{szeta}. For $\zeta\in  \text{supp} \chi_b(\eps,k)$,  by definitions and obvious transforming \eqref{i9}, we see that $w_k$ is a solution of the diagonalized system:
  \begin{align}\label{a3a}
\begin{split}
&D_{x_2}w_k-\begin{pmatrix}\xi_+(\eps,k)&0\\0&\xi_-(\eps,k)\end{pmatrix}w_k=\\
&\qquad i\sum_{r\in\ZZ\setminus 0}\alpha_r e^{ir\frac{\omega_N(\beta_l)}{\eps}x_2}S^{-1}(\eps,k)B_2^{-1}M S(\eps,k-r)w_{k-r}+S^{-1}(\eps,k;\beta)\widehat{F^\eps_k}(x_2,\zeta),\\
&BS(\eps,k)w_k= \hat G_k\text{ on }x_2=0.
\end{split}
\end{align}
where 
 \begin{align}\label{a3b}
 \begin{split}
 &\xi_+(\eps,k)=\mathrm{diag}\;(\omega_1(\eps,k),...,\omega_{N-p}(\eps,k))\\
 &\xi_-(\eps,k)=\mathrm{diag}\;(\omega_{N-p+1}(\eps,k),...,\omega_{N}(\eps,k)).
\end{split}
\end{align}
Solutions to \eqref{a3a} for $F=0$ satisfy:
\begin{align}\label{a5}
w^+_k(x_2,\zeta)=\sum_{r\in\ZZ\setminus 0}\int^\infty_{x_2}e^{i\xi_+(\eps,k)(x_2-s)+ir\frac{\omega_N(\beta_l)}{\eps}s}\ar[a(\eps,k,k-r)w^+_{k-r}(s,\zeta)+b(\eps,k,k-r)w^-_{k-r}(s,\zeta)]ds,
\end{align}
\begin{align}\label{a6}
\begin{split}
&w^-_k(x_2,\zeta)=-\sum_{r\in\ZZ\setminus 0}\int^{x_2}_0e^{i\xi_-(\eps,k)(x_2-s)+ir\frac{\omega_N(\beta_l)}{\eps}s}\ar[c(\eps,k,k-r)w^+_{k-r}(s,\zeta)+d(\eps,k,k-r)w^-_{k-r}(s,\zeta)]ds-\\
&e^{i\xi_-(\eps,k)x_2}[Br_-(\eps,k)]^{-1}Br_+(\eps,k)\sum_{r\in\ZZ\setminus 0}\int^\infty_{0}e^{i\xi_+(\eps,k)(-s)+ir\frac{\omega_N(\beta_l)}{\eps}s}\ar[a(\eps,k,k-r)w^+_{k-r}(s,\zeta)+\\
&\qquad\qquad\qquad b(\eps,k,k-r)w^-_{k-r}(s,\zeta)]ds+e^{i\xi_-(\eps,k)x_2}[Br_-(\eps,k)]^{-1}\hat G_k(\zeta).
\end{split}
\end{align}
The matrices $a,b,c,d$ in \eqref{a5}, \eqref{a6} are block matrix such that:
\begin{align}
\begin{pmatrix}
a&b \\
c&d 
\end{pmatrix} = S(\eps,k)^{-1}B_2^{-1}MS(\eps,k-r)
\end{align}
and we have
\begin{align}\label{a7}
|a(\eps,k,k-r)|\lesssim 1, |b(\eps,k,k-r)|\lesssim 1, |c(\eps,k,k-r)|\lesssim 1, |d(\eps,k,k-r)|\lesssim 1
\end{align}

We also have the following form of equations for $w^\pm_k$:
\begin{align}\label{ab7}
w^+_k(x_2,\zeta)=\sum_{r\in\ZZ\setminus 0}\int^\infty_{x_2}e^{i\xi_+(\eps,k)(x_2-s)+ir\frac{\omega_N(\beta_l)}{\eps}s}\ar M^+(\eps,k,k-r)V_{k-r}(s,\zeta)ds
\end{align}
  \begin{align}\label{ab8}
\begin{split}
&w^-_k(x_2,\zeta)=-\sum_{r\in\ZZ\setminus 0}\int^{x_2}_0e^{i\xi_-(\eps,k)(x_2-s)+ir\frac{\omega_N(\beta_l)}{\eps}s}\ar M^-(\eps,k,k-r)V_{k-r}(s,\zeta)ds-\\
&e^{i\xi_-(\eps,k)x_2}[Br_-(\eps,k)]^{-1}Br_+(\eps,k)\sum_{r\in\ZZ\setminus 0}\int^\infty_{0}e^{i\xi_+(\eps,k)(-s)+ir\frac{\omega_N(\beta_l)}{\eps}s}\ar M^+(\eps,k,k-r)V_{k-r}(s,\zeta)ds+\\
&\qquad \qquad e^{i\xi_-(\eps,k)x_2}[Br_-(\eps,k)]^{-1}\hat G_k(\zeta),
\end{split}
\end{align}
  where the definitions of matrices $M^\pm$ are obvious and $|M^\pm(\eps,k,k-r)|\lesssim 1$. \\

Next, we define 
\begin{align}\label{tu29}
\mathcal{W}_k(x_2,\zeta)=(\tilde w^+_k(x_2,\zeta),w^-_k(x_2,\zeta),\frac{1}{\sqrt{\gamma}}\tilde w^+_k(0,\zeta),\frac{1}{\sqrt{\gamma}}w^-_k(0,\zeta)),
\end{align}
where
\begin{align}
\tilde w^+_k(x_2,\zeta)=\Delta^{-1}(\eps,k)w^+_k(x_2,\zeta).
\end{align}
For each $k$ we  define a modified $L^2$ norm of $V_k$ 
 by\footnote{In \eqref{tu30} the notation $|\cdot|_{L^2}$ means $|\cdot|_{L^2(x_2,\sigma,\eta)}$ for components that depend on $x_2$ and  $|\cdot|_{L^2(\sigma,\eta)}$ for components that do not.}
\begin{align}\label{tu30}
\|V_k\|_k=|\chi_b(\eps,k)\CalW_k(x_2,\zeta)|_{L^2}+\left|\chi_g(\eps,k)\left(V_k(x_2,\zeta),\frac{1}{\sqrt{\gamma}}V_k(0,\zeta)\right)\right|_{L^2}, 
\end{align}
We note that we ultimately want to estimate $|(\|V_k\|)|_{\ell^2(k)}$ of $(V_k)_{k\in\ZZ}$ (theorem \ref{mainthm}).  We usually write $\|V_k\|$  $\|V_k\|_k$.    If $f(\zeta)$ is any function of $\zeta$ then
\begin{align}\label{tuu30}
\|f(\zeta)V_k\|_k:=|\chi_b(\eps,k)f(\zeta)\CalW_k(x_2,\zeta)|_{L^2}+\left|\chi_g(\eps,k)f(\zeta)\left(V_k(x_2,\zeta),\frac{1}{\sqrt{\gamma}}V_k(0,\zeta)\right)\right|_{L^2}.
\end{align}

Note that for $U^\gamma$ as in Theorem \ref{mainthm}
\begin{align}\label{tu31}
|(\|V_k\|)|_{\ell^2(k)}\gtrsim |U^\gamma|_{L^2(t,x,\theta)}+\left|\frac{U^\gamma(0)}{\sqrt{\gamma}}\right|_{L^2(t,x_1,\theta)}.
\end{align}

\subsection{Analysis of Small/Medium Frequency Region} \hfill \\

Fix $0 <\alpha <1$, we now we consider the effect of $\zeta$ small such that  
\begin{equation}\label{f14a}
|\zeta| \lesssim \eps^{\alpha -1} \quad \text{for } 0<\alpha<1
\end{equation}
We recall (definition \ref{defnres}) that the phases $(\phi_j,\phi_N,\phi_i)$ exhibit a resonance if there exist $p,q \in \ZZ \setminus 0$ such that:
\begin{equation}
p\phi_j + q\phi_N = (p+q)\phi_i \iff p\omega_j(\beta_l) + q \omega_N(\beta_l)= (p+q)\omega_i(\beta_l) \iff \frac{p}{q} = \frac{\omega_i(\beta_l)-\omega_j(\beta_l)}{\omega_j(\beta_l)-\omega_N(\beta_l)} = \Omega_{i,j}
\end{equation}
Proposition 4.6 in \cite{W4} showed that it is impossible to have the form of estimates in \eqref{5.12} or \eqref{rdependoneps} if there exists a resonance. This motivates us to make an assumption \ref{omega} and study the case where $p/q$ is away from $\Omega_{i,j}$.

\begin{prop} 
\begin{enumerate}
\item {Thue-Siegel-Roth theorem.}
If $\Omega$ is an algebraic number, then there exists a positive constant $C(x,\delta)$ such that for any $p,q \in \ZZ$, $q \neq 0$, we have:
\begin{align} \label{thsirothm}
\left| \frac{p}{q} - \Omega \right| > \frac{C(x,\delta)}{|q|^{2+\delta}} \quad \delta > 0
\end{align}
\item There is a full measure set of irrational numbers such that \eqref{thsirothm} holds.
\end{enumerate} 
\end{prop}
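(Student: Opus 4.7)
The first assertion is exactly Roth's 1955 theorem on rational approximation of algebraic numbers, so my plan is to invoke it as a black box rather than reproduce its proof—a self-contained argument would be substantially longer than the rest of this section. For the record, the standard strategy is a contradiction argument: assuming infinitely many ``good'' rational approximants $p_j/q_j$ to $\Omega$ with denominators $q_1 \ll q_2 \ll \cdots$ growing fast enough, one uses Siegel's lemma (a pigeonhole argument on an integer coefficient lattice) to manufacture an auxiliary polynomial $P(x_1,\ldots,x_m)$ with integer coefficients, of controlled degree and height, which vanishes to a prescribed high order at the diagonal point $(\Omega,\ldots,\Omega)$. Evaluation at $(p_1/q_1,\ldots,p_m/q_m)$, together with the good approximation hypothesis, forces $P$ to be very small there, while Roth's lemma (a non-vanishing statement proved by induction on the number of variables $m$) provides a matching lower bound; the two bounds are incompatible once $m$ is taken large enough. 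The main obstacle here is Roth's lemma itself, which is the technical heart of the argument and not amenable to a short sketch.

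The second assertion follows from a standard Borel--Cantelli argument. Fix $\delta>0$ and a constant $C>0$; by translation invariance it suffices to work on $[0,1]$. For each integer $q\geq 1$ set
\begin{equation}
B_q := \bigcup_{p=0}^{q} \Bigl( \tfrac{p}{q}-\tfrac{C}{q^{2+\delta}},\ \tfrac{p}{q}+\tfrac{C}{q^{2+\delta}} \Bigr) \cap [0,1],
\end{equation}
so that the Lebesgue measure of $B_q$ is at most $2C(q+1)/q^{2+\delta}\leq 4C/q^{1+\delta}$. Since $\sum_{q\geq 1} 4C/q^{1+\delta}<\infty$, the first Borel--Cantelli lemma gives $|\limsup_{q\to\infty} B_q|=0$. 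Consequently, for Lebesgue-almost every $\Omega\in[0,1]$ only finitely many pairs $(p,q)$ satisfy $|p/q-\Omega|<C/q^{2+\delta}$.

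After discarding the (null) set of rationals, each of the finitely many exceptions contributes a strictly positive value of $|p/q-\Omega|\cdot q^{2+\delta}$; shrinking the constant $C$ to some $C(\Omega,\delta)>0$ that is smaller than the minimum of these positive values yields \eqref{thsirothm} for \emph{every} $(p,q)\in\ZZ\times (\ZZ\setminus 0)$. The complement of the null set so constructed is the advertised full-measure set of irrationals. The only mild subtlety is that the exceptional constant $C(\Omega,\delta)$ depends on $\Omega$, but this is permitted by the statement. No step beyond elementary measure theory is needed here, so the only ``hard part'' of the proposition is Part (1), which I am treating as a citation.
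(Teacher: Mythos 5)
Your proposal is correct: Part (1) is precisely Roth's theorem and is rightly treated as a citation, exactly as the paper does (the paper states this proposition without proof, as a quoted classical result), and your Borel--Cantelli argument for Part (2) is the standard proof and is sound, including the final shrinking of the constant to $C(\Omega,\delta)$ after discarding the null set and the finitely many exceptional pairs. The only cosmetic point is that pairs with $p/q$ outside $[0,1]$ or $q<0$ should be dispatched explicitly (they satisfy the inequality automatically once $C(\Omega,\delta)\leq 1$, and $q<0$ reduces to $q>0$), but this does not affect the validity of the argument.
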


\begin{assump} \label{omega}
$\forall i \in \mathcal{O}, j \in \mathcal{I}\setminus\{N\}$, the irrational number $\Omega_{i,j} = \frac{\omega_i(\beta_l)-\omega_j(\beta_l)}{\omega_j(\beta_l)-\omega_N(\beta_l)}$  (definition \eqref{Omegadefn}) satisfies \eqref{thsirothm}.
\end{assump}


The following lemma is clear.
\begin{lem} \label{epslemma}
For $A,B \in \RR$, there exists $\eps_0$ small such that for $0< \eps < \eps_0$, we have $A\eps^{-\alpha} + B\eps^{-\beta} \sim \eps^{-\beta}$ if $0 <\alpha < \beta$ and $B\neq 0$.
\end{lem}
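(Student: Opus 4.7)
The plan is to factor out the dominant power $\eps^{-\beta}$ and use that the other term becomes negligible as $\eps \to 0^+$. Writing
\begin{equation*}
A\eps^{-\alpha} + B\eps^{-\beta} = \eps^{-\beta}\bigl(A\eps^{\beta-\alpha} + B\bigr),
\end{equation*}
the assumption $0 < \alpha < \beta$ gives $\beta - \alpha > 0$, so $A\eps^{\beta-\alpha} \to 0$ as $\eps \to 0^+$.

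Next, since $B \neq 0$, I would choose $\eps_0 > 0$ small enough that $|A|\eps_0^{\beta-\alpha} \leq |B|/2$. Then for every $0 < \eps < \eps_0$,
\begin{equation*}
\tfrac{1}{2}|B| \ \leq \ |A\eps^{\beta-\alpha} + B| \ \leq \ \tfrac{3}{2}|B|,
\end{equation*}
by the reverse and forward triangle inequalities. Multiplying through by $\eps^{-\beta}$ yields
\begin{equation*}
\tfrac{1}{2}|B|\,\eps^{-\beta} \ \leq \ |A\eps^{-\alpha} + B\eps^{-\beta}| \ \leq \ \tfrac{3}{2}|B|\,\eps^{-\beta},
\end{equation*}
which is exactly the statement $A\eps^{-\alpha} + B\eps^{-\beta} \sim \eps^{-\beta}$ on $(0,\eps_0)$ in the sense of the notation defined earlier in the paper.

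There is no real obstacle here; the only subtlety is that the hypothesis $B \neq 0$ is genuinely needed, since if $B = 0$ the left-hand side is $A\eps^{-\alpha}$ which is strictly smaller in order than $\eps^{-\beta}$ (the lower bound would fail). The hypothesis $0 < \alpha$ itself is not used in the argument above, only $\alpha < \beta$; I would keep the statement as written since this is how it will be invoked in the rest of the paper.
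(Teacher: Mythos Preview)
Your proof is correct and is exactly the standard argument one would give; the paper itself simply declares the lemma ``clear'' and offers no proof at all, so there is nothing further to compare. Your remark that only $\alpha<\beta$ (not $0<\alpha$) is actually used is accurate.
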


\begin{prop} \label{finiter}
Suppose there is no resonance, $X_k \in \Gamma_{\frac{\delta}{N_1|r|}}(\beta_l), X_{k-r}\in \Gamma_{\frac{\delta}{|r|}}(\beta_l)$. We fix arbitrary $0<\alpha < 1$, and then fix arbitrary $0 \leq \xi < \alpha$.  There exists $\eps_0 > 0$ such that for all $0 < \eps < \eps_0$ and $|\zeta| \lesssim \eps^{\alpha -1}$,  there exists $C$ such that we have the following:
\begin{enumerate}
\item For all $r$ such that $|r| \leq M$ for any fixed $M > 0$, we have:
\begin{equation}\label{5.12}
|E_{i,j}(\eps,k,k-r)(\zeta)|\geq C|X_{k-r}(\zeta)| 
\end{equation}
\item For all $|r| \lesssim \eps^{-\xi}$, if $\Omega_{i,j}$ satisfies assumption \ref{omega},  for $j \neq N$ we have: 
\begin{equation}\label{rdependoneps}
|E_{i,j}(\eps,k,k-r)(\zeta)| \geq C\frac{|X_{k}(\zeta)|}{|r|^{2+\delta}}
\end{equation}
\end{enumerate}
\end{prop}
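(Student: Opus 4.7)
The plan is to Taylor-expand $\omega_i(X_k)$ and $\omega_j(X_{k-r})$ about the integer-multiple base frequencies $k\beta_l/\eps$ and $(k-r)\beta_l/\eps$, exploiting the smallness of $\eps|\zeta|\lesssim \eps^\alpha$ relative to $|k\beta_l|$ and $|(k-r)\beta_l|$. Using the degree-one homogeneity of $\omega_\ell$ and the degree-zero homogeneity of $\nabla\omega_\ell$ on $\Gamma_\delta(\beta_l)$ (with signs handled by the extension rule \eqref{s7ff}),
\begin{equation}
\omega_\ell(X_k) = \tfrac{k}{\eps}\omega_\ell(\beta_l) + \nabla\omega_\ell(\beta_l)\cdot\zeta + O\!\bigl(\eps|\zeta|^2/|k|\bigr),
\end{equation}
with an analogous expansion for $\omega_j(X_{k-r})$. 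Substituting into the definition of $E_{i,j}$ yields
\begin{equation}
E_{i,j}(\eps,k,k-r)(\zeta) = \tfrac{1}{\eps}L(k,r) + \bigl[\nabla\omega_i(\beta_l)-\nabla\omega_j(\beta_l)\bigr]\cdot\zeta + O(\eps^{2\alpha-1}),
\end{equation}
where $L(k,r) := k\omega_i(\beta_l) - r\omega_N(\beta_l) - (k-r)\omega_j(\beta_l)$. A short computation using \eqref{Omegadefn} gives the key factorization
\begin{equation}
L(k,r) = \bigl(\omega_i(\beta_l)-\omega_j(\beta_l)\bigr)\bigl[(k-r) - r\,\Omega_{i,j}\bigr],
\end{equation}
so the vanishing of $L$ is precisely the resonance condition $(k-r)/r=\Omega_{i,j}$, excluded by the no-resonance hypothesis.

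For part (1), with $|r|\leq M$ bounded and $\Omega_{i,j}$ irrational, the set $\{r\,\Omega_{i,j}:0<|r|\leq M\}$ is a finite set of irrationals, so its distance to $\ZZ$ is bounded below by some $c_M>0$. Hence $|L|\geq c_M|\omega_i-\omega_j|$ uniformly in $k$, and the main term $L/\eps$ dominates the $O(\eps^{\alpha-1})$ correction for $\eps$ small, giving $|E_{i,j}|\gtrsim 1/\eps$. Splitting on $|k-r|$: when $|k-r|$ is bounded one has $|X_{k-r}|\lesssim 1/\eps\lesssim|E_{i,j}|$; when $|k-r|$ is large the factorization forces $|L|\sim|k-r|$, so $|E_{i,j}|\sim|X_{k-r}|$ directly. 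The special case $j=N$ is handled via the extension rule: here $L(k,r)=k\bigl(\omega_i(\beta_l)-\omega_N(\beta_l)\bigr)$ (modulo sign), which is $\sim|k|$ for $k\neq 0$; the residual $k=0$ case yields $|E_{i,N}(\eps,0,-r)|\sim|r|/\eps\sim|X_{-r}|$ by direct computation.

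For part (2), with $|r|\lesssim \eps^{-\xi}$ and $\Omega_{i,j}$ satisfying Assumption \ref{omega}, apply the Thue--Siegel--Roth bound to the rational $p/q=(k-r)/r$:
\begin{equation}
\bigl|(k-r)/r - \Omega_{i,j}\bigr|\geq \frac{C}{|r|^{2+\delta}},\qquad\text{hence}\qquad |L|\geq \frac{C|\omega_i-\omega_j|}{|r|^{1+\delta}}.
\end{equation}
Choosing $\delta>0$ small enough that $\xi(1+\delta)<\alpha$ (possible since $\xi<\alpha$), the main term $|L|/\eps\gtrsim \eps^{\xi(1+\delta)-1}$ dominates the correction $\eps^{\alpha-1}$ for $\eps$ small, so $|E_{i,j}|\geq C/(\eps|r|^{1+\delta})$. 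The asserted $|E_{i,j}|\geq C|X_k|/|r|^{2+\delta}$ then follows case-by-case: when $|k|\lesssim|r|$, $|X_k|\lesssim |k|/\eps+|\zeta|\lesssim |r|/\eps$, giving $|X_k|/|r|^{2+\delta}\lesssim 1/(\eps|r|^{1+\delta})\lesssim |E_{i,j}|$; when $|k|\gg|r|$, the $(k-r)$ term dominates $r\Omega_{i,j}$, so $|L|\sim|k|$ and $|E_{i,j}|\sim|X_k|\geq |X_k|/|r|^{2+\delta}$ directly.

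The hard part is the simultaneous competition between the Diophantine lower bound $|r|^{-(1+\delta)}$, which deteriorates as $|r|$ grows, and the linear $\zeta$-correction of size $\eps^{\alpha-1}$, which does not vanish as $\eps\to 0$. The twin restrictions $|\zeta|\lesssim \eps^{\alpha-1}$ and $|r|\lesssim \eps^{-\xi}$ with $\xi<\alpha$ are exactly what allow the main term $L/\eps$ to eclipse both simultaneously; relaxing either causes the argument to break down, which is precisely why Theorem \ref{mainthm} is stated only on the small/medium frequency region.
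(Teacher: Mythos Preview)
Your approach is essentially the paper's: both linearize $E_{i,j}$ around the integer base points $k\beta_l/\eps$, $(k-r)\beta_l/\eps$ to extract the leading term $\tfrac{1}{\eps}(\omega_i-\omega_j)\bigl[(k-r)-r\Omega_{i,j}\bigr]$, lower-bound the bracket via irrationality (part~(1)) or Thue--Siegel--Roth (part~(2)), and then check this dominates the $O(\eps^{\alpha-1})$ correction under the restrictions $|\zeta|\lesssim\eps^{\alpha-1}$, $|r|\lesssim\eps^{-\xi}$. The paper phrases the linearization as orthogonal projection onto the line $\{t\beta_l\}$ (defining $\tilde X_k$, $\tilde E_{i,j}$) rather than a Taylor expansion, but these are the same first-order approximation. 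One organizational difference: for the final comparison to $|X_k|$ in part~(2), the paper proves the auxiliary inequality $|k-r-r\Omega_{i,j}|\geq C|k|/|r|^{2+\delta}$ via a two-step ``closest integer'' argument with $k(r)^\pm$, whereas your dichotomy $|k|\lesssim|r|$ versus $|k|\gg|r|$ reaches the conclusion more directly.

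One small gap: your Taylor expansion around $k\beta_l/\eps$ is not available when $k=0$ (and likewise $k-r=0$ for the $\omega_j$ expansion), since you cannot expand around the origin. The paper's projection formulation handles all $k$ uniformly, and then treats $k=0$ explicitly in each part; you should add a line covering these degenerate cases by direct computation, as you already did for $k=0$ in the $j=N$ subcase.
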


\begin{rem}
It is proved in \cite{W4} prop 3.22 that for $j = N$:
\begin{equation} \label{EiN}
|E_{i,N}(\eps,k,k-r)(\zeta)| \geq C\frac{|X_{k}(\zeta)|}{|r|} \text{ or } |E_{i,N}(\eps,k,k-r; \beta)|\geq C|X_{k-r}| 
\end{equation}
\end{rem}

Before the proof of proposition \eqref{finiter}, we first establish a set of notations and results from \cite{W4}:\\

We recall that $X_k = \zeta + k\frac{\beta_l}{\eps}$.  Define $\tilde{X_k}$, the  orthogonal projection of $X_k$ onto the line $\mathcal{L}(\beta_l) = \{t\beta_l: t \in \RR\}$:
\begin{equation} \label{::1}
\tilde{X}_k := (X_k\cdot \beta_l)\beta_l = s\frac{\beta_l}{\eps} + k\frac{\beta_l}{\eps}
\end{equation}  
where $s\frac{\beta_l}{\eps}$ is the orthogonal projection of $\zeta$ on $\beta_l$.
Recall that
\begin{align}
E_{i,j}(\eps,k,k-r) = \omega_i(X_k) - r\frac{\omega_N(\beta_l)}{\eps} - \omega_j(X_{k-r}).
\end{align}
We define:
\begin{align} \label{tildeEtij}
\tilde{E}_{i,j}(\eps,k,k-r) = \omega_i(\tilde{X}_k) - r\frac{\omega_N(\beta_l)}{\eps} - \omega_j(\tilde{X}_{k-r}).
\end{align}
\begin{lem} \cite{W4}
For $\tilde{X}_k$,  $\tilde{E}_{i,j}(\eps,k,k-r)$ defined above, letting $t = s+ k-r$, we have:
\begin{align} \label{Eijskr}
\tilde{E}_{i,j}(\eps,k,k-r) &= \frac{t-t_p}{\eps}(\omega_i(\beta_l)-\omega_j(\beta_l)) := \frac{t-t_p}{\eps}C(\beta_l)\\
&= \frac{s+k-r-r\Omega_{i,j}}{\eps}C(\beta_l)
\end{align}
where $
t_p(\beta_l) = r\Omega_{i,j}(\beta_l)$, $\Omega_{i,j}(\beta_l) := \frac{\omega_i(\beta_l) - \omega_N(\beta_l)}{\omega_j(\beta_l)- \omega_i(\beta_l)}
$, 
and it follows that
\begin{equation} \label{EijgEijtilde}
|E_{i,j}(\eps,k,k-r)| \geq |\tilde{E}_{i,j}(\eps,k,k-r)| - C(|X_k - \tilde{X}_k| + |X_{k-r} - \tilde{X}_{k-r}|).
\end{equation}

\end{lem}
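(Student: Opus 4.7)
The plan is to prove \eqref{Eijskr} by direct computation using homogeneity of $\omega_i, \omega_j$, and to derive \eqref{EijgEijtilde} as a consequence of the Lipschitz regularity of $\omega_i, \omega_j$ on $\Gamma_\delta(\beta_l)$ together with the reverse triangle inequality. Neither step involves a substantial obstacle; the content is to verify that the algebra produces the stated factorization and that the scalar $\Omega_{i,j}$ appears with the correct sign.

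First I would use the definition \eqref{::1} to write $\tilde{X}_k = \frac{s+k}{\eps}\beta_l$ and $\tilde{X}_{k-r} = \frac{s+k-r}{\eps}\beta_l$. Since the extensions of $\omega_i, \omega_j$ constructed in Section 4.1 are homogeneous of degree one on $\Gamma_\delta(\beta_l)$ (and real-valued in the relevant directions along the line $\mathcal{L}(\beta_l)$), this gives
\begin{align}
\omega_i(\tilde{X}_k)=\tfrac{s+k}{\eps}\omega_i(\beta_l),\qquad \omega_j(\tilde{X}_{k-r})=\tfrac{s+k-r}{\eps}\omega_j(\beta_l).
\end{align}
Substituting into \eqref{tildeEtij} and combining like terms yields
\begin{align}
\tilde{E}_{i,j}(\eps,k,k-r)=\tfrac{1}{\eps}\bigl[(s+k-r)(\omega_i(\beta_l)-\omega_j(\beta_l))+r(\omega_i(\beta_l)-\omega_N(\beta_l))\bigr].
\end{align}
With $C(\beta_l):=\omega_i(\beta_l)-\omega_j(\beta_l)$ and $\Omega_{i,j}(\beta_l)=\frac{\omega_i(\beta_l)-\omega_N(\beta_l)}{\omega_j(\beta_l)-\omega_i(\beta_l)}$, the second bracketed term equals $-r\Omega_{i,j}C(\beta_l)$, so setting $t=s+k-r$ and $t_p=r\Omega_{i,j}$ produces the first equality of \eqref{Eijskr}. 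Rewriting $t-t_p=s+k-r-r\Omega_{i,j}$ gives the second equality.

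For the perturbation inequality \eqref{EijgEijtilde}, I would subtract the definitions of $E_{i,j}$ and $\tilde{E}_{i,j}$: the common term $r\omega_N(\beta_l)/\eps$ cancels, leaving
\begin{align}
E_{i,j}-\tilde{E}_{i,j}=\bigl(\omega_i(X_k)-\omega_i(\tilde{X}_k)\bigr)-\bigl(\omega_j(X_{k-r})-\omega_j(\tilde{X}_{k-r})\bigr).
\end{align}
Since $\omega_i$ and $\omega_j$ are $C^\infty$ and homogeneous of degree one on $\Gamma_\delta(\beta_l)$, their gradients are homogeneous of degree zero, hence bounded on the cone, so $\omega_i$ and $\omega_j$ are Lipschitz on $\Gamma_\delta(\beta_l)$ with a uniform constant $C$. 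The hypotheses of the preceding discussion in \eqref{cases} place $X_k$ and $X_{k-r}$ inside $\Gamma_\delta(\beta_l)$, and the projections $\tilde{X}_k,\tilde{X}_{k-r}$ lie on the ray through $\beta_l$ inside the same cone, so the Lipschitz bound applies. Thus
\begin{align}
|E_{i,j}-\tilde{E}_{i,j}|\leq C|X_k-\tilde{X}_k|+C|X_{k-r}-\tilde{X}_{k-r}|,
\end{align}
and the reverse triangle inequality gives \eqref{EijgEijtilde}.

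The only delicate point is ensuring that the homogeneity identity $\omega_j(\lambda\beta_l)=\lambda\omega_j(\beta_l)$ is valid for all real scalars $\lambda$ of the sign taken by $s+k$ or $s+k-r$; this is guaranteed by the symmetric extension \eqref{s7ff} of $\omega_j$ to $\Gamma_\delta(\beta_l)=\Gamma_\delta^+(\beta_l)\cup\Gamma_\delta^-(\beta_l)$ quoted from \cite{W4}. Beyond that the argument is purely algebraic plus a uniform Lipschitz bound, so no quantitative estimates on $|r|,\eps$, or the Diophantine behavior of $\Omega_{i,j}$ enter at this stage.
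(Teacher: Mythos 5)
Your argument is correct and coincides with the paper's own proof (taken from \cite{W4}): the identity \eqref{Eijskr} is obtained exactly as you do, by degree-one homogeneity along the line $\mathcal{L}(\beta_l)$, identifying the zero $t_p=r\Omega_{i,j}$ of the resulting affine expression in $t=s+k-r$, and factoring; the inequality \eqref{EijgEijtilde} then follows, as in the paper, from the Lipschitz bound $|\omega_\ell(X)-\omega_\ell(\tilde X)|\lesssim|X-\tilde X|$ for the homogeneous, smooth eigenvalues together with the reverse triangle inequality. No further comment is needed.
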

Now we're ready to prove proposition \ref{finiter}:

\begin{proof} of proposition \ref{finiter}. \\
\textit{(1) The case where there are finitely many $r$'s ($|r| \leq M$)}.\\

Since there is no resonance, $\Omega_{i,j}$ is irrational. Picking $k_0,r_0$ such that 
\begin{equation}
C(\Omega_{i,j},M) := \left|\frac{k_0-r_0}{r_0} - \Omega_{i,j}\right| \leq \left|\frac{k-r}{r}- \Omega_{i,j}\right| \text{ for all } k \in \ZZ, |r| \leq M
\end{equation}
we have:
\begin{align} \label{452}
\frac{|k-r-r\Omega_{i,j}|}{\eps} \geq C(\Omega_{i,j},M)\eps^{-1}.
\end{align}
We rewrite equation \eqref{Eijskr} as:
\begin{equation}
\begin{aligned}
\tilde{E}_{i,j}(\eps,k,k-r)(\zeta)=
\left(\frac{s}{\eps}\right)C(\beta_l) + \left(\frac{k -r-r\Omega_{i,j}}{\eps}\right)C(\beta_l).
\end{aligned}
\end{equation}
By lemma \ref{epslemma},  inequality \eqref{452}, and $\frac{s}{\eps} \sim \zeta \lesssim \eps^{\alpha-1}$, the second term above absorbs the first, that is
\begin{align}
|\tilde{E}_{i,j}(\eps,k,k-r)| \gtrsim \frac{|k-r-r\Omega_{i,j}|}{\eps}.
\end{align}  
Using equation \eqref{EijgEijtilde}:
\begin{equation} 
\begin{aligned}
|E_{i,.j}(\eps,k,k-r)| &\geq |\tilde{E}_{i,j}(\eps,k,k-r)| - C(|X_k-\tilde{X}_k| + |X_{k-r}-\tilde{X}_{k-r}|) 
\end{aligned}
\end{equation}
and
\begin{equation} 
|\tilde{X}_{k-r} - X_{k-r}| = |\tilde{X}_k - X_k| = \left|\frac{s}{\eps} -\zeta\right| \sim \eps^{\alpha-1}
\end{equation}
we have by lemma \ref{epslemma}
\begin{align} \label{4.61}
|{E}_{i,j}(\eps,k,k-r)| \gtrsim \frac{|k-r-r\Omega_{i,j}|}{\eps}.
\end{align}
If $k = 0$, we have 
\begin{align}
|X_{k-r}|  \lesssim \frac{|k-r|}{\eps} \sim \frac{|r|}{\eps},\quad
|{E}_{i,j}(\eps,k,k-r)| \gtrsim \frac{|k-r-r\Omega_{i,j}|}{\eps} \sim \frac{|r|}{\eps}
\end{align}
which proves the proposition. If $k \neq 0$, we note that for every $r$, there is exact one $k(r) \neq 0$ such that 
\begin{align}
\left|1-\frac{r+r\Omega_{i,j}}{k(r)}\right| \leq \left|1-\frac{r+r\Omega_{i,j}}{k}\right| \quad \text{ for } k \in \ZZ \setminus 0.
\end{align}
We define 
\begin{equation}
C_0 = \inf_{|r| \leq M} \left|\frac{k(r)-r-r\Omega_{i,j}}{k(r)}\right|
\end{equation}
so 
\begin{equation}
C_0|k| \leq \left|1-\frac{r+r\Omega_{i,j}}{k}\right||k| = |k-r-r\Omega_{i,j}| \text{ for all } k.
\end{equation}
Equations \eqref{4.61} then gives
\begin{equation}
|{E}_{i,j}(\eps,k,k-r)| \gtrsim \frac{|k-r-r\Omega_{i,j}|}{\eps} \geq C_0\frac{|k|}{\eps}.
\end{equation}
Also using
\begin{align} 
|X_{k-r}| \lesssim \frac{|k-r|}{\eps} \leq \frac{|k|}{\eps} + \frac{M}{\eps} \leq (M+1)\frac{|k|}{\eps}
\end{align}
above two equations show that \eqref{5.12} holds, which finishes the proof of part (1) of the proposition. \\

\textit{(2) The case where the number of $r$'s depend on $\eps$, $|r| \leq C_0\eps^{-\xi}$,  for $0 \leq \xi < \alpha$}.\\

Since there is no resonance, $\Omega_{i,j}$ is irrational. If $\Omega_{i,j}$ further satisfies assumption \ref{omega},  \eqref{thsirothm} implies that
\begin{align} \label{diop}
\left|\frac{k-r}{r} - \Omega_{i,j}\right| \geq \frac{C(\Omega_{i,j}, \delta)}{|r|^{2+\delta}} \quad \text{ for } \delta >0.
\end{align}
Using $|r| \leq C_0\eps^{-\xi}$, we have:
\begin{align} 
\frac{|k-r-r\Omega_{i,j}|}{\eps} \geq \frac{C(\Omega,_{i,j} \delta)}{\eps|r|^{1+\delta}} \gtrsim \eps^{\xi+\xi\delta-1}.
\end{align}
We write $\tilde{E}_{i,j}$ in the same way as before: 
\begin{equation}
\begin{aligned}
\tilde{E}_{i,j}(\eps,k,k-r)(\zeta)=
\left(\frac{s}{\eps}\right)C(\beta_l) + \left(\frac{k -r-r\Omega_{i,j}}{\eps}\right)C(\beta_l).
\end{aligned}
\end{equation}
Since $0 \leq \xi < \alpha$, $\xi+\xi\delta-1 < \alpha -1$ for $\delta$ small,  by lemma \ref{epslemma}, the second term absorbs the first, that is
\begin{align} \label{4.71}
|\tilde{E}_{i,j}(\eps,k,k-r)| \gtrsim \frac{|k-r-r\Omega_{i,j}|}{\eps}.
\end{align}  
Since we have
\begin{equation} \label{hdistxk}
|\tilde{X}_{k-r} - X_{k-r}| = |\tilde{X}_k - X_k| = \left|\frac{s}{\eps} -\zeta\right| \sim \eps^{\alpha-1}
\end{equation} 
and equation  \eqref{EijgEijtilde}
\begin{equation}  \label{4.73}
\begin{aligned}
|E_{i,.j}(\eps,k,k-r)| &\geq |\tilde{E}_{i,j}(\eps,k,k-r)| - C(|X_k-\tilde{X}_k| + |X_{k-r}-\tilde{X}_{k-r}|) 
\end{aligned}
\end{equation}
we obtain using lemma \ref{epslemma} and equations \eqref{4.71},\eqref{hdistxk}, \eqref{4.73} :
\begin{align}
|{E}_{i,j}(\eps,k,k-r)| \gtrsim \frac{|k-r-r\Omega_{i,j}|}{\eps}.
\end{align}
Suppose $k = 0$, above equation becomes $|{E}_{i,j}(\eps,k,k-r)|\gtrsim \eps^{\xi + \xi \delta -1}$ by \eqref{diop}, and we have $|X_k| = |\zeta| \sim \eps^{\alpha-1}$.  Thus, lemma \ref{epslemma} implies the desired inequality.\\
Now for $k \neq 0$, we claim that there exists $C$ independent of $k,r,\eps$ such that: 
\begin{equation}
|k-r-r\Omega_{i,j}| \geq C\frac{|k|}{|r|^{2+\delta}}.
\end{equation}
Observe that by lemma \ref{epslemma} we have:
\begin{align} 
|X_{k}| &= \left|\zeta + k\frac{\beta_l}{\eps}\right| \lesssim \frac{|k|}{\eps}.
\end{align}
Therefore, the claim and above equation directly imply the result of proposition part (2).  Now It is only left to prove the claim. \\

\textit{Proof of claim.}

For each $r$ such that $|r| \leq C_0\eps^{-\xi}$. Let $k^+(r),k^-(r)$ be the integers such that 
\begin{align} \label{kclosetor}
&(a)\ k^+(r) -r - r \Omega_{i,j} > 0,\quad
k^-(r) -r - r \Omega_{i,j} < 0.\\
&(b)\ |k^+(r) - k^-(r)| = 1.\\
&(c)\ \text{One of } k^+(r), k^-(r) \text{ satisfies } |k^\pm(r)-r - r\Omega_{i,j}| = \min_{k\in \ZZ}|k-r-r\Omega_{i,j}|.
\end{align}
Since there is no resonance and  $\Omega_{i,j}$ is irrational,  $|k(r)^{\pm}-r - r\Omega_{i,j}| > 0$. The proof of the claim has two steps:

\textit{Step (1).}  We claim that for all $r$ and for $k(r) = k(r)^+$ satisfying \eqref{kclosetor} (the same argument shows the case when \textbf{$k(r) = k(r)^-$}),  there is a constant $C$ independent of $r$ such that:
\begin{equation} \label{firststep}
C\frac{|k(r)|}{|r|^{2+\delta}} \leq |k(r)-r-r\Omega_{i,j}|.
\end{equation}
Using \eqref{diop}, it is sufficient to show:
\begin{equation}
C\frac{|k(r)|}{|r|^{2+\delta}} \leq \frac{C(\Omega_{i,j},\delta)}{|r|^{1+\delta}}.
\end{equation}
If $k(r) = 0$, above equation obviously holds, otherwise it holds if:
\begin{equation}
C \leq \frac{|r|}{|k(r)|}.
\end{equation}
By the definition of $k(r)^+$, as $r \rightarrow \infty$, $\frac{|r|}{|k(r)|}\rightarrow \left|\frac{1}{1+\Omega_{i,j}}\right|$. Thus, there exists a positive constant $C$ independent of $r$ such that above equation holds, and thus \eqref{firststep} holds.\\



\textit{Step (2).} For a fixed $r = r_0,\ k_0 = k(r_0)^+$, we claim that for every $k \geq k_0^+$:
\begin{equation} \label{secondstep}
 C\frac{|k|}{|r_0|^{2+\delta}} \leq |k-r_0-r_0\Omega_{i,j}|.
\end{equation}
Let $z = k- k_0$,  we rewrite above equation as:
\begin{equation} \label{secondequation}
C\frac{|k_0+z|}{|r_0|^{2+\delta}} \leq |k_0 + z -r_0-r_0\Omega_{i,j}|.
\end{equation}
We observe that the left hand side satisfies
\begin{equation}
C\frac{|k_0+z|}{|r_0|^{2+\delta}} \leq C\frac{|k_0|}{|r_0|^{2+\delta}}+C\frac{|z|}{|r_0|^{2+\delta}} \leq |k_0-r_0-r_0\Omega_{i,j}| + C|z|.
\end{equation}
Let $x = k_0 - r_0 -r_0\Omega_{i,j}$. It is sufficient to show
\begin{equation}
|x| + C|z| \leq |x+z|.
\end{equation}
If $z = 0$ it's obviously true. Otherwise, shrink $C$ if necessary so that:
\begin{equation} \label{xzsamesign}
C \leq \frac{|x+z|-|x|}{|z|}.
\end{equation}
The definition of $k(r_0)^+$ implies $x >0$, and thus for every $z > 0$, the right hand side of above equation equals to 1 and is thus satisfied by the same $C$ established in \textit{proof of claim step (1)} (for $C<1$). Thus equation \eqref{secondstep} holds for all $k > k_0^+$.\\

Using the same argument in this step, we can show that for all $k < k_0^-$, \eqref{secondstep} holds. Since $|k_0^+ - k_0^-| = 1$, for all integers $k$, equation \eqref{secondstep} holds. This finishes the proof of the claim, and thus the proposition.


\end{proof}

\begin{rem}
Whenever we used lemma \ref{epslemma} to absorb a smaller term,  the constant and $\eps_0$ we chose only depend on the upper bound of the coefficient of the term being absorbed. Therefore since we are only absorbing terms whose coefficient is bounded above, the constant and $\eps_0$ chosen does not dependent on $r,k$.
\end{rem}

\subsection{Uniform Estimation Theorem} \label{secestimate}\hfill \\

Let $\chi(\sigma,\gamma, \eta)$ be the characteristic function on the set $|\zeta| \leq \eps^{\alpha-1}$, that is:
\begin{align} \label{chi}
\chi(\sigma,\gamma, \eta) = 
\begin{cases}
1  \quad |\zeta| \leq \eps^{\alpha-1}\\
0 \quad \text{otherwise}
\end{cases}
\end{align}
Let $\chi_D$ be the Fourier multiplier associated to $\chi(\sigma,\gamma, \eta)$, ie. $\chi_D$ is defined by:
\begin{equation} \label{chiD}
\widehat{(\chi_D U^\gamma)}_k = \chi(\sigma,\gamma, \eta) V_k = \chi(\sigma,\gamma, \eta)\widehat{U}_k
\end{equation}

Now we are ready to prove the iteration estimate for the singular system \eqref{i9} on the small/medium frequency region which is:
\begin{align}\label{i9'}
\begin{split}
&D_{x_2}V_k-\mathcal{A}(X_k)V_k=i\sum_{r\in\ZZ\setminus 0}  \alpha_r e^{ir\frac{\omega_N(\beta_l)}{\eps}x_2}B_2^{-1}MV_{k-r}+\chi\widehat{F_k}(x_2,\zeta)\\
&BV_k=\chi\widehat{G_k}(\zeta)\text{ on }x_2=0.
\end{split}
\end{align}

\begin{rem}
At this point, we don't know the existence of solutions to the system \eqref{i9'}. Therefore, the following two estimates \eqref{t30c} and \eqref{mainestimate} will be presented as a priori estimates. 
\end{rem}

\begin{rem}
We here point out again that the assumption of more than one incoming modes $\phi_{N-p},...\phi_N$ raised new difficulties in the analysis that the theorem 2.12 in \cite{W4} didn't treat. The major differences between the proof of the uniform estimate in this paper and the proof in \cite{W4} are the following,  where we take advantage of the small/medium frequency cutoff $\chi$.
\begin{enumerate}
\item Using proposition  \ref{finiter}, we "control" the amplification factor for $|r| \leq C_0\eps^{-\xi}$ in step (4) of the proof of proposition \ref{tt30}.  
\item We control the amplification factor for the  remaining $r$'s using the decay of the Fourier coefficients of $\mathcal{D}$, namely $\alpha_r$.  This is done in the proof of the main estimation theorem \ref{mainthm}.
\end{enumerate}
\end{rem}

\begin{prop}[Iteration estimate]\label{tt30}
Let $U^\gamma(t,x,\theta) \in H^2(\Omega \times \mathbb{T})$ for $\gamma \geq 1$ and set  $V_k(x_2,\zeta) = \widehat{U}_k(\zeta,x_2)$.  Assume that $\Upsilon_0 = \{\beta_l, -\beta_l\}$,  and assumptions \ref{omega}, \ref{stricthyperbolicity}, \ref{BNp}, and \ref{lopassump} are satisfied. 
There exist positive constants $C$, $\gamma_0$ such that for $\gamma\geq \gamma_0$, $V_k$ satisfies for all $k\in\ZZ$,
\begin{equation}\label{t30c}
||\chi V_k||\leq \frac{C}{\gamma}\sum_{r\in\ZZ\setminus 0}\sum_{t\in \ZZ}||\alpha_r\alpha_t\DD(\eps,k,k-r)\chi V_{k-r-t}||+\frac{C}{\gamma^2}\left|\chi\widehat{F_k}|X_k|\right|_{L^2}+\frac{C}{\gamma^{3/2}}\left|\chi \widehat{G_k}|X_k|\right|_{L^2}
\end{equation}
with $\widehat{F_k}(x_2,\zeta)$ and $\widehat{G_k}(\zeta)$ defined as:
\begin{align}
\begin{split}
\widehat{F_k}(x_2,\zeta) := &D_{x_2}V_k-\mathcal{A}(X_k)V_k - i\sum_{r\in\ZZ\setminus 0}  \alpha_r e^{ir\frac{\omega_N(\beta_l)}{\eps}x_2}B_2^{-1}MV_{k-r}\\
\widehat{G_k}(\zeta):= &BV_k(0,\zeta)
\end{split}
\end{align}
Here we redefine $\alpha_0$ to be $1$. \footnote{In \eqref{t30c} $\|V_k\|=\|V_k\|_k$ and  $\|\alpha_r\alpha_t \DD(\eps,k,k-r)V_{k-r-t}\|=\|\alpha_r\alpha_t \DD(\eps,k,k-r)V_{k-r-t}\|_{k-r-t}$.}
\end{prop}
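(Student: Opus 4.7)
The plan is to derive the estimate by applying Proposition \ref{g1z} to the integral representations \eqref{a5}, \eqref{a6}, \eqref{ab7}, \eqref{ab8} for $w_k^{\pm}$, after appropriately accounting for the bad boundary factor $[Br_-(\eps,k)]^{-1}$ via the integration by parts argument based on $E_{i,j}$. First I would split $V_k = \chi_b(\eps,k) V_k + \chi_g(\eps,k) V_k$. On the "good" region $\chi_g(\eps,k)$, Lemma \ref{t6}(c) gives $|[Br_-(\eps,k)]^{-1}| \leq C(\delta)$, so one simply substitutes $V_k = S(\eps,k)w_k$, bounds each component of \eqref{ab7}–\eqref{ab8} directly using Proposition \ref{g1z}(a)–(d) and the bound $|M^{\pm}| \lesssim 1$, and absorbs everything with the $\DD = 1$ convention of Definition \ref{t29}. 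This already yields the desired $\frac{1}{\gamma}$, $\frac{1}{\gamma^2}$, and $\frac{1}{\gamma^{3/2}}$ gains from the $L^2$ integral/trace inequalities, and produces only a single sum over $r$ (the $t$-sum being trivially absorbed by $\alpha_0 = 1$).

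The work is on $\chi_b(\eps,k)$, where by Lemma \ref{t6}(b) we have $|[Br_-(\eps,k)]^{-1}| \lesssim |X_k|/|\tau - c_+(\beta_l)\eta|$ which can be as large as $O(\eps^{-1})$. Here I would use the decomposition $\chi_b(\eps,k) = \sum_r \chi_{(I)} + \chi_{(II)} + \chi_{(III)}$ corresponding to the three cases in \eqref{cases}. On $\chi_{(III)}$, Lemma \ref{t6}(g) already gives $|\Delta(\eps,k)|^{-1} \lesssim |r|$, so direct application of Proposition \ref{g1z} produces an amplification of at most $C_5|r|$, matching $D$ and thus $\DD$. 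On $\chi_{(II)}$, Lemma \ref{c9a} shows $|X_{k-r}| \lesssim |X_k|/N_1$, which lets one absorb the integrals from \eqref{a6} by choosing $N_1$ large (again producing only the $C_5|r|$ amplification). In both cases only single sums in $r$ appear.

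The hardest step is $\chi_{(I)}$, which is the target of the integration by parts. Here I would split the boundary term in \eqref{a6} further via $e^{-iE_{i,j}s} = -\frac{1}{i}E_{i,j}^{-1}\frac{d}{ds}e^{-iE_{i,j}s}$ — after replacing $w^{\pm}_{k-r}$ in the integrand by $e^{-i\xi_{\pm}(\eps,k-r)x_2}$ times a slowly-varying factor — and integrate by parts once. The boundary contribution at $s=0$ cancels certain pieces, while the integrated derivative term forces a differentiation of $w^{\pm}_{k-r}$, which by \eqref{a3a} and reapplication of \eqref{ab7}–\eqref{ab8} introduces a second sum $\sum_t \alpha_t V_{k-r-t}$ in the estimate — this is the origin of the double sum in \eqref{t30c}. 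The resulting amplification becomes $|\Delta(\eps,k)|^{-1}|E_{i,j}(\eps,k,k-r)|^{-1}$. For $|r| \leq \eps^{-\xi}$, Proposition \ref{finiter}(2) gives $|E_{i,j}| \gtrsim |X_k|/|r|^{2+\delta}$ (using the Thue–Siegel–Roth bound for $j \neq N$ and the $W4$ bound \eqref{EiN} for $j = N$), which together with Lemma \ref{t6}(b) yields an amplification bounded by $C_5|r|^{2+\delta}$; for $|r| > \eps^{-\xi}$ we abandon the integration by parts and use Lemma \ref{t6}(b) directly together with $1/|\tau - c_+\eta| \lesssim 1/\gamma$, giving the $C_5|r|/(\eps\gamma)$ case of $\DD$. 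Collecting these cases, using \eqref{t30}, and combining with the source and boundary contributions of \eqref{ab7}–\eqref{ab8} (via Proposition \ref{g1z}(a), (c)–(d)) yields \eqref{t30c}. The main obstacle will be bookkeeping: proving that after integration by parts one can uniformly absorb the derivative term into the $\DD V_{k-r-t}$ sum without re-introducing uncontrolled factors of $|X_k|/|X_{k-r}|$ — here the small/medium frequency hypothesis $|\zeta| \lesssim \eps^{\alpha-1}$ in Proposition \ref{finiter} is essential, since without it the $E_{i,j}$ lower bounds degenerate.
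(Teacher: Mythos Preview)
Your proposal is correct and follows essentially the paper's approach: split into $\chi_g$ and the three $\chi_b^i$ regions of \eqref{decomp_chi}, invoke \cite{W4} for $\chi_g$, $\chi_b^2$, $\chi_b^3$ and the $w^+_{k-r}$ part of $\chi_b^1$, and handle the new $w^-_{k-r}$ piece on $\chi_b^1$ via the $E_{i,j}$ integration by parts combined with the lower bounds of Proposition \ref{finiter} (splitting on $|r| \lessgtr \eps^{-\xi}$), with the double sum arising exactly as you describe from differentiating $w^-_{k-r}$ and substituting the right side of \eqref{a3a}. One small imprecision: in case (II) the actual mechanism (from \cite{W4}) is an integration by parts against the matrix $E_i = (\omega_i(X_k) - r\omega_N(\beta_l/\eps))I_N - \mathcal{A}(X_{k-r})$, whose invertibility with $|E_i^{-1}| \lesssim 1/|X_k|$ is what Lemma \ref{c9a} and the choice of $N_1$ provide, rather than a direct absorption argument---but this does not affect the correctness of your outline.
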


The proof of proposition \ref{t30c} relies on a decomposition of the support of $\chi_b(\eps,k)$. For a given $r\in\ZZ\setminus 0$ we write :
\begin{align}
\chi_b(\eps,k)=\chi^1_b(\eps,k,k-r)+\chi^2_b(\eps,k,k-r)+\chi^3_b(\eps,k,k-r),
\end{align}
where $\chi^i_b(\eps,k,k-r)$, $i=1,2,3$ are respectively the characteristic functions of 
\begin{align} \label{decomp_chi}
\begin{split}
&\mathcal{A}_1(\eps,k,k-r):=\{\zeta\in\Xi: X_k\in\Gamma_{\frac{\delta}{N_1|r|}}(\beta_l), X_{k-r}\in\Gamma_{\frac{\delta}{|r|}}(\beta_l)\}\\
&\mathcal{A}_2(\eps,k,k-r):=\{\zeta\in\Xi: X_k\in\Gamma_{\frac{\delta}{N_1|r|}}(\beta_l), X_{k-r}\notin\Gamma_{\frac{\delta}{|r|}}(\beta_l)\}\\
&\mathcal{A}_3(\eps,k,k-r):=\{\zeta\in\Xi: X_k\in \Gamma_\delta(\beta_l)\setminus \Gamma_{\frac{\delta}{N_1|r|}}(\beta_l)\}.
\end{split}
\end{align}
for $N_1 \in \NN$ large and $\delta$ small. 

Now we summarize the related work done in \cite{W4} below:
\begin{lem} \cite{W4} \label{wiites} Under the same condition of proposition \eqref{tt30},  the following are true:
\begin{enumerate}
\item It is sufficient to prove \eqref{t30c} in the case where $F = 0$.
\item Equation \eqref{t30c} holds on the support of $\chi_g$, $\chi_b^2$,  and $\chi_b^3$ that is:
\begin{align} \label{chigb2b3}
||\chi_g(\eps,k )\chi V_k||&\leq \frac{C}{\gamma}\sum_{r\in\ZZ\setminus 0}\sum_{t\in \ZZ}||\alpha_r\alpha_t\DD(\eps,k,k-r)\chi V_{k-r-t}||+\frac{C}{\gamma^2}\left|\chi\widehat{F_k}|X_k|\right|_{L^2}+\frac{C}{\gamma^{3/2}}\left|\chi \widehat{G_k}|X_k|\right|_{L^2}\\
||\chi_b^2(\eps,k )\chi V_k||&\leq \frac{C}{\gamma}\sum_{r\in\ZZ\setminus 0}\sum_{t\in \ZZ}||\alpha_r\alpha_t\DD(\eps,k,k-r)\chi V_{k-r-t}||+\frac{C}{\gamma^2}\left|\chi\widehat{F_k}|X_k|\right|_{L^2}+\frac{C}{\gamma^{3/2}}\left|\chi \widehat{G_k}|X_k|\right|_{L^2}\\
||\chi_b^3(\eps,k )\chi V_k||&\leq \frac{C}{\gamma}\sum_{r\in\ZZ\setminus 0}\sum_{t\in \ZZ}||\alpha_r\alpha_t\DD(\eps,k,k-r)\chi V_{k-r-t}||+\frac{C}{\gamma^2}\left|\chi\widehat{F_k}|X_k|\right|_{L^2}+\frac{C}{\gamma^{3/2}}\left|\chi \widehat{G_k}|X_k|\right|_{L^2}
\end{align}
\item When $F = 0$, we have
\begin{align}\label{t33}
\begin{split}
&(a) |\chi_b(\eps,k)\tilde w^+_k|_{L^2}\leq \frac{C}{\gamma}\sum_{r\in\ZZ\setminus 0}\left(\left|\frac{\chi_b(\eps,k)\ar w^+_{k-r}}{\Delta(\eps,k)}\right|_{L^2}+\left|\frac{\chi_b(\eps,k)\ar w^-_{k-r}}{\Delta(\eps,k)}\right|_{L^2}\right),\\
&(b) |\chi_b(\eps,k)w^-_k|_{L^2}\leq \frac{C}{\gamma}\sum_{r\in\ZZ\setminus 0}\left(\left|\frac{\chi_b(\eps,k)\ar w^+_{k-r}}{\Delta(\eps,k)}\right|_{L^2}+\left|\frac{\chi_b(\eps,k)\ar w^-_{k-r}}{\Delta(\eps,k)}\right|_{L^2}\right)+\frac{C}{\gamma^{3/2}}\left|\widehat{G_k}|X_k|\right|_{L^2(\sigma,\eta)}.
\end{split}
 \end{align}
\item When $F = 0$, we can replace (not dominate) $\frac{C}{\gamma}\left|\frac{\chi^1_b(\eps,k,k-r)\ar w^+_{k-r}}{\Delta(\eps,k)}\right|_{L^2} $ the term on the right of \eqref{t33} by:
\begin{align} \label{step4}
\frac{C}{\gamma}\sum_{t\in\ZZ}\left|\ar\at D(\eps,k,k-r;\beta_l)\CalW_{k-r-t}\right|_{L^2}.
\end{align}
\end{enumerate}
\end{lem}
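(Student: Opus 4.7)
First I would handle assertion (1) by a Duhamel-type argument. The plan is to split $V_k = V_k^p + \tilde V_k$, where $V_k^p$ is a particular solution of $D_{x_2}V_k^p - \mathcal{A}(X_k)V_k^p = \chi\widehat{F_k}$ with vanishing trace at $x_2=0$, built via the diagonalization $S(\eps,k)$ and the semigroups $e^{i\xi_\pm(\eps,k)x_2}$; then $\tilde V_k$ solves the $F=0$ problem with adjusted boundary data $\chi\widehat{G_k}-BV_k^p(0)$. Applying parts (a)--(d) of Proposition \ref{g1z} to the integral representation of $V_k^p$, and using $|\omega_j|\lesssim |X_k|$ together with $|\mathrm{Im}\,\omega_j|\gtrsim\gamma$ from Lemma \ref{t6}(f), one obtains a bound of the form $\|\chi V_k^p\| \lesssim \gamma^{-2}\||X_k|\chi\widehat{F_k}\|_{L^2} + \gamma^{-3/2}\||X_k|\chi BV_k^p(0)\|_{L^2(\sigma,\eta)}$; this reduces \eqref{t30c} to the $F=0$ case after folding the $BV_k^p(0)$ contribution into the boundary data.

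For assertion (2) I would treat each region separately. On $\mathrm{supp}\,\chi_g(\eps,k)$ the uniform Lopatinski condition holds ($X_k$ lies away from $\Upsilon_0$), so the standard Kreiss-style estimate applies with amplification $\DD(\eps,k,k-r)=1$ as prescribed by Definition \ref{t29}. On $\mathrm{supp}\,\chi_b^2(\eps,k,k-r)$ (case (II) of \eqref{cases}), Lemma \ref{c9a} gives $|X_{k-r}|\lesssim N_1^{-1}|X_k|$; carrying this smallness through the integral formulas \eqref{a5}--\eqref{a6} via Proposition \ref{g1z} and the uniform bounds $|a|,|b|,|c|,|d|\lesssim 1$ from \eqref{a7} yields the estimate with the amplification $C_5|r| = D(\eps,k,k-r) \le \DD(\eps,k,k-r)$. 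On $\mathrm{supp}\,\chi_b^3(\eps,k,k-r)$ (case (III)), Lemma \ref{t6}(g) gives the direct bound $|\Delta(\eps,k)|^{-1}\lesssim|r|$, so the $\Delta^{-1}$ factor already appearing in \eqref{a6} is dominated by $C_5|r|=D(\eps,k,k-r)\le\DD(\eps,k,k-r)$, and the same integral estimates close the loop.

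For assertion (3), which isolates the remaining region $\mathrm{supp}\,\chi_b^1(\eps,k,k-r)$, I would read the bounds directly off \eqref{a5}--\eqref{a6} with $F=0$. For $\tilde w_k^+=\Delta^{-1}(\eps,k)w_k^+$ I apply Proposition \ref{g1z}(b) to \eqref{a5}, use $|a|,|b|\lesssim 1$ and $\mathrm{Im}\,\omega_j\leq-c\gamma$ for $j\in\mathcal{O}$, and premultiply by $\Delta^{-1}(\eps,k)$ to obtain (a). For $w_k^-$, the first line of \eqref{a6} is handled identically by Proposition \ref{g1z}(a); the middle two lines produce the $\Delta^{-1}$ factor via $|[Br_-]^{-1}|\lesssim|\Delta|^{-1}$ from Lemma \ref{t6}(b) combined with Proposition \ref{g1z}(c)--(d); and the final line contributes the $\gamma^{-3/2}\||X_k|\widehat{G_k}\|$ term by Proposition \ref{g1z}(d) together with $|[Br_-]^{-1}|\lesssim|X_k|/|\tau-c_+(\beta_l)\eta|$.

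The main obstacle is assertion (4), where one must eliminate the $\Delta^{-1}$ amplification in case (I) and replace it by the finite factor $D(\eps,k,k-r)$. The strategy is an integration by parts in $s$ in the inner integral appearing in the last term of \eqref{a6} (and analogously in \eqref{a5}), exploiting the componentwise identity $e^{-iE_{i,j}(\eps,k,k-r)s} = -(iE_{i,j}(\eps,k,k-r))^{-1}\tfrac{d}{ds}e^{-iE_{i,j}(\eps,k,k-r)s}$, after writing $w_{k-r}^-(s)=e^{i\xi_-(\eps,k-r)s}\bigl[e^{-i\xi_-(\eps,k-r)s}w_{k-r}^-(s)\bigr]$ so that the phase $\xi_-(\eps,k-r)$ is absorbed into $E_{i,j}$. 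A single IBP converts $\Delta^{-1}(\eps,k)$ into $\Delta^{-1}(\eps,k)E_{i,j}^{-1}(\eps,k,k-r)$ times boundary terms at $s=0,\infty$ plus a volume integral of $\tfrac{d}{ds}\bigl[e^{-i\xi_-(\eps,k-r)s}w^-_{k-r}\bigr]$; the $s=\infty$ contribution vanishes by Lemma \ref{t6}(f), while the $s=0$ contribution is absorbed into the trace components $w^-_{k-r}(0)/\sqrt{\gamma}$ built into $\CalW_{k-r}$. The derivative in the volume integral is then eliminated by appealing to \eqref{a3a} for $w_{k-r}^-$, which introduces the additional sum over $t\in\ZZ$ weighted by $\at$ and linking to $V_{k-r-t}$ through $\CalW_{k-r-t}$. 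Finally, $|\Delta(\eps,k)E_{i,j}(\eps,k,k-r)|^{-1}$ is bounded by $D(\eps,k,k-r)$ of Definition \ref{t28} via $|\Delta^{-1}|\sim|X_k|/|\tau-c_+(\beta_l)\eta|$ combined with the case (I) lower bounds on $E_{i,j}$ from Proposition \ref{finiter}(1)--(2) (and \eqref{EiN} when $j=N$): the regime $|r|\le\eps^{-\xi}$ draws $|r|^{2+\delta}$ from Proposition \ref{finiter}(2), while the complementary regime yields the $(\eps\gamma)^{-1}|r|$ branch of $D$. The real difficulty lies in the uniform bookkeeping that ensures every boundary and volume contribution fits inside the target expression \eqref{step4} with the amplification prescribed by Definition \ref{t28}.
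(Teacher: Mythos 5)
Your treatment of item (2) on $\mathrm{supp}\,\chi_b^2$ has a real gap. There you claim that the smallness $|X_{k-r}|\lesssim N_1^{-1}|X_k|$ from Lemma \ref{c9a}, ``carried through'' \eqref{a5}--\eqref{a6} via Proposition \ref{g1z} and \eqref{a7}, already yields the bound with amplification $C_5|r|$. It does not: on $\mathrm{supp}\,\chi_b^2$ the frequency $X_k$ still lies in $\Gamma_{\frac{\delta}{N_1|r|}}(\beta_l)$, so $|\Delta(\eps,k)|^{-1}\sim |X_k|/|\tau-c_+(\beta_l)\eta|$ can be as large as $|X_k|/\gamma\sim(\eps\gamma)^{-1}$, and the smallness of $|X_{k-r}|$ says nothing about the size of $w^\pm_{k-r}$ itself. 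The mechanism in \cite{W4} (reproduced in the commented-out source of this paper) is an integration by parts in $s$ against the \emph{matrix} phase $E_i(\eps,k,k-r)=\bigl(\omega_i(X_k)-r\omega_N(\beta_l/\eps)\bigr)I_N-\cA(X_{k-r})$, whose invertibility with $|E_i^{-1}|\lesssim|X_k|^{-1}$ is exactly what the smallness of $|X_{k-r}|$ buys, so that $|\Delta^{-1}(\eps,k)E_i^{-1}|\lesssim\gamma^{-1}$; without that step the $\chi_b^2$ estimate in \eqref{chigb2b3} is not obtained.

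Your item (4) addresses the wrong term. The lemma's item (4) concerns the piece $\frac{C}{\gamma}\bigl|\frac{\chi^1_b\,\ar w^{+}_{k-r}}{\Delta(\eps,k)}\bigr|_{L^2}$, whereas the argument you sketch (scalar $E_{i,j}$ with $j\in\mathcal I$, absorbing $\xi_-(\eps,k-r)$, lower bounds from Proposition \ref{finiter} and \eqref{EiN}) is the treatment of the $w^{-}_{k-r}$ piece --- which is not part of the quoted lemma at all, but is precisely the new Claim proved in the paper's own proof of Proposition \ref{tt30}, and it is the only place where the cutoff $\chi$ and Assumption \ref{omega} enter. For the $w^{+}_{k-r}$ piece the scalar/Diophantine route is unavailable: the relevant difference is $\omega_i(\eps,k)-r\omega_N(\beta_l)/\eps-\omega_{j'}(\eps,k-r)$ with $j'\in\mathcal O$, which falls outside Definition \ref{defneij} and Assumption \ref{omega}. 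The argument of \cite{W4} instead splits $\chi_b^1$ according to $|X_{k-r}|\geq |X_k|/N_3$ or not: in the first case one uses $\bigl|\Delta(\eps,k-r)/\Delta(\eps,k)\bigr|\sim|X_k|/|X_{k-r}|\leq N_3$ to convert $w^{+}_{k-r}/\Delta(\eps,k)$ into the component $\tilde w^{+}_{k-r}=\Delta^{-1}(\eps,k-r)w^{+}_{k-r}$ of $\CalW_{k-r}$ (this is the reason $\CalW$ carries $\tilde w^{+}$ rather than $w^{+}$, a point your sketch never produces); in the second case one integrates by parts against the matrix $F_{+,+}(\eps,k,k-r)=\xi_+(\eps,k)-r\omega_N(\beta_l/\eps)I-\xi_+(\eps,k-r)$, invertible with $|\Delta^{-1}F_{+,+}^{-1}|\lesssim\gamma^{-1}$ since the first two terms dominate. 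Your items (1), (2) on $\chi_g$ and $\chi_b^3$, and (3) are essentially the argument of \cite{W4} (your Duhamel splitting in (1) is a harmless variant of simply appending the explicit $\widehat F_k$ integrals to \eqref{a5}--\eqref{a6} and applying Proposition \ref{g1z}, though note the small inconsistency of prescribing a vanishing trace for $V_k^p$ and then correcting the boundary data by $BV_k^p(0)$), but as written the $\chi_b^2$ and $w^{+}_{k-r}$ replacements are not established.
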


\begin{proof} \label{newamp}  of proposition \ref{tt30}.\\

We observe that lemma \label{wiites}(2) implies that it's sufficient to prove \eqref{t30c} on the support of $\chi_b^1$.  \\

\textbf{Claim}: We can also replace $\frac{C}{\gamma}\left|\frac{\chi^1_b(\eps,k,k-r)\ar w^-_{k-r}}{\Delta(\eps,k)}\right|_{L^2} $ on the right of \eqref{t33} by \eqref{step4}.  \\

Suppose the claim is true, using proposition \ref{g1z} and the formula \eqref{a5} \eqref{a6}, we see that $\frac{1}{\sqrt{\gamma}}|\tilde{w}_k^+(0,\zeta)|_{L_2(\sigma,\eta)},\frac{1}{\sqrt{\gamma}}|w_k^-(0,\zeta)|_{L_2(\sigma,\eta)}$ satisfy, respectively, the same estimates as $\frac{1}{\sqrt{\gamma}}|\tilde{w}_k^+(x_2,\zeta)|_{L_2},\frac{1}{\sqrt{\gamma}}|w_k^-(x_2,\zeta)|_{L_2}$.  Therefore, \eqref{step4} together with \eqref{t33} gives the estimate \eqref{t30c} on the support of $\chi_b^1$, which finishes the proof of proposition \ref{tt30}. It is only left to prove \eqref{step4}.

\textit{Proof of claim: improving the $\left|\frac{\chi^1_b(\eps,k)\ar w^-_{k-r}}{\Delta(\eps,k)}\right|_{L^2}$ pieces of \eqref{t33}.} 

For any fixed $r$, the main part of $\left|\frac{\chi^1_b(\eps,k)\ar w^-_{k-r}}{\Delta(\eps,k)}\right|_{L^2}$ arises from the part of \eqref{a6} given by:
 \begin{align}\label{h14}
 \begin{split}
 &A:=e^{i\xi_-(\eps,k)x_2} [Br_-(\eps,k)]^{-1}Br_+(\eps,k)\int^\infty_0 e^{i\xi_+(\eps,k)(-s)+i\frac{r\omega_N(\beta_l)}{\eps}s} \ar b(\eps,k,k-r)w^-_{k-r}(s,\zeta)]ds.
\end{split}
\end{align}

We ignore factors that are $O(1)$ and independent of $s$,  and the $p-$component of $A$ is a sum of terms of the form
 \begin{align}\label{h15}
A_{p,i,j}:=e^{i\omega_p(\eps,k)x_2}\frac{1}{\Delta(\eps,k)}\int^\infty_0 e^{i\omega_i(\eps,k)(-s)+i\frac{r\omega_N(\beta_l)}{\eps}s} \alpha_r w^-_{k-r,j}(s,\zeta)ds
\end{align}
where $p\in \mathcal{I}$, $i\in\mathcal{O}$ and $w^-_{k-r,j}$, $j\in\mathcal{I}$ denotes a component of $w^-_{k-r}$. We now improve the $\chi_b^1 A_{p,i,j}$ by an integration by part. Setting:
\begin{equation}
w^{*,-}_{k-r,j}(x_2,\zeta):= e^{-i\omega_j(\eps,k-r)x_2}w_{k-r,j}^-
\end{equation}
we may rewrite $\chi_b^1 A_{p,i,j}$ as:
\begin{equation} \label{482}
e^{i\omega_p(\eps,k)x_2}\frac{1}{\Delta(\eps,k)}\int^\infty_0 e^{-iE_{i,j}(\eps,k,k-r)s}\chi_b^1\alpha_r w^{*,-}_{k-r}(x_2,\zeta)ds
\end{equation}
where we recall that $E_{i,j}(\eps,k,k-r):= \omega_i(\eps,k) - \frac{r\omega_N(\beta_l)}{\eps}-\omega_j(\eps,k-r)$. From \eqref{a3a}, we have:
\begin{equation} \label{483}
\D_{x_2}w^{*,-}_{k-r,j} = e^{-i\omega_j(\eps,k-r)x_2}ih_{k-r,j}
\end{equation}
where $h_{k-r,j}$ is the $j$th component of the right hand side of $\eqref{a3a}_{k-r}$. We also have:
\begin{equation}\label{484}
-\frac{1}{i} E_{i,j}^{-1}\frac{d}{ds}e^{-iE_{i,j}s} = e^{-i E_{i,j}s}
\end{equation}
Using \eqref{483} \eqref{484}, an integration by part on \eqref{482} gives:
\begin{align} \label{ibpresult}
e^{i\omega_p x_2}&\frac{1}{i\Delta (\eps,k)}\chi_b^1 E_{i,j}^{-1}(\eps,k,k-r) \left[\alpha_r w_{k-r,j}^-(0,\zeta) + \int_0^\infty e^{-iE_{i,j}(\eps,k,k-r)s} e^{-i\omega_j(\eps,k-r)s}\ar i h_{k-r}(s,\zeta)\right]ds
\end{align}
Now by proposition \ref{finiter}, in particular \eqref{rdependoneps},  on the support of $\chi_b^1$,  for $|r|\leq \eps^{-\xi}$, when $j \neq N$ we have:
\begin{equation} \label{control}
\left| \frac{1}{\Delta(\eps,k)E_{1,j}}\right|\leq \frac{|X_k|}{\gamma}\frac{|r|^{2+\delta}}{C|X_k|} \sim \frac{|r|^{2+\delta}}{\gamma}
\end{equation}
and for $j = N$, using \eqref{EiN},  we have the following three cases:
(I). The first alternative in \eqref{EiN} holds; (II). The second alternative holds and $|X_{k-r}| \geq \frac{|X_k|}{N_2}$; (III). The second alternative holds and $|X_{k-r}| < \frac{|X_k|}{N_2}$  for $N_2 \in \ZZ$ chosen below. \\

We have, in case (I):
\begin{equation} \label{controlN1}
\left| \frac{1}{\Delta(\eps,k)E_{i,N}}\right|\leq \frac{|X_k|}{\gamma}\frac{|r|}{C|X_k|} \sim \frac{|r|}{\gamma},
\end{equation}

in case (II):
\begin{equation} \label{controlN2}
\left| \frac{1}{\Delta(\eps,k)E_{i,N}}\right|\leq \frac{|X_k|}{\gamma}\frac{1}{C|X_{k-r}|} \sim \frac{N_2}{\gamma}
\end{equation}

in case (III): the third term in $E_{i,j}$ is dominated by the first two terms,  more precisely,
\begin{equation}\label{h4'}
|X_{k-r}| = |X_k - \frac{r\beta_l}{\eps}| \leq \frac{1}{N_2}|X_k|,
\end{equation}
so taking $N_2$ large, we have\footnote{Use \eqref{h4'} and \eqref{t6a}(e) for the last $\sim$ of \eqref{h6'}.}:
\begin{equation} \label{h6'}
|\omega_N(X_{k-r})| \lesssim |X_{k-r}| \leq \frac{1}{N_2}|X_k| \sim |\omega_i(X_k) - \omega_N(\frac{r\beta_l}{\eps})|
\end{equation}
This implies that $E_{i,N}$ satisfies 
\begin{equation} \label{controlN3}
|E_{i,N}^{-1}| \leq \frac{C(N_2)}{|X_k|} \text{ and hence } \left|\frac{1}{\Delta(\eps,k)E_{i,N}} \right| \leq \frac{C(N_2)}{\gamma}
\end{equation}
provided $N_2$ is large enough.

Equation \eqref{ibpresult}, \eqref{control},  \eqref{controlN1}, \eqref{controlN2}, \eqref{controlN3}, proposition \ref{g1z}, and definition \eqref{t28} yield the desired inequality on the support of $\chi$:
\begin{align} \label{384}
&|\chi_b^1(\eps,k,k-r) \chi A_{p,i,j}|_{L^2} \lesssim 
\\&\quad \frac{C}{\gamma^{3/2}}\left[|\ar D(\eps,k,k-r)\chi w_{k-r,j}^{-}(0,\zeta)|_{L^2(\sigma,\eta)} + \frac{C}{\sqrt{\gamma}}\sum_{t\in\ZZ \setminus 0}|\ar\alpha_t D(\eps,k,k-r)\chi w_{k-r-t}|_{L^2}\right]
\end{align}

We observe that when $|r|\geq \eps^{-\xi}$,  above inequality holds by the definition of amplification factor.

The piece of the second term on the right of \eqref{t33}(a) given by $\left|\frac{\chi^1_b(\eps,k)\ar w^-_{k-r}}{\Delta(\eps,k)}\right|_{L^2}$ arises from the following part of \eqref{a5}:
\begin{equation}
\frac{\chi_b^1(\eps,k,k-r)}{\Delta(\eps,k)}\int_{x_2}^\infty e^{i\xi_+(\eps,k)(x_2-s)+i\frac{\omega_N(\beta_l)}{\eps}s}\ar b(\eps,k,k-r)w^-_{k-r}(s,\zeta)ds
\end{equation}
A same integration by part argument would give the estimate of the form \eqref{384}.\\

Now as we finished the proof of claim,  the proof of proposition \ref{tt30} is completed.
\end{proof}

The next theorem gives an a priori estimate in the small/medium frequency region for the singular system:
\begin{align}\label{i6s}
\begin{split}
&D_{x_2}U+A_0(D_t+\frac{\sigma_l}{\eps}D_\theta)U+A_1(D_{x_1}+\frac{\eta_l}{\eps}D_\theta)U-iB_2^{-1}\cD\left(\frac{\omega_N(\beta_l)}{\eps}x_2+\theta\right)U= F(t,x,\theta)\\
&BU= G(t,x_1,\theta)\text{ on }x_2=0\\
\end{split}
\end{align}

\begin{thm} \label{mainthm}
Let $U^\gamma(t,x,\theta) \in H^2(\Omega \times \mathbb{T})$ for $\gamma \geq 1$.    Assume that the singular system \eqref{i6s} has $p$ incoming phases $\phi_{N-p},...,\phi_N$.  Let:
\begin{align}\label{i6s'}
\begin{split}
& F(t,x,\theta) := D_{x_2}U+A_0(D_t+\frac{\sigma_l}{\eps}D_\theta)U+A_1(D_{x_1}+\frac{\eta_l}{\eps}D_\theta)U-iB_2^{-1}\cD\left(\frac{\omega_N(\beta_l)}{\eps}x_2+\theta\right)U\\
& G(t,x_1,\theta) := BU(t,x_2,0,\theta)\\
\end{split}
\end{align}
Suppose the coefficients $\widehat{\mathcal{D}}(r)$ satisfy  $\widehat{\mathcal{D}}(0) = 0$ and $\widehat{\mathcal{D}}(r) \lesssim |r|^{-(M+N)}$ for some $M \geq 4$ and $N \geq \xi^{-1}$ where $\xi$ is as in proposition \ref{finiter}. Under the usual structural  assumptions \ref{stricthyperbolicity}, \ref{lopassump}, and \ref{BNp},  we assume further that $\Upsilon_0=\{\beta_l,-\beta_l\}$ and assumption \ref{omega} holds. Then there exist positive constants $\eps_0,\ \gamma_0,\ K$ such that for $0 < \eps \leq \eps_0$ and $\gamma \geq \gamma_0$ we have:
\begin{align} \label{mainestimate}
|\chi_D U^\gamma|_{L^2(t,x,\theta)} + \left|\frac{\chi_D U^\gamma (0)}{\sqrt{\gamma}}\right|_{L^2(t,x_1,\theta)} \leq 
K\left[ \frac{1}{\gamma^2}(\sum_{k\in\ZZ}\left| \chi |X_k|\widehat{F_k}\right|^2_{L^2(x_2,\sigma,\eta)})^{1/2} + \frac{1}{\gamma^{3/2}}(\sum_{k\in\ZZ}\left| \chi|X_k|\widehat{G_k}\right|^2_{L^2(\sigma,\eta)})^{1/2}\right]
\end{align}
\end{thm}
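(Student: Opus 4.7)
The plan is to take the $\ell^2(k)$ norm of the iteration estimate \eqref{t30c} and close the resulting inequality by Young's inequality for convolutions, using the hypothesized Fourier decay $|\alpha_r|\lesssim |r|^{-(M+N)}$ to tame the global amplification factor $\DD$. After the change of variable $m=r+t$, the double sum on the right of \eqref{t30c} becomes a single convolution: writing
\[
\beta_m(\eps,k,\zeta):=\sum_{r\in\ZZ\setminus 0}\alpha_r\,\alpha_{m-r}\,\DD(\eps,k,k-r)(\zeta),
\]
the bound \eqref{t30c} reads $\|\chi V_k\|\leq \tfrac{C}{\gamma}\sum_{m\in\ZZ}\beta_m\|\chi V_{k-m}\|+\tfrac{C}{\gamma^2}|\chi|X_k|\widehat{F_k}|_{L^2}+\tfrac{C}{\gamma^{3/2}}|\chi|X_k|\widehat{G_k}|_{L^2}$. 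Passing to $\ell^2(k)$ and applying Young gives
\[
\bigl\|(\|\chi V_k\|)\bigr\|_{\ell^2(k)}\leq \frac{C}{\gamma}\,\sup_{k,\zeta}\|\beta_{\cdot}(\eps,k,\zeta)\|_{\ell^1(m)}\,\bigl\|(\|\chi V_k\|)\bigr\|_{\ell^2(k)}+\text{source terms},
\]
and by the triangle inequality $\sup_{k,\zeta}\|\beta_{\cdot}\|_{\ell^1(m)}\leq \|\alpha\|_{\ell^1}\cdot\sup_{k,\zeta}\sum_{r\neq 0}|\alpha_r|\DD(\eps,k,k-r)(\zeta)$.

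The central analytic task is therefore to bound $\Sigma(\eps):=\sup_{k,\zeta}\sum_{r\neq 0}|\alpha_r|\DD(\eps,k,k-r)$ uniformly in $\eps\in (0,\eps_0]$. Using Definition \ref{t29}, I would bound $\DD(\eps,k,k-r)$ pointwise by $C_5|r|^{2+\delta}$ unless we are in case $(I)$ with $|r|\geq \eps^{-\xi}$, in which case $\DD=C_5|r|/(\eps\gamma)$. The polynomial piece contributes $\sum_r |\alpha_r||r|^{2+\delta}\lesssim \sum_r |r|^{2+\delta-(M+N)}$, which is finite since $M\geq 4$ forces $M+N>3+\delta$ for $\delta$ small. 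For the tail $|r|\geq \eps^{-\xi}$, note $\eps^{-1}\leq |r|^{1/\xi}$, so
\[
|\alpha_r|\,\frac{|r|}{\eps\gamma}\lesssim \frac{|r|^{\,1+1/\xi-(M+N)}}{\gamma},
\]
and summability follows from the hypothesis $N\geq \xi^{-1}$ together with $M\geq 4$, which gives $M+N-1-1/\xi\geq 3$. Hence $\Sigma(\eps)\leq C_*$ uniformly in $\eps$, with moreover a factor $\gamma^{-1}$ extractable from the ``large $r$'' part (while the polynomial part is $O(1)$ and does not yield extra decay in $\gamma$).

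Choosing $\gamma_0$ large so that $C\|\alpha\|_{\ell^1}C_*/\gamma\leq \tfrac12$ for $\gamma\geq \gamma_0$, the convolution term is absorbed into the left-hand side, leaving
\[
\bigl\|(\|\chi V_k\|_k)\bigr\|_{\ell^2(k)}\leq K\Bigl[\tfrac{1}{\gamma^2}\bigl(\textstyle\sum_k|\chi|X_k|\widehat{F_k}|^2_{L^2}\bigr)^{1/2}+\tfrac{1}{\gamma^{3/2}}\bigl(\textstyle\sum_k|\chi|X_k|\widehat{G_k}|^2_{L^2(\sigma,\eta)}\bigr)^{1/2}\Bigr].
\]
The conclusion \eqref{mainestimate} then follows from \eqref{tu31}, since $\chi V_k=\widehat{(\chi_D U^\gamma)}_k$ by \eqref{chiD} and Plancherel in $(t,x_1,\theta)$ turns the $\ell^2(k)$ square sum into $|\chi_D U^\gamma|_{L^2(t,x,\theta)}+|\chi_D U^\gamma(0)/\sqrt{\gamma}|_{L^2(t,x_1,\theta)}$.

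The main obstacle is the $\eps^{-1}$ in $\DD$: without any restriction on frequencies the amplification factor would be incompatible with a uniform estimate, and even with the small/medium-frequency cutoff $\chi$ the iteration estimate retains an $\eps^{-1}\gamma^{-1}$ blowup whenever $|r|\geq \eps^{-\xi}$. This is exactly the difficulty forcing the fast Fourier decay hypothesis $|\widehat{\mathcal{D}}(r)|\lesssim|r|^{-(M+N)}$ with $N\geq \xi^{-1}$; the bound $\eps^{-1}\leq |r|^{1/\xi}$ on the tail $|r|\geq \eps^{-\xi}$ converts the $\eps^{-1}$ into $|r|^{1/\xi}$, which the decay of $\alpha_r$ can then absorb. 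A secondary technical point is the dependence of $\DD$ on the case distinction \eqref{cases} in $(\zeta,k)$: the pointwise bound $\DD\leq C_5\max(|r|^{2+\delta},|r|/(\eps\gamma))$ already encodes the worst case uniformly in $(\zeta,k)$, so Young's inequality applies cleanly once that pointwise bound is invoked.
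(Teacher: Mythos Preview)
Your proposal is correct and follows essentially the same approach as the paper: bound $|\alpha_r|\DD(\eps,k,k-r)$ pointwise in $r$ by a sequence $\beta_r$ that is $\ell^1$ uniformly in $\eps$ (using $\eps^{-1}\leq |r|^{1/\xi}$ on the tail $|r|\geq \eps^{-\xi}$, equivalently the paper's $|r|^{-N}\leq \eps^{N\xi}$), apply Young's convolution inequality to the double sum in the iteration estimate, and absorb the resulting term via the prefactor $C/\gamma$ by taking $\gamma_0$ large. The only cosmetic difference is that the paper first introduces the upper bound $\beta_r$ and then applies Young twice, whereas you collapse the double sum to a single convolution first and then invoke the triangle inequality to reduce to $\sum_r|\alpha_r|\DD$; the analytic content is identical.
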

\begin{proof}
We note that by equation \eqref{tu31}, the proof is reduced to the proof of the following equation:
\begin{equation} \label{l2vk}
(||\chi V_k||)_{\ell^2(k)} \leq 
K\left[ \frac{1}{\gamma^2}(\sum_{k\in\ZZ}\left| \chi |X_k|\widehat{F_k}\right|^2_{L^2})^{1/2} + \frac{1}{\gamma^{3/2}}(\sum_{k\in\ZZ}\left| \chi|X_k|\widehat{G_k}\right|^2_{L^2(\sigma,\eta)})^{1/2}\right]
\end{equation}

Define \begin{equation}
\beta_r:=
\begin{cases}
\alpha_r C|r|^{2+\delta} \text{ for } r\leq \eps^{-\xi}\\
\frac{\alpha_r C|r|}{\eps} \text{ otherwise } 
\end{cases}
\end{equation}
By definition of $\DD(\eps,k,k-r)(\zeta)$, we see that $\beta_r \geq \alpha_r\DD$. Using the fact that when $|r| \gtrsim \eps^{-\xi}$, the corresponding Fourier coefficients of $\mathcal{D}$, namely $\alpha_r$, satisfy  $|\alpha_r| \leq C_{M,N}<r>^{-(M+N)}\leq C_{M,N}\eps^{N\xi}|r|^{-M}$, we have:
\begin{align}
&|\beta_r| \leq |r|^{-3/2} \text{ for } r\leq \eps^{-\xi}\\
&|\beta_r| \leq C\eps^{N\xi-1}|r|^{-(M-1)} \leq C|r|^{1-M}\text{ otherwise } 
\end{align}
The above inequalities ensure that $|(\beta_r)|_{\ell^1}$ is finite.  From this point we can repeat the proof of Proposition 4.7 in \cite {W4} to obtain the desired estimate.  That is\footnote{We note that the definition of $\mathbb{D}$ and $\beta_r$ are different from \cite{W4}.}:
\begin{align}
\begin{split}
&\sum_{r\in\ZZ\setminus 0}\sum_{t\in\ZZ}\|\alpha_r\alpha_t\DD(\eps,k,k-r)\chi V_{k-r-t}\|\leq \sum_{r\in\ZZ\setminus 0}\sum_{t\in\ZZ}\|\beta_r\alpha_t \chi V_{k-r-t}\|=\\
&\qquad \qquad \sum_s\left(\sum_{r+t=s}|\beta_r\alpha_t|\right)\| \chi V_{k-s}\|:= \sum_s\gamma_s\|\chi V_{k-s}\|.
\end{split}
\end{align}
Young's convolution inequality  gives
\begin{align} 
 \left|\left(\sum_s\gamma_s\|\chi V_{k-s}\|\right)\right|_{\ell^2(k)}\leq |(\|\chi V_k\|)|_{\ell^2}|(\gamma_s)|_{\ell^1}. 
 \end{align}
Since $\gamma_s=\sum_r|\beta_r||\alpha_{s-r}|$,  using Young's convolution inequality again we have
\begin{align}
|(\gamma_s)|_{\ell^1}\leq |(\beta_r)|_{\ell^1}|(\alpha_t)|_{\ell^1}:=K_1.
\end{align}
Thus, by propsition \ref{tt30} with remark \ref{newamp}, we have
\begin{equation}
||\chi V_k||_{\ell^2} \lesssim \frac{K_1}{\gamma} |(\| \chi V_k\|)|_{\ell^2}+|(|\chi |X_k|\widehat{F_k}|_{L^2(x_2,\sigma,\eta)})|_{\ell^2}+|(|\chi |X_k|\widehat{G_k}|_{L^2(\sigma,\eta)})|_{\ell^2}
\end{equation}
and the result follows by taking $\gamma_0$ large enough.
\end{proof}

\subsection{Application of the Main Estimate: an Existence Theorem} \label{ext}\hfill \\

In this section, we informally discuss an application of theorem \ref{mainthm} of getting a ``restricted" existence result for a forward problem different from \eqref{i6}.  The idea is to use a classical duality argument based on an energy estimate for the corresponding ``backward" or adjoint boundary value problem together with the Riesz representation theorem. 

We first explain why the argument fails to yield an existence result for \eqref{i6}.  In \eqref{i6}, we recall that $\omega_N$ is incoming,  which is used to prove the main energy estimate \eqref{mainestimate}. If we want to prove an existence result for \eqref{i6} by duality, we need an estimate for the corresponding backward problem. In that backward problem, $\omega_N$ still appears in the oscillatory term but now it is outgoing, and thus the proof of the energy estimate breaks down for this backward problem.  In the proof of proposition \ref{tt30} (and lemma \ref{wiites}),  we generally rely on a lower bound for terms like $E_{i,j}$  to control the term $\Delta^{-1}(\eps,k)$.  More precisely,  to prove the second equation in \eqref{chigb2b3}  of lemma \ref{wiites}, we need a lower bound for $E_{i}$:
\begin{equation}
E_{i}(\eps,k,k-r)=\left(\omega_i(X_k)-r\omega_N(\frac{\beta_l}{\eps})\right)I_N-\cA(X_{k-r}).
\end{equation}
Now since $\omega_i$ is outgoing,  we may have $i = N$. In this case,  for any fixed $k = r \neq 0$, we have:
\begin{equation}
E_{i}(\eps,k,k-r)=\left(\omega_N(\zeta+r\frac{\beta_l}{\eps})-\omega_N(\frac{r\beta_l}{\eps})\right)I_N-\cA(\zeta).
\end{equation}
which can be small, for example, for $\zeta$ near $0$.  It is impossible to establish an lower bound for $E_{i}$, so we can't prove even a restricted existence result for \eqref{i6}. 

Given above observation, in this section, we are considering the following singular problem that is different from \eqref{i6} for which $\omega_N$ is outgoing. This means that the new corresponding backward problem $\omega_N$ is incoming, so we will have a restricted estimate for this backward problem.  We study the forward singular problem:

\begin{align}\label{u1}
\begin{split}
&D_{x_2}U+A_0(D_t+\frac{\sigma_l}{\eps}D_\theta)U+A_1(D_{x_1}+\frac{\eta_l}{\eps}D_\theta)U-iB_2^{-1}\cD\left(\frac{\omega_N(\beta_l)}{\eps}x_2+\theta\right)U=F(t,x,\theta)\\
&BU=G(t,x_1,\theta)\text{ on }x_2=0\\
&U=0\text{ in }t<0.
\end{split}
\end{align}
where we now assume that the $B_j$ used to define the $A_j$ are now symmetric (recall that  $A_0 = B_2^{-1}$,  $A_1 = B_2^{-1}B_1$). 

Suppose that the $\omega_j$  are incoming for $j\in \{1,...,N-p\}$  and outgoing for $j\in \{N-p+1,...,N\}$.  We denote the operator on the left hand side by  $L := D_{x_2}+A_0(D_t+\frac{\sigma_l}{\eps}D_\theta)+A_1(D_{x_1}+\frac{\eta_l}{\eps}D_\theta)-iB_2^{-1}\cD\left(\frac{\omega_N(\beta_l)}{\eps}x_2+\theta\right)$. 
Commuting \eqref{u1}  with $e^{-\gamma t}$, we are led to find solutions in $L^2(t,x,\theta)$ of the system:
\begin{align} \label{u2}
&L^\gamma \tilde{U} = F^\gamma\\
&B\tilde{U} = G^\gamma
\end{align}
where $L^\gamma := -i\gamma A_0 + L$,  $F^\gamma = e^{-\gamma t} F$, $G^\gamma =e^{-\gamma t}G$. We define the formal adjoint $L^*$ of $L$:
\begin{align}
L^* := &D_{x_2} + A_0^T(D_t + \frac{\sigma_l}{\eps})D_\theta + A_1^T(D_{x_1}+\frac{\eta_l}{\eps})D_\theta + \left( iB_2^{-1}\cD\left(\frac{\omega_N(\beta_l)}{\eps}x_2+\theta\right) \right)^T 
\\ (L^\gamma)^* := &i\gamma A_0^T + L^*
\end{align}
with respect to the $L^2$ inner product: 
\begin{align}
(U,V)_{L^2(\Omega \times \TT)} = \int_{\Omega \times \TT} U(t,x,\theta)\overline{V(t,x,\theta)}
\end{align}
Then, the dual problem (or backward problem) of \eqref{u2} is: 
\begin{align} \label{bwp}
&(L^\gamma)^* V = f_\sharp 
\\&M_\sharp V|_{x_2 = 0} = g_\sharp
\end{align}
for suitable $f_\sharp , g_\sharp$, and $M_\sharp$ as a $(n-p)  \times n$ matrix of maximal rank satisfying:
\begin{equation} \label{msharp}
B^T_\sharp B + M^T_\sharp M = I
\end{equation}
for suitable matrices $B_\sharp$ and $M$.

We observe that the backward problem \eqref{bwp} is of the same form as \eqref{i6s}.  In particular,  in \eqref{bwp} $\omega_j$ is outgoing for $j\in \{1,...,N-p\}$  and is incoming for $j\in \{N-p+1,...,N\}$\footnote{Here we use the assumption that the $B_j$'s are symmetric.}.  We can apply theorem \ref{mainthm} to \eqref{bwp} to get an estimate of the form:
\begin{equation} \label{es0}
\| \chi_D V\|_0 + \frac{1}{\sqrt{\gamma}} \vert \chi_D V (0)\vert _0 \leq 
C \left[ \frac{1}{\gamma^2}\|f_1\|_1 + \frac{1}{\gamma^{3/2}}\vert g_1 \vert_1\right]
\end{equation}
for appropriate $f_1, g_1$. Here $V(0):= V|_{x_2=0}$, $\|V \|_0$ denotes the $L^2(\Omega \times \TT)$ norm,  and $\vert V \vert_s$  denotes the $L^2(\RR^2 \times \TT^3)$ norm, and $\|f\|_1, |g|_1$ are the norms equivalent to the norm appeared on the right hand side of \eqref{mainestimate}.  We can obtain an estimate of the following form by commuting a singular operator with symbol $\langle X_k\rangle^{-1}$  through the problem:
\begin{equation} \label{es-1}
\| \chi_D V\|_{-1} + \frac{1}{\sqrt{\gamma}} \vert \chi_D V(0)\vert _{-1} \leq 
C \left[ \frac{1}{\gamma^2}\|f_2\|_0 + \frac{1}{\gamma^{3/2}}\vert g_2 \vert_0\right]
\end{equation}
for appropriate $f_2, g_2$.  \\

With the help of above estimates, we may apply an argument similar to that  in \cite{Cou05} to get the existence of solutions to \eqref{u1}.  We sketch this argument below.
Define a space of test functions:
\begin{equation}
Q : = \{V \in C_0^\infty(\overline{\Omega \times \TT}) \text{ s.t. } M_\sharp V = 0\}
\end{equation}
We may define a linear functional $\ell$ on $(L^\gamma)^* Q$ by:
\begin{equation} \label{ellop}
\ell[(L^\gamma)^* V] := (F^\gamma,\chi_D V)_{L^2(\Omega \times \TT)} + (G^\gamma,B_\sharp \chi_D V(0))_{L^2(\RR^2 \times \TT)}, \quad \forall V\in Q
\end{equation}
and we have estimate:
\begin{align}
|\ell[(L^\gamma)^* V]| &\leq \|F^\gamma \|_1 \|\chi_D V\|_{-1} + C|G^\gamma|_1 |\chi_D V(0)|_{-1}
\\&\leq C \left(\|\chi_D V\|_{-1} + |\chi_D V(0)|_{-1} \right)
\\&\leq \frac{C}{\gamma^{3/2}} \|(L^\gamma)^* V\|_0
\end{align}
which shows that $\ell$ is continuous. Now by Hahn-Banach theorem, we can extend $\ell$ from the subspace $(L^\gamma)^* Q$ to a continuous linear functional on the whole space of $L^2(\Omega \times \TT)$. Riesz representation theorem gives a function $\tilde{U} \in L^2(\Omega \times \TT)$ such that:
\begin{equation}
\ell[(L^\gamma)^* V] = (U,(L^\gamma)^* V)_{L^2(\Omega \times \TT)}.
\end{equation}
In particular, we have $L^\gamma \tilde{U} = \chi_D F^\gamma$ in the sense of distributions.

As for the boundary,  the trace $ \tilde{U}$ belongs to $H^{-1/2}(\RR^2 \times \TT)$, and the following Green's formula holds:
\begin{equation} \label{greens1}
(\tilde{U},(L^\gamma)^* V)_{L^2(\Omega)} = (\chi_D F^\gamma,V)_{L^2(\Omega)} + (\tilde{U}(0),V(0))_{H^{-1/2}(\RR^2 \times \TT),H^{1/2}(\RR^2 \times \TT)}, \quad \forall V \in C_0^\infty(\overline{\Omega \times \TT}).
\end{equation}
We observe that \eqref{msharp} gives:
\begin{align}\label{bdbsharp}
(\tilde{U}(0),V(0))_{H^{-1/2}(\RR^2 \times \TT),H^{1/2}(\RR^2 \times \TT)} &= (B^T_\sharp B + M^T_\sharp M \tilde{U}(0),V(0))_{H^{-1/2}(\RR^2 \times \TT),H^{1/2}(\RR^2 \times \TT)}
\\&= (B\tilde{U}(0),B_\sharp V(0)) + (M\tilde{U}(0),M_\sharp V(0))
\end{align}
Combining \eqref{ellop}, \eqref{greens1}, and \eqref{bdbsharp} gives:
\begin{equation}
(\chi_D G^\gamma - B\tilde{U}(0), B_\sharp V(0))_{H^{-1/2}(\RR^2 \times \TT^3),H^{1/2}(\RR^2 \times \TT)} = 0
\end{equation}
Using \eqref{msharp}, we observe that:
\begin{equation}
\begin{pmatrix}
B_\sharp \\ M_\sharp
\end{pmatrix} \in W^{2,\infty}(\RR^2 \times \TT)
\end{equation}
is invertible. We therefore conclude that $B\tilde{U}(0)= \chi_D G^\gamma$, which completed showing that $\tilde{U}$ is a solution to \eqref{u1} on $\chi_D$. \\

We conclude that now we have shown the existence of solutions $U \in e^{\gamma t}L^2(\Omega \times \TT_\theta)$ to the system:
\begin{align}
&D_{x_2}U+A_0(D_t+\frac{\sigma_l}{\eps}D_\theta)U+A_1(D_{x_1}+\frac{\eta_l}{\eps}D_\theta)U-iB_2^{-1}\cD\left(\frac{\omega_N(\beta_l)}{\eps}x_2+\theta\right)U=\chi_D F(t,x,\theta)\\
&BU=\chi_D G(t,x_1,\theta)\text{ on }x_2=0\\
\end{align}

\newpage
\section{Geometric Optics Solutions for Weakly Stable Hyperbolic System} \label{newresult}
In this section, we construct an approximate solution to system \eqref{eqn:msystem}. Although we use the tools established in section \ref{tfcas}, there are some new difficulties in the construction due to the failure of the uniform Lopatinski condition at $\beta_l$.\\

We study the following $3 \times 3$ system on $\Omega = (-\infty, \infty) \times \{(x_1,x_2):x_2 \geq 0\}$: 
\begin{equation} \label{eqn:msystem}
	\begin{aligned}
&L(\partial)u + \left(e^{-i\frac{\phi_3}{\epsilon}}+e^{i\frac{\phi_3}{\epsilon}}\right)Mu = 0 \text{ in } x_2>0 \\ 
&Bu = \epsilon G \left( t,x_1,\frac{\phi_0}{\epsilon} \right) \text{ on }x_2 = 0 \\ 
&u = 0 \text{ in } t<0 
	\end{aligned}
\end{equation}
where $L(\D)u := \partial_tu + B_1\partial_{x_1}u + B_2\partial_{x_2}u$, and $G$ is a periodic function in $\theta_0$.
Here all $B_j$'s are $3 \times 3$ matrices and $B_2$ is invertible. We take $\phi_1$ to be the outgoing phase, $\phi_2$, $\phi_3$ to be the incoming phase, and there is no resonance. The boundary phase is taken to be $\phi_0(t,x_1) = \beta_l \cdot (t,x_1)$, where $\beta_l =(\sigma_l, \eta_l) \in \RR^2 \setminus {0}$  is the only direction where the uniform Lopatinski condition fails. Also,  
\begin{equation}
	\begin{aligned}
	&\phi_m (t,x_1,x_2) = \beta_l \cdot (t, x_1) + \omega_m(\beta_l)x_2 \\
	&d\phi_m = (\beta_l, \omega_m)
	\end{aligned}
\end{equation}

\begin{thm}\label{apsothm}
Consider the system \eqref{eqn:msystem} where we assume $Mr_3 = 0$.\footnote{The assumption $Mr_3=0$ is used to decouple some of the profile equations below.} If the small divisor assumption \ref{sdassump} is satisfied and  there is no resonance, there exists an approximate solution  $u^\epsilon_a(t,x) = \sum_{k={1}}^J \epsilon^kU_k(t,x,\frac{\Phi}{\epsilon})$ where $U_k \in \mathcal{H}^\infty(t,x,\theta)$, periodic in $\theta$.
\begin{equation}
\begin{split}
D_{x_2}U^\eps_a+&A_0(D_t+\frac{\sigma_l}{\eps}D_{\theta_0})U^\eps_a+A_1(D_{x_1}+\frac{\eta_l}{\eps}D_{\theta_0})U^\eps_a 
\\-
i&\left(e^{i\left(\frac{\omega_3(\beta_l)}{\eps}x_2+\theta_0\right)}+ e^{-i\left(\frac{\omega_3(\beta_l)}{\eps}x_2+\theta_0\right)}\right)B_2^{-1}MU^\eps_a 
\\ &=\eps^J\left(\mathcal{L}(\D_{\theta})U_J+L(\D)U_{J-1}+\left(e^{i\left(\frac{\omega_3(\beta_l)}{\eps}x_2+\theta_0\right)}+ e^{-i\left(\frac{\omega_3(\beta_l)}{\eps}x_2+\theta_0\right)}\right)MU_{J-1}\right)
 \\
BU^\eps_a &=G(t,x_1,\theta_0)\text{ on }x_2=0
\\U^\eps_a &= 0 \text{ for }t < 0
\end{split}
\end{equation}
\end{thm}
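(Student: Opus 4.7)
The plan is to construct the profiles $U_k$ inductively in $k$, closely following the template of Section \ref{section1} but accommodating two essential new features: the oscillatory coefficient now carries both Fourier modes $e^{\pm i\theta_3}$, and the uniform Lopatinski condition fails at $\beta_l$, so that $B:\EE^s(\beta_l)=\mathrm{span}\{r_2,r_3\}\to\CC^2$ is no longer an isomorphism. First I would plug $u_a^\eps=\sum_{k=1}^J \eps^k U_k(t,x,\Phi/\eps)$ into the singular version of \eqref{eqn:msystem} and equate coefficients of like powers of $\eps$, producing the cascade
\begin{align}
\mathcal{L}(\D_\theta)U_1=0,\quad \mathcal{L}(\D_\theta)U_{j+1}+L(\D)U_j+(e^{i\theta_3}+e^{-i\theta_3})MU_j=0 \text{ for } j\geq 1,
\end{align}
together with boundary conditions $BU_1|_{x_2=0,\,\theta_m=\theta_0}=G$ and $BU_j|_{x_2=0,\,\theta_m=\theta_0}=0$ for $j\geq 2$.

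Applying the operators $R$, $E_P$, $E_Q$ from Section \ref{tfcas}, the first cascade equation forces $(I-E_P)U_1=0$, and for $j\geq 1$ the identities \eqref{eqn:ROp} give $(I-E_P)U_{j+1}=-R[L(\D)U_j+(e^{i\theta_3}+e^{-i\theta_3})MU_j]$, which is known from earlier stages; Assumption \ref{sdassump} and the absence of resonance ensure that $R$ remains well defined on the $\theta$-modes repeatedly excited by $(e^{i\theta_3}+e^{-i\theta_3})M$. The remaining unknowns at level $k$ are the mean $\underline{U_k}$ and three scalar amplitudes $\sigma_k^m(t,x,\theta_m)$ with $U_k=\underline{U_k}+\sum_m\sigma_k^m r_m+(I-E_P)U_k$. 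Applying $E_{Q_m}$ to the cascade and invoking Proposition \ref{propX} converts these into scalar transport equations along $X_{\phi_m}$, while $E_{Q_0}$ gives the evolution equation for $\underline{U_k}$. The outgoing equation for $\sigma_k^1$ is homogeneous in the interior, so the causality $\sigma_k^1\equiv 0$ for $t<0$ forces $\sigma_k^1\equiv 0$ throughout. The $\phi_2$-transport equation for $\sigma_k^2$ is an inhomogeneous transport problem requiring only a boundary trace, and the $\phi_3$-transport equation for $\sigma_k^3$ couples to $\underline{U_k}$ via terms $Q_3(e^{\pm i\theta_3})Mr_3$ and $Q_3(e^{\pm i\theta_3})M\underline{U_k}$ that are both killed by the hypothesis $Mr_3=0$, decoupling $\sigma_k^3$ from $\underline{U_k}$ exactly as in Section \ref{section1}.

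The only genuinely new step, and the main obstacle, is solving the boundary equation. From $BU_k|_{x_2=0,\,\theta_m=\theta_0}=H_k^*$, where $H_1^*$ comes from $G$ and $H_j^*$ for $j\geq 2$ is computed from $-B[(I-E_P)U_j]^*$, one must determine the two incoming boundary traces $\sigma_k^2|_{bry}$ and $\sigma_k^3|_{bry}$. In Section \ref{section1} this was immediate from the ULC, but now $\Delta(\beta_l)=\det Br_-(\beta_l)=0$, so there exists a nonzero $e=c_2 r_2+c_3 r_3\in \ker B\cap \EE^s(\beta_l)$ and $B$ on $\mathrm{span}\{r_2,r_3\}$ has only rank one. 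The plan is to follow the approach of \cite{W4} as set up in Section \ref{abae}: one combination of $(\sigma_k^2,\sigma_k^3)|_{bry}$ is determined by inverting $B$ on a complementary line to $\ker B\cap\EE^s(\beta_l)$, while the remaining free coefficient along $e$ is fixed by exploiting $\D_\tau\Delta(\beta_l)\neq 0$ from Assumption \ref{lopassump}(3) to extract an amplitude equation from the \emph{next}-order boundary equation through pairing with a fixed left annihilator of $Br_-(\beta_l)$. This amplitude equation is a first-order tangential transport equation on the boundary, solved uniquely under the causality condition. The hard part will be verifying that this cascade of amplitude equations closes consistently at every order $k$ and that the boundary data it produces for $\sigma_k^3$ is compatible with the interior $\phi_3$-transport equation; once this closure is established, induction on $k$ up to the prescribed order $J$ produces the desired approximate solution with an $\eps^J$-small residual, as stated.
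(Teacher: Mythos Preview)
Your proposal is correct and follows essentially the same route as the paper: derive the profile cascade, use $E_P$, $E_Q$, $R$ to separate the $(I-E_P)$ parts from the scalar transport equations, kill $\sigma_k^1$ by outgoingness, and handle the degenerate boundary by splitting $E_{P_{in}}U_k|_{x_2=0}=a_k e+\check a_k\check e$, determining $\check a_k$ from $B\check U_k=\ldots$ and fixing $a_k$ from the next-order solvability condition $b\cdot[\ldots]=0$.

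Two small points where your description should be sharpened to match what actually happens in the computation. First, the decoupling: $Mr_3=0$ does \emph{not} kill the term $Q_3 f(\theta_3)M\underline{U_k}$ in the $\sigma_k^3$ transport equation; rather it kills the term $\underline{f(\theta_3)M\sigma_k^3 r_3}$ in the \emph{mean} equation for $\underline{U_k}$, so that $\underline{U_k}$ can be solved first and then fed as known data into the $\sigma_k^3$ equation. Second, the amplitude equation for $a_k$ is not a pure tangential transport equation: the paper's Section \ref{abae} shows it takes the form
\[
X_{Lop}\,a_k(t,x_1,\theta_0)+\bigl(e^{i\theta_0}m_1(D_{\theta_0})+e^{-i\theta_0}m_2(D_{\theta_0})\bigr)a_k=h_k,
\]
where $m_1,m_2$ are bounded Fourier multipliers arising from $R$ acting on the noncharacteristic modes $e^{ik\theta_2\pm i\theta_3}$ (this is where Lemma \ref{Rlem} and the no-resonance hypothesis enter). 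The principal part is still $X_{Lop}$ with $c_0\neq 0$, so unique solvability in $H^\infty$ under causality goes through by an energy argument, but you should expect these extra zero-order terms when you carry out the calculation.
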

As proof of the theorem, we construct approximate solutions.

\subsection{Profile Equations} \hfill \\

We use the same set of tools defined in section \ref{tfcas}, and apply similar method as before.  However, since $B$ fails to be an isomorphism between $\EE^s(\zeta)$ and $\CC^p$, a new discussion on boundary equation is needed, which is discussed in section \ref{abae}.  First we construct approximate solutions of the form
\begin{equation}
u_a^\epsilon (t,x) = \sum_{k=0}^J \epsilon^kU_k\left(t,x,\frac{\Phi}{\epsilon}\right)
\end{equation}
Here all $U_k \in \mathcal{H}^\infty$. Let $f(\theta_3) := e^{-i\theta_3}+e^{i\theta_3}$.  Using the same notation as before, we plug in the approximate solution to our system of equations and get:
\begin{equation} \label{eqn:interior}
	\begin{aligned}
	&\mathcal{L}(\D_\theta)U_{0} = 0 \\
	&\mathcal{L}(\D_\theta)U_1 + L(\D)U_{0} + f(\theta_3)MU_{0} = 0 \\
	&\mathcal{L}(\D_\theta)U_2 + L(\D)U_{1} + f(\theta_3)MU_{1} = 0 \\
	&\mathcal{L}(\D_\theta)U_j + L(\D)U_{j-1} + f(\theta_3)MU_{j-1} = 0 \text{ for } j \geq 1\\
	\end{aligned}
\end{equation}
and the boundary equations at $x_2 = 0, \theta_1 = \theta_2 = \theta_3 = \theta_0$:

\begin{equation}
	\begin{aligned} \label{eqn:bdy1}
	&BU_{0} = 0 \\
	&BU_{1} = G(t,x_1,\theta_0) \\
	&BU_j = 0 \text{ for } j\geq 2
	\end{aligned}
\end{equation}
Applying $E_Q$ to \eqref{eqn:interior}, we have:
\begin{equation}\label{eqn:eqeqn}
	\begin{aligned}
	&E_Q[L(\D)U_{0}+ f(\theta_3)MU_{0}] = 0\\
	&E_Q[L(\D)U_{1}+ f(\theta_3)MU_{1}] = 0\\
	&E_Q[L(\D)U_{j}+ f(\theta_3)MU_{j}] = 0 \text{ for } j \geq 0\\
	\end{aligned}
\end{equation}
And applying $R$ to \eqref{eqn:interior}, by \eqref{eqn:ROp} we have:
\begin{equation} \label{eqn:epeqn}
	\begin{aligned}
	&(I-E_P)U_{0} = 0\\
	&(I-E_P)U_{1} = -R[L(\D)U_{0}+f(\theta_3)MU_{0}]\\
	&(I-E_P)U_{2} = -R[L(\D)U_{1}+f(\theta_3)MU_{1}]\\
	&(I-E_P)U_{j} = -R[L(\D)U_{j-1}+f(\theta_3)MU_{j-1}]\text{ for } j \geq 1\\
	\end{aligned}
\end{equation}
We now proceed to define  $e,\check{e},\check{E}(\beta_l)$ as in  \cite{W4}.  By proposition \ref{esrj}, the vector space $E^s(\beta_l)$ is spanned by $\{r_2,r_3\}$. $B$ fails to be an isomorphism as in assumption \ref{lopassump} implies that the subspace $\ker B \cap E^s(\beta_l)$ is one-dimensional and is thus we can define $e = e_2 + e_3$ such that $\ker B \cap E^s(\beta_l) = \text{span} \{e\}$ for $e_m \in $ span$\{r_m\}$.  We know that the vector space $BE^s(\beta_l)$ is one-dimensional and of real type,  so we have:
\begin{equation} \label{eqn:bdot1}
BE^s(\beta_l) = \{X \in \CC^2: b \cdot X = 0\}
\end{equation}
for an appropriate nonzero vector $b$.  We next choose $\check{E}(\beta_l)$,  such that $E^s(\beta_l) = \check{E}(\beta_l) \oplus \text{span} \{e\}$.  We pick $\check{e}$ such that $\check{E}(\beta_l) = \text{span} \{\check{e}\}$, where $\check{e} = \check{e}_2 + \check{e}_3$, $\check{e}_m \in \text{span} \;r_m$. Thus, by definition we have an isomorphism:
\begin{equation}
B:\check{E}^s(\beta_l) \rightarrow BE^s(\beta_l)
\end{equation}
To analyze an equation of the form $BU_k = H(t,x,\theta_0)$, using notation \ref{eqn:Unotn}, we take the mean on both sides: $B\underline{U_k} = \underline{H}$, and the mean zero oscillation: 
\begin{equation}
(BU_k)^*=H^* \Rightarrow  (B{E_P}{U_k} + B(I-E_P)U_k)^*= H^* \Rightarrow B{E_{P_{in}}}{U_k} = H^* - E_{P_1}U_k - B[I-E_P)U_k]^*.
\end{equation} Since $B{E_{P_{in}}}U_k = B\check{U}_k$, we have:
\begin{equation}
\begin{aligned}
&B\underline{U_k} = \underline{H}\\
&B\check{U}_k = H^* - BEP_1U_k - B[(I-E_P)U_k]^*.
\end{aligned}
\end{equation}
Using this along with \eqref{eqn:bdy1} and \eqref{eqn:epeqn} we have:
\begin{equation} \label{eqn:bumean}
\begin{aligned}
&B\underline{U_0} = 0 \quad
B\underline{U_1} = \uG\\
&B \underline{U_2} = 0 \quad
B \underline{U_j}= 0 \text{ for } j\geq 2
\end{aligned}
\end{equation}
\begin{equation} \label{eqn:bucheck}
\begin{aligned}
B\ccU_0 &= -BE_{P_1}U_0 \\
B\ccU_1 &= G^* - BE_{P_1}U_1 + BR[L(\D)U_0 + f(\theta_3)MU_0]^*\\
B\ccU_2 &= - BE_{P_1}U_2 + BR[L(\D)U_1 + f(\theta_3)MU_1]^*\\
B\ccU_j &= - BE_{P_1}U_j + BR[L(\D)U_{j-1} + f(\theta_3)MU_{j-1}]^* \text{ for } j\geq 2.
\end{aligned}
\end{equation}
Using \eqref{eqn:bdot1}, we have:
\begin{equation} \label{eqn:bdot}
\begin{aligned}
& b \cdot [G^* - BE_{P_1}U_1 + BR(L(\D)U_0 + f(\theta_3)MU_0)^*] = 0\\
& b \cdot [- BE_{P_1}U_2 + BR(L(\D)U_1 + f(\theta_3)MU_1)^*] = 0\\
& b \cdot [- BE_{P_1}U_j + BR(L(\D)U_{j-1} + f(\theta_3)MU_{j-1})^*] = 0 \text{ for } j\geq 2
\end{aligned}
\end{equation}
\subsection{Determining Solutions}\hfill\\

We write as before:
\begin{equation}
U_k = \underline{U_k} + \sigma_k^1r_1 + \sigma_k^2r_2 + \sigma_k^3r_3 + (I-E_P)U_k.
\end{equation}

1. Determining the leading profile $U_0$\\

From \eqref{eqn:epeqn}, we have:
\begin{equation}
U_0 = \underline{U_0} + \sigma_0^1(t,x,\theta_1)r_1+\sigma_0^2(t,x,\theta_2)r_2 + \sigma_0^3(t,x,\theta_3)r_3
\end{equation}
where $\sigma_0^m(t,x,\theta_m)$ are scalars.\\
The first equation \eqref{eqn:eqeqn} gives rise to four equations. We first consider the mean of both sides:
\begin{equation}
\begin{aligned}
&E_{Q_0}[L(\D)U_0+f(\theta_3)MU_0] = 0 \\
\Leftrightarrow &L(\D)\underline{U_0}+\underline{f(\theta_3)M\sigma_0^3(t,x,\theta_3)r_3} = 0.
\end{aligned}
\end{equation}
Using the boundary condition given by \eqref{eqn:bumean} and the fact $Mr_3 = 0$, we have:
\begin{equation}
\begin{aligned}
&L(\D)\underline{U_0} = 0\\
&B\underline{U_0} = 0.
\end{aligned}
\end{equation}
This gives $\underline{U_0} = 0$.\\

We then take the pure $\theta_1$ mode on both sides of the first equation of \eqref{eqn:eqeqn}:
\begin{equation}
\begin{aligned}
E_{Q_1}[L(\D)U_0+f(\theta_3)MU_0] = 0. 
\end{aligned}
\end{equation}
Notice that the only oscillation of $L(\D)U_0+f(\theta_3)MU_0$ in $\theta_1$ is $\sigma_0^1$ since there is no resonance, thus we have:
\begin{equation}
E_{Q_1}L(\D)E_{P_1}U_0 = 0.
\end{equation}
Using proposition \ref{propX}, we have $X_{\phi_1}\sigma_0^1B_2r_1 = 0$. Since $\phi_1$ is outgoing, and $\sigma_0^1 = 0$ when $t < 0$, we have $\sigma_0^1 = 0$.\\

Now we take a moment to discuss implications of $\sigma_0^1 = 0$. We claim that $\sigma_0^1 = 0$ implies $E_{P_1}U_j = 0$ for all $j$. First, we consider $U_1$: 
\begin{equation}
U_1 = \underline{U_1} + \sigma_1^1r_1 + \sigma_1^2r_2 + \sigma_1^3r_3 + (I-E_P)U_1
\end{equation}
where $(I-E_P)U_1 = -R[L(\D)U_0+f(\theta_3)MU_0]$. Plugging this equation in \eqref{eqn:eqeqn}, we have:
\begin{equation} \label{eqn:equ1f}
\begin{aligned}
0 &= E_Q[L(\D)U_1+f(\theta_3)MU_1] = E_Q[L(\D)(E_PU_1 + (I-E_P)U_1) + f(\theta_3)M(E_PU_1+ (I-E_P)U_1)] \\
&\Leftrightarrow 
E_Q[L(\D)(E_PU_1)+ f(\theta_3)ME_PU_1] = -E_Q[L(\D)(I-E_P)U_1+f(\theta_3)M(I-E_P)U_1].
\end{aligned}
\end{equation}
We take the pure $\theta_1$ oscillation of \eqref{eqn:equ1f}. Since there is no resonance, all the terms involving $f(\theta_3)$ is zero and on the right hand side only the terms involving $U_0$ is left. We also know $U_0$ has no $\theta_1$ mode, so we obtain:
\begin{equation} \label{u1theta1}
E_{Q_1}L(\D)E_{P_1}U_{1} = X_{\phi_1}\sigma_1^1B_2r_1 = 0.
\end{equation}
Therefore, $X_{\phi_1}$ is outgoing and $\sigma_1^1 = 0$ in $t<0$ $\Rightarrow \sigma_1^1 = 0$. 

We do the above calculation for other profiles, same calculation shows that $U_{j}$ has no $\theta_1$ mode if $U_{j-1}$ has no $\theta_1$ mode. Therefore, $E_{P_1}U_j = 0$ for all $j$. We will use this observation to continue our determining the solutions.\\

We continue to solve the rest of $U_0$. The other two equations given by \eqref{eqn:eqeqn} is :
\begin{equation}
\begin{aligned}
E_{Q_2}[L(\D)U_0+f(\theta_3)MU_0] = 0 \\
E_{Q_3}[L(\D)U_0+f(\theta_3)MU_0] = 0. \\
\end{aligned}
\end{equation}
Using proposition \ref{propX} and no resonance, we have:
\begin{equation} \label{eqn: u0sigmaint}
\begin{aligned}
&X_{\phi_2}\sigma_0^2B_2r_2 = 0 \\
&X_{\phi_3}\sigma_0^3B_2r_3 + Q^3[f(\theta_3)M\underline{U_0} + f(\theta_3)M\sigma_0^3r_3]= 0. \\
\end{aligned}
\end{equation}
Knowing $\underline{U_0} = 0$ and $Mr_3 = 0$, the second equation become $X_{\phi_3}\sigma_0^3B_2r_3 = 0$. To solve these transport equations, we need a boundary condition for $\sigma_0^2, \sigma_0^3$. To summarize, we now have $U_0 = \sigma_0^2r_2 + \sigma_0^3r_3$.

On the boundary, we write:
\begin{equation}
U_0 = \sigma_0^2(t,x_1,0,\theta_0)r_2 + \sigma_0^3(t,x_1,0,\theta_0)r_3 = a_0(t,x_1,\theta_0)e +  \check{a}_0(t,x_1,\theta_0)\check{e} 
\end{equation}
where $e = e_2 + e_3$,and $\check{e} = \check{e}_2 + \check{e}_3$ for $e_m, \check{e}_m \in $span $r_m$. From \eqref{eqn:bucheck}, we have: $B\ccU = 0$, thus $\check{a}_0 = 0$. 

 It is left to to determine $a_0$. We apply $\D_{\theta_0}$ to both sides of the first equation in \eqref{eqn:bdot}. We already know that $EP_1U_1 = 0$. We have\footnote{Whenever the boundary equation involves the operator $R$ like this, the evaluation at boundary  is taken after applying $R$.}:
\begin{equation} \label{eqn:u0bry}
\begin{aligned}
\D_{\theta_0}b\cdot G^* + \D_{\theta_0}b\cdot BR[L(\D)U_0
+f(\theta_3)MU_0] &=0 \\
\Leftrightarrow \ \D_{\theta_0}b\cdot BR[L(\D)(a_0(t,x_1,\theta_2)e_2 + a_0(t,x_1,\theta_3)e_3))\\ \hspace{1in} +f(\theta_3)M(a_0(t,x_1,\theta_2)e_2 + a_0(t,x_1,\theta_3)e_3))] &= \D_{\theta_0}b\cdot G^*.
\end{aligned}
\end{equation}
Knowing the right hand side, we can solve this equation by the discussion in section \ref{abae}, which gives the unique solution $a_0(t,x_1,\theta_0)$. Now we know $\sigma_0^2(t,x_1,0,\theta_0),$ and $ \sigma_0^3(t,x_1,0,\theta_0)$. Putting these boundary conditions with interior equations \eqref{eqn: u0sigmaint}, we have 
\begin{equation}
\begin{aligned}
\begin{cases}
&X_{\phi_2}\sigma_0^2B_2r_2 = 0 \\
&\sigma_0^2(t,x_1,0,\theta_0) = a_0(t,x_1,\theta_0)e_2+\check{a}_0(t,x_1,\theta_0)\check{e}_2 = a_0(t,x_1,\theta_0)e_2
\end{cases}
\\
\begin{cases}
&X_{\phi_3}\sigma_0^3B_2r_3 = 0 \\
&\sigma_0^3(t,x_1,0,\theta_0) = a_0(t,x_1,\theta_0)e_3+\check{a}_0(t,x_1,\theta_0)\check{e}_3 = a_0(t,x_1,\theta_0)e_3.
\end{cases}
\end{aligned}
\end{equation}
These equations uniquely determined $\sigma_0^2, \sigma_0^3$, thus we now have $U_0$. \\

2. Determining $U_1$:\\

We write 
\begin{equation}
U_1 = \underline{U_1} + \sigma_1^1r_1 + \sigma_1^2r_2 + \sigma_1^3r_3 + (I-E_P)U_1
\end{equation}
where $(I-E_P)U_1 = -R[L(\D)U_0+f(\theta_3)MU_0]$.

We plug this equation in \eqref{eqn:eqeqn}, we have:
\begin{equation} \label{eqn:equ1}
\begin{aligned}
0 &= E_Q[L(\D)U_1+f(\theta_3)MU_1] = E_Q[L(\D)(E_PU_1 + (I-E_P)U_1) + f(\theta_3)M(E_PU_1+ (I-E_P)U_1)] \\
&\Leftrightarrow 
E_Q[L(\D)(E_PU_1)+ f(\theta_3)ME_PU_1] = -E_Q[L(\D)(I-E_P)U_1+f(\theta_3)M(I-E_P)U_1].
\end{aligned}
\end{equation}
Taking the mean of this equation together with the boundary condition gives $U_1$: 
\begin{equation}
\begin{aligned}
&L(\D)\underline{U_1} = 0 \\
&B\underline{U_1} = \underline{G}.
\end{aligned}
\end{equation}
This gives $\underline{U_1}$. 

Taking the $\theta_2, \theta_3$ mode of \eqref{eqn:equ1} yields equations:
\begin{equation}
	\begin{aligned}
	&E_{Q_2}L(\D)(E_{P_2}U_1) = -E_{Q_2}[L(\D)(I-E_P)U_1+f(\theta_3)M(I-E_P)U_1] \\
	&E_{Q_3}[L(\D)(E_{P_3}U_1)+ f(\theta_3)M(\underline{U_1} + \sigma_3^3r_3)] = -E_{Q_3}[L(\D)(I-E_P)U_1+f(\theta_3)M(I-E_P)U_1].
	\end{aligned}
\end{equation}
Using $Mr_3 = 0$ and moving the term involving $\underline{U_1}$ to the right hand side, we have:
\begin{equation} \label{eqn:u123transport}
	\begin{aligned} 
	&X_{\phi_2}\sigma_1^2B_2r_2 = -E_{Q_2}[L(\D)(I-E_P)U_1+f(\theta_3)M(I-E_P)U_1] \\
	&X_{\phi_3}\sigma_1^3B_2r_3 = -E_{Q_3}[L(\D)(I-E_P)U_1+f(\theta_3)M(I-E_P)U_1 - f(\theta_3)M\underline{U_1}].
	\end{aligned}
\end{equation}
These are inhomogeneous transport equations. Knowing $\sigma_1^2, \sigma_1^3 = 0$ in $t < 0$, we only need a boundary condition. We have on the boundary:
\begin{equation}
E_{P_{in}}U_1 = \sigma_1^2(t,x_1,0,\theta_0)r_2 + \sigma_1^3(t,x_1,0,\theta_0)r_3 = a_1(t,x_1,\theta_0)e +  \check{a}_1(t,x_1,\theta_0)\check{e}.
\end{equation}
The equation from \eqref{eqn:bucheck} $B\ccU_1 = G^* - BE_{P_1}U_1 + BR[L(\D)U_0 + f(\theta_3)MU_0]^*$ yields $\ccU_1$ since the right hand side is known. It is left to determine $a_1$. We apply $\D_{\theta_0}$ to both sides of \eqref{eqn:bdot}, we have:
\begin{equation}
\begin{aligned}
\D_{\theta_0} b \cdot [- BE_{P_1}U_2 + BR(L(\D)U_1 + f(\theta_3)MU_1)^*] &= 0\\
\Leftrightarrow
\D_{\theta_0} b \cdot [ BR(L(\D)(a_1(t,x_1,\theta_2)e_2 + a_0(t,x_1,\theta_3)e_3) + 
\\ \hspace{1in}
f(\theta_3)M(a_1(t,x_1,\theta_2)e_2 + a_0(t,x_1,\theta_3)e_3))^*] &= h_1
\end{aligned}
\end{equation}
where the second line is obtained by using $BE_{P_1}U_2 = 0$, moving $\ccU_1, \underline{U_1}, (I-E_P)U_1$ terms to the right hand side, and letting $h_1$ denotes the known right hand side. This equation yields $a_1$ by the discussion in section \ref{abae}. Thus we have now determined $\sigma_1^2, \sigma_1^3$ from the following equations:
\begin{equation}
\begin{aligned}
\begin{cases}
&X_{\phi_2}\sigma_1^2B_2r_2 = H_1^2 \\
&\sigma_1^2(t,x_1,0,\theta_0) = a_1(t,x_1,\theta_0)e_2+\check{a}_1(t,x_1,\theta_0)\check{e}_2
\end{cases}
\\
\begin{cases}
&X_{\phi_3}\sigma_1^3B_2r_3 = H_1^3 \\
&\sigma_1^3(t,x_1,0,\theta_0) = a_1(t,x_1,\theta_0)e_3+\check{a}_1(t,x_1,\theta_0)\check{e}_3
\end{cases}
\end{aligned}
\end{equation}
where $H_1^m$, $m =2,3$ denotes the known right hand side from \eqref{eqn:u123transport}. Now we have determined $U_1$. The pattern to determine the rest of the $U_k$ is now clear.

\subsection{Analysis of the boundary equations} \label{abae}
\hfill \\

\textbf{The Leading Boundary Equation}\\

This section is a similar calculation as section 5.5 \cite{W4} which gives solutions to equations \eqref{eqn:u0bry}. \\
We first analyze the boundary equation \eqref{eqn:u0bry} of leading profile $U_0$:
\begin{equation} \label{u01}
\D_{\theta_0}b\cdot BR[L(\D)(a_0(t,x_1,\theta_2)e_2 + a_0(t,x_1,\theta_3)e_3)+f(\theta_3)M(a_0(t,x_1,\theta_2)e_2 + a_0(t,x_1,\theta_3)e_3)] = \D_{\theta_0}b\cdot G^* 
\end{equation}
Our goal is to show that \eqref{u01} takes the form:
\begin{equation}
X_{Lop}a(t,x_1,\theta_0) + (e^{i\theta_0}m_1(D_{\theta_0})+ e^{-i\theta_0}m_2(D_{\theta_0}))a(t,x_1,\theta_0) = h(t,x_1,\theta_0), \ h=0 \text{ in } t < 0, 
\end{equation}
where $m_i(D_{\theta_0})$ is a bounded Fourier multiplier and $X_{Lop}$ is \footnote{$X_{Lop}$ is in fact a characteristic vector field of the Lopatinski determinant \cite{CG10}.}
\begin{equation}
X_{Lop} = c_0\D_t + c_1\D_{x_1} \text{ , with } c_j \in \RR, c_0 \neq 0
\end{equation}
for some $c_0, c_1$. We start by consider the first term in the left hand side of \eqref{u01}. Using $R_mB_2P_m = 0$ for $m=2,3$, we notice that the $B_2$ term in $L(\D)$ vanished after applying $R$. Thus, we have:
\begin{equation}
\D_{\theta_0}b\cdot BRL(\D)(a_0(t,x_1,\theta_2)e_2 + a_0(t,x_1,\theta_3)e_3) = \D_{\theta_0}b\cdot BRL'(\D)(a_0(t,x_1,\theta_2)e_2 + a_0(t,x_1,\theta_3)e_3)
\end{equation}
for $L'(\D) = \D_t + B_1\D_{x_1}$.\\
It is shown in \cite{CG10} that 
\begin{equation}
\D_{\theta_0}b\cdot BRL'(\D)(a(t,x,\theta_2)e_2+a(t,x,\theta_3)e_3) = X_{Lop}a(t,x,\theta_0).
\end{equation}
Thus, \eqref {u01} becomes:
\begin{equation}
X_{Lop}a(t,x,\theta_0) + \D_{\theta_0}b\cdot BR(e^{i\theta_3}+ e^{-i\theta_3})M(a_0(t,x_1,\theta_2)e_2 + a_0(t,x_1,\theta_3)e_3) = \D_{\theta_0}b\cdot G^* .
\end{equation}
Using $Me_3 = 0$, we have $\D_{\theta_0}b\cdot BR(e^{i\theta_3}+ e^{-i\theta_3})Ma_0(t,x,\theta_3)e_3 = 0$.\\
We write $a(t,x_1,\theta_0) = \sum a_{0,k}(t,x_1)e^{ik\theta_0}$ where the sum is taken over all integers, the second term of \eqref{u01}:
\begin{equation} \label{eqn:uo2}
\begin{aligned}
&\D_{\theta_0}b\cdot BR(e^{i\theta_3}+ e^{-i\theta_3})Ma_0(t,x,\theta_2)e_2 = 
\D_{\theta_0}b\cdot BRMe_2 (\sum a_{0,k}(t,x_1)(e^{ik\theta_2+i\theta_3}+e^{ik\theta_2-i\theta_3})\\
&= \D_{\theta_0} [b\cdot B \sum_{k \in \ZZ} (L(ikd\phi_2+id\phi_3)^{-1}Me_2)a_{0,k}e^{i(k+1)\theta_0} \\& \hspace{1in} +  b\cdot B \sum_{k \in \ZZ} (L(ikd\phi_2-id\phi_3)^{-1}Me_2)a_{0,k}e^{i(k-1)\theta_0}]\\
&= b\cdot B \sum_{k \in \ZZ} (L(ikd\phi_2+id\phi_3)^{-1}Me_2)(k+1)a_{0,k}e^{i(k+1)\theta_0} \\& \hspace{1in} +b\cdot B \sum_{k \in \ZZ} (L(ikd\phi_2-id\phi_3)^{-1}Me_2)(k-1)a_{0,k}e^{i(k-1)\theta_0}
\end{aligned}
\end{equation}
We need the following lemma whose proof is easy.
\begin{lem} \label{Rlem} \cite{W4}
For $(k,l) \in \mathcal{N}$ and for any $X \in \CC^3$ 
\begin{align}
&L^{-1}(kd\phi_2+ld\phi_3)X = c_1r_1 + c_2r_2 + c_3r_3 \text{, where } \\
&c_1 = \frac{\ell_1X}{k(\omega_2-\omega_1)+l(\omega_3-\omega_1)}\quad
c_2 = \frac{\ell_2X}{l(\omega_3-\omega_2)}\quad
c_3 = \frac{\ell_3X}{k(\omega_2-\omega_3)}
\end{align}
\end{lem}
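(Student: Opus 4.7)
The plan is to exploit the basis $\{r_1,r_2,r_3\}$ of $\CC^3$ (guaranteed by strict hyperbolicity, Assumption \ref{stricthyperbolicity}) together with the biorthogonal left eigenvectors $\ell_m$ normalized by $\ell_m B_2 r_{m'}=\delta_{mm'}$. Setting $Y := L^{-1}(kd\phi_2+ld\phi_3)X = c_1 r_1+c_2 r_2+c_3 r_3$, so that $X = L(kd\phi_2+ld\phi_3)Y$, and using the linearity of $L(\sigma,\eta,\xi)=\sigma I + B_1\eta + B_2\xi$ in its arguments, I would first reduce the computation to evaluating $L(d\phi_m)r_j$, and then read off each coefficient by testing against $\ell_m$.

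The central identity is
\[
L(d\phi_m)\,r_j \;=\; (\omega_m-\omega_j)\,B_2 r_j, \qquad m,j\in\{1,2,3\}.
\]
This follows from the eigenvalue relation $\mathcal{A}(\beta_l)r_j=\omega_j r_j$ with $\mathcal{A}(\beta_l)=-B_2^{-1}(\sigma_l I + B_1\eta_l)$: rewriting gives $(\sigma_l I + B_1\eta_l)r_j = -\omega_j B_2 r_j$, and since $d\phi_m=(\beta_l,\omega_m)$ we obtain $L(d\phi_m)r_j=(\sigma_l I + B_1\eta_l + \omega_m B_2)r_j = (\omega_m-\omega_j)B_2 r_j$. (As a sanity check, this recovers $L(d\phi_j)r_j=0$, matching $r_j\in\ker L(d\phi_j)$.)

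Combining the two steps yields
\[
X \;=\; \sum_{j=1}^3 c_j\bigl(k(\omega_2-\omega_j)+l(\omega_3-\omega_j)\bigr)B_2 r_j,
\]
and applying $\ell_m$ to both sides collapses the sum via $\ell_m B_2 r_j=\delta_{mj}$, giving
\[
c_m \;=\; \frac{\ell_m X}{k(\omega_2-\omega_m)+l(\omega_3-\omega_m)}.
\]
Specialising to $m=1,2,3$ reproduces the three stated formulas; the $k$-term drops for $m=2$ and the $l$-term drops for $m=3$, which is exactly the simplification in the lemma. This is essentially linear algebra; the only implicit input is that the denominators are nonzero for $(k,l)\in\mathcal{N}$, which for $m=2,3$ is immediate from the strict separation $\omega_1<\omega_2<\omega_3$ together with $k,l\neq 0$, and for $m=1$ is already built into the hypothesis that $L(kd\phi_2+ld\phi_3)$ is invertible on $\mathcal{N}$ (equivalently, the small-divisor Assumption \ref{sdassump}). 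There is no real obstacle — the work is in setting up the biorthogonal pairing correctly.
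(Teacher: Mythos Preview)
Your proof is correct and is precisely the natural argument the paper has in mind; the paper itself omits the proof entirely, noting only that it ``is easy.'' Your use of the identity $L(d\phi_m)r_j=(\omega_m-\omega_j)B_2r_j$ together with the biorthogonality $\ell_m B_2 r_{m'}=\delta_{mm'}$ is exactly what is needed, and your remark on the nonvanishing of the denominators is the right way to close the argument.
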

Using above lemma \ref{Rlem} and $b \cdot B r_p = 0$ for $p = 2,3$, we continue the string of equations in \eqref{eqn:uo2}:
\begin{equation}
	\begin{aligned}
	\eqref{eqn:uo2} &=  b\cdot B \sum_{k\in \ZZ} \frac{\ell_1Me_2(k+1)a_{0,k}e^{i(k+1)\theta_0}r_1}{k(\omega_2-\omega_1)+ (\omega_3-\omega_1)} +
	 b\cdot B \sum_{k\in \ZZ} \frac{\ell_1Me_2(k-1)a_{0,k}e^{i(k-1)\theta_0}r_1}{k(\omega_2-\omega_1)- (\omega_3-\omega_1)} 
	\\& = (b\cdot Br_1) (\ell_1Me_2)\{\sum_{k \in \ZZ} \frac{(k+1)a_{0,k}e^{i(k+1)\theta_0}}{k(\omega_2-\omega_1)+ (\omega_3-\omega_1)} + \sum_{k \in \ZZ} \frac{(k-1)a_{0,k}e^{i(k-1)\theta_0}}{k(\omega_2-\omega_1)- (\omega_3-\omega_1)}\}\\
	& = (b\cdot Br_1) (\ell_1Me_2)\{e^{i\theta_0}\sum_{k \in \ZZ} \frac{(k+1)a_{0,k}e^{ik\theta_0}}{k(\omega_2-\omega_1)+ (\omega_3-\omega_1)} + e^{-i\theta_0}\sum_{k \in \ZZ} \frac{(k-1)a_{0,k}e^{ik\theta_0}}{k(\omega_2-\omega_1)- (\omega_3-\omega_1)}\}
	\end{aligned}
\end{equation}
since $Br_p \in BE^s(\beta_l) \Rightarrow b\cdot Br_p = 0$ for $ p= 2,3$. We observe that the set $\{\frac{k \pm 1}{k(\omega_2-\omega_1)\pm (\omega_3-\omega_1)}, k\in \ZZ\}$ is bounded.

Now, the equation \ref{u01} takes the form:
\begin{equation}
X_{Lop}a_0(t,x_1,\theta_0) + (e^{i\theta_0}m_1(D_{\theta_0})+ e^{-i\theta_0}m_2(D_{\theta_0}))a_0(t,x_1,\theta_0) = h(t,x_1,\theta_0), \ h=0 \text{ in } t < 0, 
\end{equation}
where $m_i(D_{\theta_0})$ is the bounded Fourier multiplier can be read off from above calculation , and $h(t,x_1,\theta_0)$ is known. An argument based on energy estimate yields a unique solution $a_0(t,x_1,\theta_0) \in H^\infty((-\infty,T]\times \RR \times \TT)$ satisfying $a = 0$ in $t < 0$. \\

\textbf{The General Boundary Equations} \\

Now we analyze the general boundary equations for all $U_j$ that gives boundary conditions for all $\sigma_j^2 = a_j(t,x_1,\theta_2)e + \check{a}_j(t,x_1,\theta_2)\check{e},\  \sigma_j^3 = a_j(t,x_1,\theta_3)e + \check{a}_j(t,x_1,\theta_3)\check{e}$. \\
Applying $\D_{\theta_0}$ to both sides of \eqref{eqn:bdot}, we get a equation in the form.
\begin{equation} \label{eqn:2.42}
\D_{\theta_0} b\cdot (-BE_{P_1}U_{j+1}+B[RL(\D)U_j+f(\theta_3)MU_j])=h_j(t,x,\theta_0)
\end{equation}
At this point of constructing profiles, we have already determined $E_{P_1}U_{j} = 0, E_{P_1}U_{j+1} = 0$ and $(I-E_P)U_j$. We also note that from \ref{eqn:bucheck}, we know $\ccU$. We move all the known terms to the right and drop all the subscript for simpler notation. Recall that on the boundary we write $E_{P_{in}}U(t,x_1,0,\theta_0) = \sigma^2(t,x_1,0,\theta_0)r_2 + \sigma^3(t,x_1,0,\theta_0)r_3 = a(t,x_1,\theta_0)e + \check{a}(t,x_1,\theta_0)\check{e}$, equation \eqref{eqn:2.42}  becomes:
\begin{equation}
\D_{\theta_0} b\cdot BR[L(\D)(a(t,x_1,\theta_2)e_2 +a(t,x_1,\theta_3)e_3 )+f(\theta_3)M(a(t,x_1,\theta_2)e_2 +a(t,x_1,\theta_3)e_3 )] = h(t,x,\theta_0)
\end{equation} 
This equation is identical to \eqref{u01} that we've discussed with a different known right hand side. The same discussion yields $a(t,x_1,\theta_0) \in H^\infty ((-\infty,T]\times \RR \times \TT)$.\\

\newpage
\section{Bibliography}
\bibliographystyle{alpha}
\bibliography{bib}

\end{document}